\tikzstyle{vertex}=[circle,draw=black,fill=orange!40,minimum size=10pt,inner sep=0pt]
\tikzstyle{small vertex}=[circle,draw=black,fill=orange!40,minimum size=2pt,inner sep=2pt]
\tikzstyle{blue vertex}=[circle,draw=black,fill=blue!40,minimum size=2pt,inner sep=2pt]
\tikzstyle{odd vertex}=[draw=black,fill=orange!40,minimum size=10pt,inner sep=0pt]
\tikzstyle{even vertex} = [circle,draw=black,fill=red!40,minimum size=10pt,inner sep=0pt]
\tikzstyle{dual vertex}=[circle,draw=black,fill=yellow!25,minimum size=10pt,inner sep=0pt]
\tikzstyle{edge} = [draw,thick,-]
\tikzstyle{dashed edge} = [draw,thick,dashed,-]
\tikzstyle{oriented edge} = [draw,line width=1pt,->]
\tikzstyle{selected edge} = [draw,line width=3pt,->,red]
\tikzstyle{dual selected edge} = [draw,dashed,line width=3pt,->,green]
\tikzstyle{matched edge} = [draw,line width=4pt,-,blue]
\tikzstyle{other matched edge} = [draw,line width=4pt,-,red,dashed]
\tikzstyle{flip} = [draw,line width=2pt,<->]
\newtheorem{theorem}{Theorem}
\newtheorem{lemma}{Lemma}
\newtheorem{corollary}{Corollary}
\newtheorem{problem}{Problem}
\newtheorem{question}{Question}
\newcommand{\cI}{{\mathcal{I}}}
\newcommand{\cR}{{\mathcal{R}}}
\newcommand{\cO}{{\mathcal{O}}}
\newcommand{\cJ}{{\mathcal{J}}}
\providecommand{\cS}{{\mathcal{S}}} 
\renewcommand{\cS}{{\mathcal{S}}}   
\providecommand{\email}[1]{{\normalsize \tt {#1}}}  
\renewcommand{\email}[1]{{\normalsize \tt {#1}}}  
\DeclareMathOperator{\cov}{cov}
\DeclareMathOperator{\Int}{Int}
\DeclareMathOperator{\sgn}{sgn}
\definecolor{darkblue}{rgb}{0.1,0.1,0.7}
\let\emph\relax
\DeclareTextFontCommand{\emph}{\color{darkblue}}
\title{Flip distances between graph orientations}
\author{Oswin Aichholzer$^1$ \and Jean Cardinal$^2$ \and Tony Huynh$^2$\thanks{Supported by ERC Consolidator Grant 615640-ForEFront}  \and Kolja Knauer$^3$\thanks{Partially supported by ANR grants GATO: ANR-16-CE40-0009-01, DISTANCIA: ANR-17-CE40-0015, and CAPPS: ANR-17-CE40-0018} \and Torsten M\"utze$^4$ \and Raphael Steiner$^4$ \and Birgit Vogtenhuber$^1$\thanks{Partially supported by the Austrian Science Fund (FWF): I 3340-N35}}
\date{%
  $^1$TU Graz, Austria, \email{\{oaich,bvogt\}@ist.tugraz.at}\\%
  $^2$Universit\'e libre de Bruxelles (ULB), Belgium, \email{jcardin@ulb.ac.be, tony.bourbaki@gmail.com}\\%
  $^3$Universit\'e Aix-Marseille, France, \email{kolja.knauer@lis-lab.fr}\\%
  $^4$TU Berlin, Germany, \email{\{steiner,muetze\}@math.tu-berlin.de}\\
  \today
}
\begin{document}
\maketitle

\begin{abstract}
Flip graphs are a ubiquitous class of graphs, which encode relations induced on a set of combinatorial objects by elementary, local changes.
Skeletons of associahedra, for instance, are the graphs induced by quadrilateral flips in triangulations of a convex polygon.
For some definition of a flip graph, a natural computational problem to consider is the flip distance: Given two objects, what is
the minimum number of flips needed to transform one into the other?

We consider flip graphs on orientations of simple graphs, where flips consist of reversing the direction of some edges.
More precisely, we consider so-called $\alpha$-orientations of a graph $G$, in which every vertex $v$ has a specified outdegree $\alpha(v)$,
and a flip consists of reversing all edges of a directed cycle.
We prove that deciding whether the flip distance between two $\alpha$-orientations of a planar graph $G$ is at most two is \NP-complete.
This also holds in the special case of perfect matchings, where flips involve alternating cycles.
This problem amounts to finding geodesics on the common base polytope of two partition matroids, or, alternatively, on an alcoved polytope.
It therefore provides an interesting example of a flip distance question that is computationally intractable despite having a natural interpretation
as a geodesic on a nicely structured combinatorial polytope.

We also consider the dual question of the flip distance between graph orientations in which every cycle has a specified number of forward edges,
and a flip is the reversal of all edges in a minimal directed cut.
In general, the problem remains hard. However, if we restrict to flips that only change sinks into sources, or vice-versa, then
the problem can be solved in polynomial time. Here we exploit the fact that the flip graph is the cover graph of a distributive lattice.
This generalizes a recent result from Zhang, Qian, and Zhang (Acta. Math. Sin.-English Ser., 2019).
\end{abstract}

\section{Introduction}

The term \emph{flip} is commonly used in combinatorics to refer to an elementary, local, reversible operation that transforms one combinatorial object into another.
Such flip operations naturally yield a \emph{flip graph}, whose vertices are the considered combinatorial objects, and two of them are adjacent if they differ by a single flip.
A classical example is the flip graph of triangulations of a convex polygon~\cite{STT86,P14}; see Figure~\ref{fig:associahedron}.
The vertex set of this graph are all triangulations of the polygon, and two triangulations are adjacent if one can be obtained from the other by replacing the diagonal of a quadrilateral formed by two triangles by the other diagonal.
Similar flip graphs have also been investigated for triangulations of general point sets in the plane~\cite{DRS10}, triangulations of topological surfaces~\cite{N94}, and planar graphs~\cite{BH09,BV11}.
The \emph{flip distance} between two combinatorial objects is the minimum number of flips needed to transform one into the other.
It is known that computing the flip distance between two triangulations of a simple polygon~\cite{AMP15} or of a point set~\cite{LP15} is \NP-hard.
The latter is known to be fixed-parameter tractable~\cite{KSX17}.
On the other hand, the \NP-hardness of computing the flip distance between two triangulations of a convex polygon is a well-known open question~\cite{LZ98,R99,BP06,CJ09,CJ10,LEP10}.
Flip graphs involving other geometric configurations have also been studied, such as flip graphs of non-crossing perfect matchings of a point set in the plane, where flips are with respect to alternating 4-cycles~\cite{HHN02}, or alternating cycles of arbitrary length~\cite{HHNR05}.
Other flip graphs include the flip graph on plane spanning trees~\cite{AAHV07}, the flip graph of non-crossing partitions of a point set or dissections of a polygon~\cite{HHNO09}, the mutation graph of simple pseudoline arrangements~\cite{Rin-57}, the Eulerian tour graph of a Eulerian graph~\cite{ZG87}, and many others.
There is also a vast collection of interesting flip graphs for non-geometric objects, such as bitstrings, permutations, combinations, and partitions~\cite{FKMS18}.

\begin{figure}
\begin{center}
\includegraphics{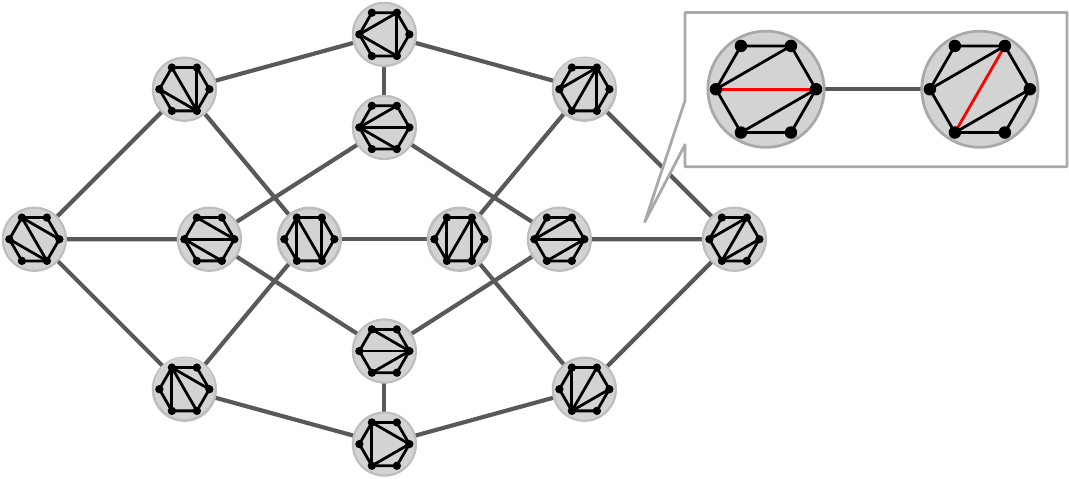}
\end{center}
\caption{The flip graph of triangulations of a convex polygon.}
\label{fig:associahedron}
\end{figure}

In essence, a flip graph provides the considered family of combinatorial objects with an underlying structure that reveals interesting properties about the objects.
It can also be a useful tool for proving that a property holds for all objects, by proving that one particularly nice object has the property, and that the property is preserved under flips.
Flip graphs are also an essential tool for solving fundamental algorithmic tasks such as random and exhaustive generation, see e.g.~\cite{AF96} and~\cite{PW98}.

The focus of the present paper is on flip graphs for orientations of graphs satisfying some constraints.
First, we consider so-called $\alpha$-orientations, in which the outdegree of every vertex is specified by a function $\alpha$, and the flip operation consists of reversing the orientation of all edges in a directed cycle.
We study the complexity of computing the flip distance between two such orientations.
An interesting special case of $\alpha$-orientations corresponds to perfect matchings in bipartite graphs, where flips involve alternating cycles.
We also consider the dual notion of $c$-orientations, in which the number of forward edges along each cycle is specified by a function $c$.
Here a flip consists of reversing all edges in a directed cut.
We also analyze the computational complexity of the flip distance problem in $c$-orientations.

There are several deep connections between flip graphs and polytopes.
Specifically, many interesting flip graphs arise as the ($1$-)skeleton of a polytope.
For instance, flip graphs of triangulations of a convex polygon are skeletons of associahedra~\cite{CSZ15}, and flip graphs of regular triangulations of a point set in the plane are skeletons of secondary polytopes (see~\cite[Chapter~5]{DRS10}).
Associahedra are generalized by \emph{quotientopes}~\cite{PS18}, whose skeletons yield flip graphs on rectangulations~\cite{CSS18}, bitstrings, permutations, and other combinatorial objects.
Moreover, flip graphs of acyclic orientations or strongly connected orientations of a graph are skeletons of graphical and co-graphical zonotopes, respectively~(see~\cite[Section~2]{P09}).
Similarly, as we show below, flip graphs on $\alpha$-orientations are skeletons of matroid intersection polytopes.
We also consider vertex flips in $c$-orientations, inducing flip graphs that are distributive lattices and in particular subgraphs of skeletons of certain distributive polytopes.
These polytopes specialize to flip polytopes of planar $\alpha$-orientations, are generalized by the polytope of tensions of a digraph, and form part of the family of \emph{alcoved polytopes}~(see~\cite{KF11}).

In the next section, we give the precise statements of the computational problems we consider, connections with previous work, and the statements of our main results.
In Section~\ref{sec:pm}, we give the proof of our first main result, showing that computing the flip distance between $\alpha$-orientations and between perfect matchings is \NP-hard even for planar graphs.
Section~\ref{sec:lattice} presents the proof of our second main result, where we give a polynomial time algorithm to compute the vertex flip distance between $c$-orientations. Finally, in Section~\ref{sec:jump} we show that computing the distance between $c$-orientations, when double vertex flips are also allowed, is \NP-hard.

\section{Problems and main results}

\paragraph{Flip distance between $\alpha$-orientations.}

Given a graph $G$ and some $\alpha:V(G) \rightarrow \mathbb{N}_0$, an \emph{$\alpha$-orientation} of $G$ is an orientation of the edges of $G$ in which every vertex $v$ has outdegree $\alpha (v)$.
An example for a graph and two $\alpha$-orientations for this graph is given in Figure~\ref{fig:flip}.
A flip of a directed cycle $C$ in some $\alpha$-orientation $X$ consists of the reversal of the orientation of all edges of $C$, as shown in the figure.
Edges with distinct orientations in two given $\alpha$-orientations $X$ and $Y$ induce a Eulerian subdigraph of both $X$ and $Y$.
They can therefore be partitioned into an edge-disjoint union of cycles in $G$ which are directed in both $X$ and $Y$.
Hence the reversal of each such cycle in $X$ gives rise to a flip sequence transforming $X$ into $Y$ and vice versa.
We may thus define the \emph{flip distance} between two $\alpha$-orientations $X$ and $Y$ to be the minimum number of cycles in a flip sequence transforming $X$ into $Y$.
We are interested in the computational complexity of determining the flip distance between two given $\alpha$-orientations.

\begin{figure}
\begin{center}
\begin{tikzpicture}[scale=.5, auto,swap]
    \begin{scope}[xshift=0cm]
    \foreach \pos/\name in {{(6,9)/a}, {(10,7)/d}, {(4,4.5)/e}, {(6,7)/f}, {(8,4.5)/k}}
        \node[vertex] (\name) at \pos {$1$};
    \foreach \pos/\name/\outdeg in {{(8,9)/b/1}, {(4,7)/c/2}, {(8,7)/g/2}, {(10,4.5)/h/1}, {(6,4.5)/j/2}}
        \node[vertex] (\name) at \pos {$\outdeg$};
    \foreach \source/\dest in {a/b,b/d,c/a,g/d,j/f,d/h,h/k}
        \path[oriented edge] (\source) -- (\dest);
    \foreach \source/\dest in {e/c,c/f,f/g,g/k,k/j,j/e}
        \path[selected edge] (\source) -- (\dest);
    \end{scope}
    \begin{scope}[xshift=11.5cm,yshift=5.5cm]
    \path[flip] (0,0) -- (5,0);
    \end{scope}
    \begin{scope}[xshift=14.5cm]
    \foreach \pos/\name in {{(6,9)/a}, {(10,7)/d}, {(4,4.5)/e}, {(6,7)/f}, {(8,4.5)/k}}
        \node[vertex] (\name) at \pos {$1$};
    \foreach \pos/\name/\outdeg in {{(8,9)/b/1}, {(4,7)/c/2}, {(8,7)/g/2}, {(10,4.5)/h/1}, {(6,4.5)/j/2}}
        \node[vertex] (\name) at \pos {$\outdeg$};
    \foreach \source/\dest in {a/b,b/d,c/a,g/d,j/f,d/h,h/k}
        \path[oriented edge] (\source) -- (\dest);
    \foreach \dest / \source in {e/c,c/f,f/g,g/k,k/j,j/e}
        \path[selected edge] (\source) -- (\dest);
    \end{scope}
\end{tikzpicture}
\end{center}
\caption{Two $\alpha$-orientations of a graph and a flip between them, where the values of $\alpha$ are depicted on the vertices.}
\label{fig:flip}
\end{figure}
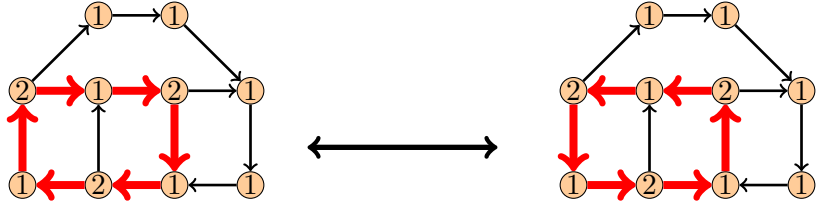

\begin{problem}
\label{pb:alphadist}
Given a graph $G$, some $\alpha:V(G) \rightarrow \mathbb{N}_0$, a pair $X,Y$ of $\alpha$-orientations of $G$ and an integer $k \ge 0$, decide whether the flip distance between $X$ and $Y$ is at most $k$.
\end{problem}

The crucial difficulty of this problem is that a shortest flip sequence transforming $X$ into $Y$ may flip edges that are oriented the same in $X$ and $Y$ an even number of times, to reach $Y$ with fewer flips compared to only flipping edges that are oriented differently in $X$ and $Y$; see the example in Figure~\ref{fig:XY}.
This motivates the following variant of the previous problem:

\begin{problem}
\label{pb:alphadistdiff}
Given $G,\alpha,X,Y,k$ as in Problem~\ref{pb:alphadist}, decide whether the flip distance between $X$ and $Y$ is at most $k$, where we only allow flipping edges that are oriented differently in $X$ and $Y$.
\end{problem}

\begin{figure}
\begin{center}
\begin{tikzpicture}[scale=.8, auto,swap]
    \begin{scope}[xshift=0cm]
    \foreach \pos/\name/\outdeg in {{(-1,0)/a/2}, {(1,0)/b/1}, {(2,1)/c/1}, {(2,-1)/d/1}, {(3,0)/e/2}, {(4,0)/f/1}, {(5,1)/g/1}, {(5,-1)/h/1}, {(6,0)/i/2}, {(7,0)/j/1}, {(8,1)/k/1}, {(8,-1)/l/1}, {(9,0)/m/2}, {(10,0)/n/1}, {(11,1)/o/1}, {(11,-1)/p/1}, {(12,0)/q/2}, {(14,0)/r/1}, {(6.5,2.5)/s/1}, {(6.5,-2.5)/t/1}}
        \node[vertex] (\name) at \pos {$\outdeg$};
    \foreach \source/\dest in {a/b,e/f,i/j,m/n,q/r}
        \path[oriented edge] (\source) -- (\dest);
    \foreach \dest / \source in {b/c,c/e,e/d,d/b,f/g,g/i,i/h,h/f,j/k,k/m,m/l,l/j,n/o,o/q,q/p,p/n}
        \path[selected edge] (\dest) -- (\source);
    \foreach \source/\dest in {r/s,s/a,a/t,t/r}
        \path[oriented edge] (\source) edge[bend right=18] node [left] {} (\dest);
    \node at (2,0) {$C_1$};
    \node at (5,0) {$C_2$};
    \node at (8,0) {$C_3$};
    \node at (11,0) {$C_4$};
    \node at (6.5,1.5) {$D_1$};
    \node at (6.5,-1.5) {$D_2$};
    \node at (1,2) {$D_3$};
    \end{scope}
\end{tikzpicture}
\end{center}
\caption{An $\alpha$-orientation $X$ of a graph.
The $\alpha$-orientation $Y$ obtained by flipping the four directed facial cycles $C_1,\ldots,C_4$ can be reached with fewer flips by flipping only the three directed facial cycles $D_1,D_2,D_3$ in this order.}
\label{fig:XY}
\end{figure}
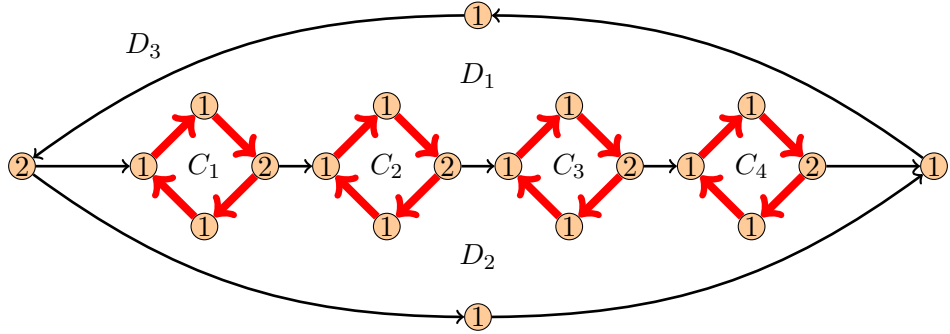

\paragraph{From $\alpha$-orientations to perfect matchings.}

The flexibility in choosing a function $\alpha$ for a set of $\alpha$-orientations on a graph allows us to capture numerous relevant combinatorial structures, some of which are listed below:
\begin{itemize}[leftmargin=4mm, noitemsep, topsep=3pt plus 3pt]
\item domino and lozenge tilings of a plane region \cite{Rem04,Thu90},
\item planar spanning trees \cite{GL86},
\item (planar) bipartite perfect matchings \cite{LZ03},
\item (planar) bipartite $d$-factors \cite{P02,F04},
\item Schnyder woods of a planar triangulation \cite{Bre00},
\item Eulerian orientations of a (planar) graph \cite{F04},
\item $k$-fractional orientations of a planar graph with specified outdegrees \cite{BF12},
\item contact representations of planar graphs with homothetic triangles, rectangles, and $k$-gons \cite{Fel13,GLP12,FSS18a,FSS18b}.
\end{itemize}

In the following, we focus on perfect matchings of bipartite graphs. Consider any bipartite graph $G$ with bipartition $(V_1,V_2)$ equipped with

$$\alpha:V(G) \rightarrow \mathbb{N}_0, \quad\alpha(x):=\begin{cases} 1 & \text{ if }  x \in V_1, \cr d_G(x)-1 & \text{ if } x \in V_2. \end{cases}$$

With this definition, in each $\alpha$-orientation of $G$, the edges directed from $V_1$ to $V_2$ form a perfect matching.
This is illustrated in Figure~\ref{fig:match}.
Conversely, given a perfect matching $M$ of $G$, orienting all edges of $M$ from $V_1$ to $V_2$ and all the other edges from $V_2$ to $V_1$ yields an $\alpha$-orientation of the above type.
Furthermore, the directed cycles in any $\alpha$-orientation of $G$ correspond to the alternating cycles in the associated perfect matching.
Flipping an alternating cycle in a perfect matching corresponds to exchanging matching-edges and non-matching-edges.
An example of the flip graph of perfect matchings of a graph is given in Figure~\ref{fig:flipgraph}.
In this special case, Problem~\ref{pb:alphadist} boils down to:

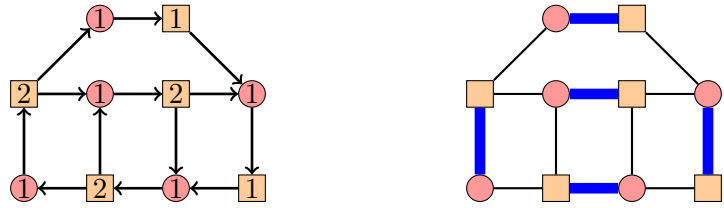
\begin{figure}
\begin{center}
\begin{tikzpicture}[scale=.5, auto,swap]
    \begin{scope}[xshift=0cm]
    \foreach \pos/\name in {{(6,9)/a}, {(10,7)/d}, {(4,4.5)/e}, {(6,7)/f}, {(8,4.5)/k}}
        \node[even vertex] (\name) at \pos {$1$};
    \foreach \pos/\name/\outdeg in {{(8,9)/b/1}, {(4,7)/c/2}, {(8,7)/g/2}, {(10,4.5)/h/1}, {(6,4.5)/j/2}}
        \node[odd vertex] (\name) at \pos {$\outdeg$};
    \foreach \source/\dest in {a/b,b/d,c/a,c/f,f/g,g/d,j/f,k/j,g/k,e/c,d/h,j/e,h/k}
        \path[oriented edge] (\source) -- (\dest);
    \end{scope}
    \begin{scope}[xshift=12cm]
    \foreach \pos/\name in {{(6,9)/a}, {(10,7)/d}, {(4,4.5)/e}, {(6,7)/f}, {(8,4.5)/k}}
        \node[even vertex] (\name) at \pos {};
    \foreach \pos/\name/\outdeg in {{(8,9)/b/1}, {(4,7)/c/2}, {(8,7)/g/2}, {(10,4.5)/h/1}, {(6,4.5)/j/2}}
        \node[odd vertex] (\name) at \pos {};
    \foreach \source/\dest in {a/b,b/d,a/c,c/f,f/g,g/d,f/j,j/k,k/g,c/e,h/d,j/e,h/k}
        \path[edge] (\source) -- (\dest);
    \foreach \source/\dest in {a/b,f/g,k/j,e/c,d/h}
        \path[matched edge] (\source) -- (\dest);
    \end{scope}
\end{tikzpicture}
\end{center}
\caption{An $\alpha$-orientation of a bipartite graph and the corresponding perfect matching.}
\label{fig:match}
\end{figure}

\begin{problem}
\label{pb:bipmatchingdist}
Given a bipartite graph $G$, a pair $X,Y$ of perfect matchings in $G$ and an integer $k \ge 0$, decide whether the flip distance between $X$ and $Y$ is at most $k$.
\end{problem}

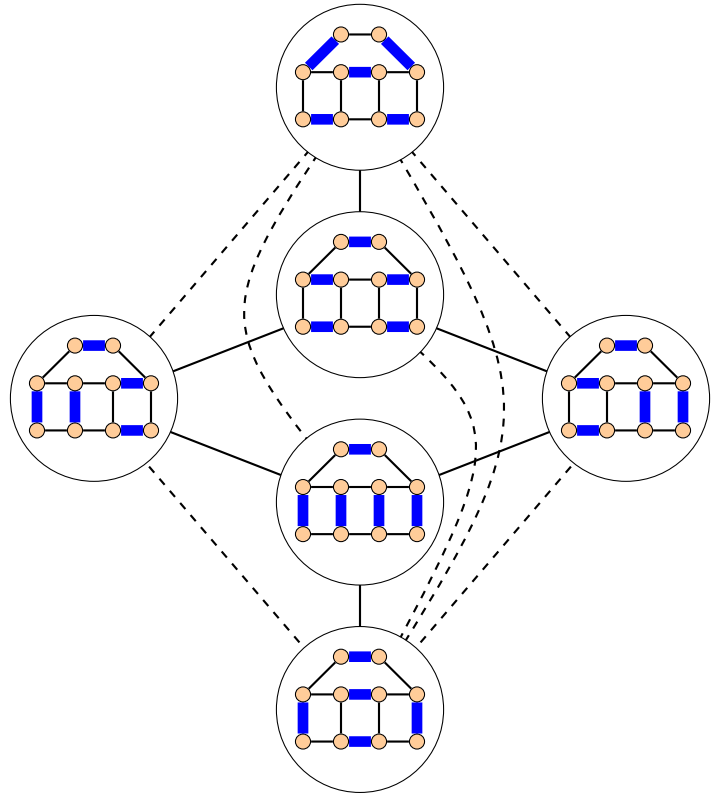
\begin{figure}
\begin{center}
\newcommand{\basegraph}
{
    \draw[fill=white] (7,6.2) circle (4.4cm);
    \foreach \pos/\name in {{(6,9)/a}, {(10,7)/d}, {(4,4.5)/e}, {(6,7)/f}, {(8,4.5)/k}}
        \node[small vertex] (\name) at \pos {};
    \foreach \pos/\name/\outdeg in {{(8,9)/b/1}, {(4,7)/c/2}, {(8,7)/g/2}, {(10,4.5)/h/1}, {(6,4.5)/j/2}}
        \node[small vertex] (\name) at \pos {};
    \foreach \source/\dest in {a/b,b/d,a/c,c/f,f/g,g/d,f/j,j/k,k/g,c/e,h/d,k/h,j/e}
        \path[edge] (\source) -- (\dest);
}
\begin{tikzpicture}[scale=.25, auto,swap]
    \foreach \pos/\name in {{(12,2)/a}, {(12,13)/b}, {(12,24)/c}, {(12,35)/d}, {(-2,18.5)/e}, {(26,18.5)/f}}
        \node (\name) at \pos {};
    \foreach \source/\dest in {a/b,c/d,b/e,b/f,c/e,c/f}
        \path[edge] (\source) -- (\dest);
    \foreach \source/\dest in {a/e,a/f,d/e,d/f}
        \path[dashed edge] (\source) -- (\dest);
    \path[dashed edge] (d) .. controls (22,18) .. (a);
    \path[dashed edge] (d) .. controls (4,23) .. (b);
    \path[dashed edge] (c) .. controls (20,17) .. (a);
    \begin{scope}[xshift=5cm,yshift=-4cm]
    \basegraph
        \foreach \source/\dest in {a/b,f/g,c/e,d/h,j/k}
        \path[matched edge] (\source) -- (\dest);
    \end{scope}
    \begin{scope}[xshift=5cm,yshift=7cm]
    \basegraph
        \foreach \source/\dest in {a/b,d/h,e/c,f/j,g/k}
        \path[matched edge] (\source) -- (\dest);
    \end{scope}
    \begin{scope}[xshift=5cm,yshift=18cm]
    \basegraph
        \foreach \source/\dest in {c/f,g/d,k/h,a/b,j/e}
        \path[matched edge] (\source) -- (\dest);
    \end{scope}
    \begin{scope}[xshift=5cm,yshift=29cm]
    \basegraph
        \foreach \source/\dest in {a/c,b/d,f/g,j/e,h/k}
        \path[matched edge] (\source) -- (\dest);
    \end{scope}
    \begin{scope}[xshift=-9cm,yshift=12.5cm]
    \basegraph
        \foreach \source/\dest in {a/b,e/c,f/j,g/d,k/h}
        \path[matched edge] (\source) -- (\dest);
    \end{scope}
    \begin{scope}[xshift=19cm,yshift=12.5cm]
    \basegraph
        \foreach \source/\dest in {a/b,c/f,j/e,g/k,d/h}
        \path[matched edge] (\source) -- (\dest);
    \end{scope}
\end{tikzpicture}
\end{center}
\caption{The flip graph of perfect matchings of a graph. The solid edges indicate flips along facial cycles, and the dashed edges indicate flips along non-facial cycles.}
\label{fig:flipgraph}
\end{figure}

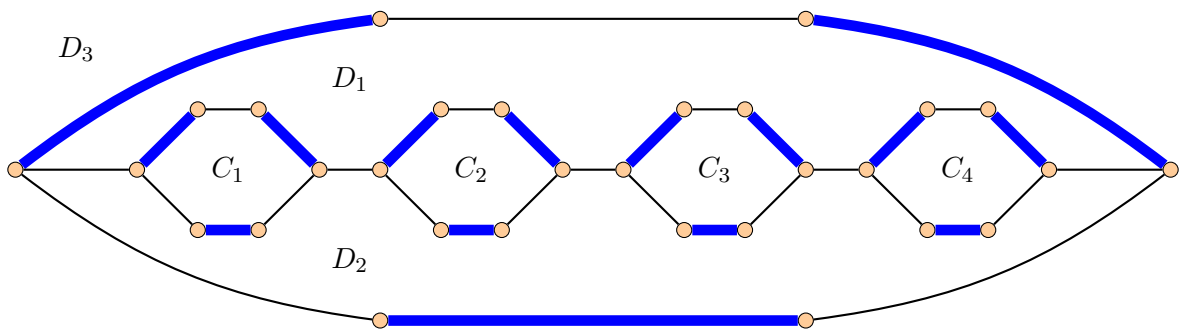
\begin{figure}
\begin{center}
\begin{tikzpicture}[scale=.8, auto,swap]
    \begin{scope}[xshift=0cm]
    \foreach \pos/\name in {{(0,0)/a}, {(19,0)/b}, {(6,2.5)/c}, {(13,2.5)/e}, {(6,-2.5)/d}, {(13,-2.5)/f}, {(2,0)/a1}, {(3,1)/a2}, {(3,-1)/a6}, {(4,1)/a3}, {(4,-1)/a5}, {(5,0)/a4}, {(6,0)/b1}, {(7,1)/b2}, {(7,-1)/b6}, {(8,1)/b3}, {(8,-1)/b5}, {(9,0)/b4}, {(10,0)/c1}, {(11,1)/c2}, {(11,-1)/c6}, {(12,1)/c3}, {(12,-1)/c5}, {(13,0)/c4}, {(14,0)/d1}, {(15,1)/d2}, {(15,-1)/d6}, {(16,1)/d3}, {(16,-1)/d5}, {(17,0)/d4}}
        \node[small vertex] (\name) at \pos {};
    \foreach \source/\dest in {a1/a2,a3/a4,a5/a6,b1/b2,b3/b4,b5/b6,c1/c2,c3/c4,c5/c6,d1/d2,d3/d4,d5/d6,d/f}
        \path[matched edge] (\source) -- (\dest);
    \foreach \source/\dest in {c/a,b/e}
        \path[matched edge] (\source) edge[bend right=15] node [left] {} (\dest);
    \foreach \source/\dest in {a2/a3,a4/a5,a1/a6,b2/b3,b4/b5,b1/b6,c2/c3,c4/c5,c1/c6,d2/d3,d4/d5,d1/d6,c/e,a/a1,a4/b1,b4/c1,c4/d1,b/d4}
        \path[edge] (\source) -- (\dest);
    \foreach \source/\dest in {a/d,f/b}
        \path[edge] (\source) edge[bend right=15] node [left] {} (\dest);
    \node at (3.5,0) {$C_1$};
    \node at (7.5,0) {$C_2$};
    \node at (11.5,0) {$C_3$};
    \node at (15.5,0) {$C_4$};
    \node at (5.5,1.5) {$D_1$};
    \node at (5.5,-1.5) {$D_2$};
    \node at (1,2) {$D_3$};
    \end{scope}
\end{tikzpicture}
\end{center}
\caption{A perfect matching $X$ in a graph.
The perfect matching $Y$ obtained by flipping the four alternating facial cycles $C_1,\ldots,C_4$ can be reached with fewer flips by flipping only the three alternating facial cycles $D_1,D_2,D_3$ in this order.}
\label{fig:M1M2}
\end{figure}

The example from Figure~\ref{fig:XY} can be easily modified to show that when transforming $X$ into $Y$ using the fewest number of flips, we may have to flip alternating cycles that are not in the symmetric difference of $X$ and $Y$; see the example in Figure~\ref{fig:M1M2}.
If we restrict the flips to only use cycles in the symmetric difference of $X$ and $Y$, then the problem of finding the flip distance becomes trivial, as the symmetric difference is a collection of disjoint cycles, and each of them has to be flipped, so Problem~\ref{pb:alphadistdiff} is trivial for perfect matchings.

\paragraph{Flip graphs and matroid intersection polytopes.}

We proceed to give a geometric interpretation of the flip distance between $\alpha$-orientations as the distance in the skeleton of a 0/1-polytope.

Recall that a \emph{matroid} is an abstract simplicial complex $(E,\cI)$, where $\cI\subseteq 2^E$ satisfies the independent set augmentation property.
The elements of $\cI$ are called \emph{independent sets}.
A \emph{base} of the matroid is an inclusionwise maximal independent set.

It is well-known that perfect matchings in a bipartite graph $G=(V_1\cup V_2,E)$ are common bases of two partition matroids $(E,\cI_1)$ and $(E,\cI_2)$, in which a set of edges is independent if no two share an endpoint in $V_1$, or, respectively, in $V_2$.

Similarly, $\alpha$-orientations can be defined as common bases of two partition matroids.
In this case, every edge of the graph $G$ is replaced by a pair of parallel arcs, one for each possible orientation of the edge.
One matroid encodes the constraint that for every edge exactly one orientation is chosen.
The second matroid encodes the constraint that each vertex $v$ has exactly $\alpha (v)$ outgoing arcs.

The \emph{common base polytope} of two matroids is a 0/1-polytope obtained as the convex hull of the characteristic vectors of the common bases.
Adjacency of two vertices of this polytope has been characterized by Frank and Tardos~\cite{FT88}.
A shorter proof was given by Iwata~\cite{I02}.
We briefly recall their result in the next theorem.
To state the theorem, consider a matroid $M=(E,\cI)$, a base $B\in\cI$, and a subset $F\subseteq E$.
The \emph{exchangeability graph} $G(B,F)$ of $M$ is a bipartite graph with $B\setminus F$ and $F\setminus B$ as vertex bipartition, and edge set $\{ ij \mid B\setminus\{i\}\cup\{j\} \text{ is a basis}\}$.
This definition and the theorem are illustrated in Figure~\ref{fig:exchange} for the two partition matroids whose common bases are perfect matchings of a graph.

\begin{theorem}[\cite{FT88,I02}]
\label{thm:polytope}
For two matroids $M^+ = (E,\cI^+)$ and $M^- = (E,\cI^-)$, two common bases $A,\!B\in \cI^+\cap\cI^-$ are adjacent on the common base polytope if and only if all the following conditions hold:
\begin{enumerate}[label=(\roman*),leftmargin=8mm, noitemsep, topsep=3pt plus 3pt]
\item the exchangeability graph $G(A,B)$ of $M^+$ has a unique perfect matching $P^+$,
\item the exchangeability graph $G(B,A)$ of $M^-$ has a unique perfect matching $P^-$,
\item $P^+ \cup P^-$ is a single cycle.
\end{enumerate}
\end{theorem}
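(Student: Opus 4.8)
The plan is to interpret adjacency on the common base polytope $\mathcal Q:=\mathrm{conv}\{\chi_C : C \text{ is a common base}\}$ via the standard criterion for $0/1$-polytopes: two vertices $u\neq v$ are adjacent if and only if $\tfrac12(u+v)$ is \emph{not} a convex combination of the remaining vertices. If $\tfrac12(\chi_A+\chi_B)=\sum_i\lambda_i\chi_{C_i}$ is such a combination, then on each coordinate where $\chi_A$ and $\chi_B$ agree so does every $\chi_{C_i}$, hence each common base $C_i$ occurring there satisfies $A\cap B\subseteq C_i\subseteq A\cup B$. I will reduce the theorem to the claim that $A,B$ are non-adjacent if and only if there is a \emph{splitting base}, i.e.\ a common base $C\notin\{A,B\}$ with $A\cap B\subseteq C\subseteq A\cup B$ whose reflection $D$, defined by $\chi_D:=\chi_A+\chi_B-\chi_C$, is again a common base. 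One implication is immediate, since then $[\chi_A,\chi_B]$ and $[\chi_C,\chi_D]$ are distinct chords of $\mathcal Q$ through the common point $\tfrac12(\chi_A+\chi_B)$. The converse requires compressing an arbitrary convex witness of non-adjacency into a two-term one; here one exploits that every proper face of a matroid-intersection polytope is again the common base polytope of two matroids (an uncrossing argument), which fails for generic $0/1$-polytopes. It is convenient, but not necessary, to first contract $A\cap B$ and restrict $M^+,M^-$ to $A\triangle B$, which changes neither the exchange graphs nor conditions (i)--(iii) and lets us assume $A\cap B=\emptyset$.

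I will then invoke two classical facts about a matroid exchange graph $G(\cdot,\cdot)$: (E1) for any two bases $X,Y$, $G(X,Y)$ has a perfect matching; and (E2) if $G(X,Y)$ restricted to equal-size subsets of its two sides has a \emph{unique} perfect matching, the corresponding exchange produces a base. Suppose a splitting base $C$ exists, and write $A\setminus B=P_1\sqcup P_2$, $B\setminus A=Q_1\sqcup Q_2$ with $C=(A\cap B)\cup P_2\cup Q_2$; then $|P_1|=|Q_2|\ge1$ and $|P_2|=|Q_1|\ge1$. Applying (E1) in $M^+$ to the base pairs $(A,C)$ and $(A,D)$ yields perfect matchings of $G(A,B)$ on $P_1\cup Q_2$ and on $P_2\cup Q_1$, whose union is a perfect matching $N^+$ of $G(A,B)$ with no edge between these two blocks; symmetrically, $(B,C)$ and $(B,D)$ give such an $N^-$ in $G(B,A)$. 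Hence $N^+\cup N^-$ is disconnected. In particular, if (i) and (ii) hold then $N^+=P^+$, $N^-=P^-$, so $P^+\cup P^-$ is disconnected and (iii) fails; equivalently, (i)$\wedge$(ii)$\wedge$(iii) forbids a splitting base, whence $A$ and $B$ are adjacent.

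For the converse I show that if some condition fails, a splitting base exists. If (iii) fails, pick a cycle $Z$ of the $2$-regular bipartite graph $P^+\cup P^-$; it alternates between $A\setminus B$ and $B\setminus A$ and spans a proper subset, and since $P^+$ and $P^-$ are the unique perfect matchings of their exchange graphs, the parts of $P^+$ and $P^-$ inside $Z$ are the unique perfect matchings of the subgraphs they span, so (E2) certifies that the exchanges along $Z$ and along its complement each produce common bases --- the required $C$ and $D$. The case that (i) or (ii) fails, so that some exchange graph has two distinct perfect matchings, is the delicate one: an alternating cycle alone does not certify a base, because (E2) needs uniqueness and an exchange valid in one matroid need not stay independent in the other; handling it is what I expect to be the main obstacle. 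A clean route is to prove the full equivalence between adjacency of $A,B$ and conditions (i)--(iii) through the dual description of an edge, by exhibiting a weight vector $w\colon E\to\mathbb R$ whose optimum over $\mathcal Q$ is attained exactly on $[\chi_A,\chi_B]$ --- one builds $w$ by ordering $A\triangle B$ along the single cycle $P^+\cup P^-$ and assigning alternating weights, and recovers (i)--(iii) from the existence of such a $w$. Assembling these implications proves the theorem.
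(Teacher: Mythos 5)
The theorem you are asked to prove is not actually proven in the paper: it is stated with citations to Frank--Tardos and Iwata, and the surrounding text explicitly says ``We briefly recall their result.'' There is therefore no in-paper proof to compare against, and I evaluate your attempt on its own.

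Your outline is a reasonable strategy, and the pieces you actually carry out are correct. In particular: the computation that a splitting base $C$ (with reflection $D$) forces the perfect matchings of $G(A,C),G(A,D)$ in $M^+$ to glue into a perfect matching $N^+$ of $G(A,B)$ with no edge between the blocks $P_1\cup Q_2$ and $P_2\cup Q_1$, and likewise $N^-$ in $G(B,A)$, so that $N^+\cup N^-$ is disconnected and hence (i)$\wedge$(ii)$\wedge$(iii) cannot all hold; and the argument that if (iii) fails, a proper cycle of $P^+\cup P^-$ inherits uniqueness of its matchings by the ``completion'' trick, so the unique-matching lemma (E2) certifies common bases $C$ and $D$. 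Both of these are sound.

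However, there are two genuine gaps, both of which you flag yourself. First, the reduction of non-adjacency to the existence of a splitting base (``compressing an arbitrary convex witness into a two-term one'') is a real structural claim about common base polytopes of two matroids, not a general $0/1$-polytope fact; you invoke an uncrossing/face argument but do not carry it out, and without it the implication ``(i)$\wedge$(ii)$\wedge$(iii) $\Rightarrow$ no splitting base $\Rightarrow$ adjacent'' has no foundation. Second, the case where (i) or (ii) fails is explicitly left unresolved (``handling it is what I expect to be the main obstacle''): from two distinct perfect matchings of, say, $G(A,B)$ you get an alternating cycle, but exchanging along it need not give a base of $M^+$ (uniqueness is exactly what (E2) requires), let alone stay independent in $M^-$, so you cannot conclude that a splitting base exists. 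The proposed fallback via an explicit weight vector certifying the edge $[\chi_A,\chi_B]$ is the right kind of idea but is only sketched; it would need to be turned into a full LP-duality argument. As written, this is an outline with the two hardest steps missing, not a proof.
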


From this theorem we can conclude that the flip graphs we consider on perfect matchings and $\alpha$-orientations are precisely the skeletons of the corresponding polytopes of common bases.

It is interesting to compare Problems~\ref{pb:alphadist} and~\ref{pb:bipmatchingdist} with the analogous problems for other families of matroid polytopes.
For instance, it is known that for two bases $A,B$ of a matroid, the exchangeability graph $G(A,B)$ has a perfect matching~\cite{B69}.
Hence $A$ can be transformed into $B$ by performing $|A\Delta B|/2$ exchanges of elements (where $A\Delta B$ is the symmetric difference of $A$ and $B$), which is also the distance in the skeleton of the base polytope of the matroid.
On the other hand, the problem of computing the flip distance between two triangulations of a convex polygon amounts to computing distances in skeletons of associahedra, which are known to be polymatroids (see~\cite{AA17} and references therein).
This problem is neither known to be in \P\ nor known to be \NP-hard.
Also note that for other families of combinatorial polytopes, testing adjacency is already intractable.
This is the case for instance for the polytope of the Traveling Salesman Problem (TSP)~\cite{P78}, whose skeleton is known to have diameter at most 4~\cite{RC98}.
On the other hand, the corresponding polytope is known to be the common base polytope of three matroids.

Another important class of combinatorial polytopes are \emph{alcoved polytopes}, see~\cite{LP07}. It is known that the flip graphs of planar $\alpha$-orientations are skeletons of alcoved polytopes, see~\cite{KF11}. Thus, by our results below, flip distances in this class are also \NP-hard to compute.

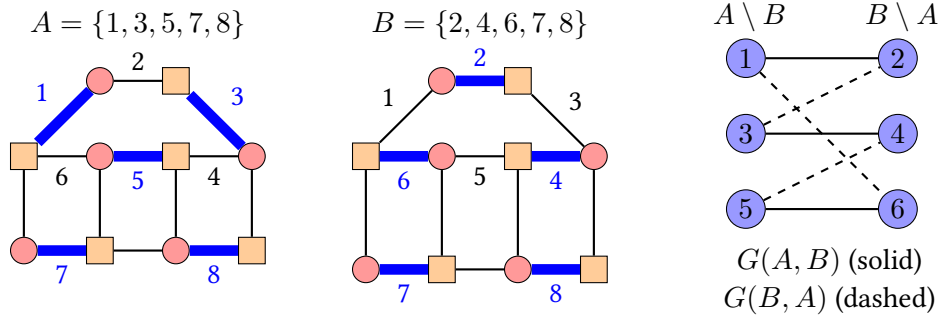
\begin{figure}
\begin{center}
\begin{tikzpicture}[scale=.5, auto,swap]
    \begin{scope}
    \node at (7,10.5) {$A=\{1,3,5,7,8\}$};
    \foreach \pos/\name in {{(6,9)/a}, {(10,7)/d}, {(4,4.5)/e}, {(6,7)/f}, {(8,4.5)/k}}
        \node[even vertex] (\name) at \pos {};
    \foreach \pos/\name/\outdeg in {{(8,9)/b/1}, {(4,7)/c/2}, {(8,7)/g/2}, {(10,4.5)/h/1}, {(6,4.5)/j/2}}
        \node[odd vertex] (\name) at \pos {};
    \foreach \source/\dest in {c/e,f/j,g/k,d/h,j/k}
        \path[edge] (\source) -- (\dest);
    \foreach \source/\dest/\lbl in {b/a/2,g/d/4,c/f/6}
        \path[edge] (\source) edge node {\lbl} (\dest);
    \foreach \source/\dest/\lbl in {a/c/1,d/b/3,f/g/5,e/j/7,k/h/8}
        \path[matched edge] (\source) edge node {\lbl} (\dest);
    \end{scope}
    \begin{scope}[xshift=9cm]
    \node at (7,10.5) {$B=\{2,4,6,7,8\}$};
    \foreach \pos/\name in {{(6,9)/a}, {(10,7)/d}, {(4,4)/e}, {(6,7)/f}, {(8,4)/k}}
        \node[even vertex] (\name) at \pos {};
    \foreach \pos/\name/\outdeg in {{(8,9)/b/1}, {(4,7)/c/2}, {(8,7)/g/2}, {(10,4)/h/1}, {(6,4)/j/2}}
        \node[odd vertex] (\name) at \pos {};
    \foreach \source/\dest in {c/e,f/j,g/k,d/h,j/k}
        \path[edge] (\source) -- (\dest);
    \foreach \source/\dest/\lbl in {a/c/1,d/b/3,f/g/5}
        \path[edge] (\source) edge node {\lbl} (\dest);
    \foreach \source/\dest/\lbl in {e/j/7,k/h/8,c/f/6,g/d/4,b/a/2}
        \path[matched edge] (\source) edge node {\lbl} (\dest);
    \end{scope}
    \begin{scope}[xshift=23cm,yshift=5.6cm]
    \foreach \pos/\name in {{(0,0)/5}, {(0,2)/3}, {(0,4)/1}, {(4,0)/6}, {(4,2)/4}, {(4,4)/2}}
	\node[blue vertex] (\name) at \pos {$\name$};
    \foreach \source/\dest in {1/2,3/4,5/6}
         \path[edge] (\source) -- (\dest);
    \foreach \source/\dest in {1/6,3/2,5/4}
         \path[dashed edge] (\source) -- (\dest);
    \node at (0.1,5.1) {$A\setminus B$};
    \node at (4.1,5.1) {$B\setminus A$};
    \node at (2.2,-1.4) {$G(A,B)$ (solid)};
    \node at (2.2,-2.4) {$G(B,A)$ (dashed)};
    \end{scope}
\end{tikzpicture}
\end{center}
\caption{Two common bases $A$ and $B$ (left and middle) of the matroids $M^+$ and $M^-$, where $M^+$ and $M^-$ have as independent sets all subsets of edges of the graph where no two share an endpoint in the set of circled vertices, or the set of squared vertices, respectively.
The right hand side shows the exchangeability graphs $G(A,B)$ of $M^+$ (solid edges) and $G(B,A)$ of $M^-$ (dashed edges).
As the conditions of Theorem~\ref{thm:polytope} are met, the two bases are adjacent in the common base polytope, and adjacent in the flip graph shown in Figure~\ref{fig:flipgraph}.}
\label{fig:exchange}
\end{figure}

\paragraph{Hardness of flip distance between perfect matchings and $\alpha$-orientations.}

We prove that Problem~\ref{pb:bipmatchingdist} is \NP-complete, even for 2-connected bipartite subcubic planar graphs and $k=2$.
This clearly implies that Problem~\ref{pb:alphadist} is \NP-complete as well.

\begin{theorem}
\label{thm:hardness}
Given a 2-connected bipartite subcubic planar graph $G$ and a pair $X,Y$ of perfect matchings in $G$, deciding whether the flip distance between $X$ and $Y$ is at most two is \NP-complete.
\end{theorem}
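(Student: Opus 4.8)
The plan is to reduce from an \NP-complete problem of the ``exact cover''/``constraint satisfaction'' flavor, exploiting the characterization of flip distance~$\le 2$: the distance between perfect matchings $X$ and $Y$ is at most two precisely when either the symmetric difference $X\triangle Y$ is a single alternating cycle (distance~$1$), or there exist alternating cycles $C_1$ (alternating with respect to $X$) and $C_2$ (alternating with respect to the intermediate matching $X\triangle C_1$) whose ``combined effect'' is $X\triangle Y$. Crucially, by the example in Figure~\ref{fig:M1M2}, $C_1$ and $C_2$ need \emph{not} lie inside $X\triangle Y$: the first flip may create new alternating cycles not present before, and the second flip may undo part of the first flip's action on edges outside $X\triangle Y$. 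So a certificate of ``distance $\le 2$'' is a choice of a first alternating cycle $C_1$ in $X$ such that $(X\triangle C_1)\triangle Y$ is again a single alternating cycle. The reduction must engineer an instance where such a $C_1$ exists if and only if the source instance is satisfiable.

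Concretely, I would start from an \NP-complete variant of \textsc{Planar 3-SAT} or \textsc{Planar Positive 1-in-3-SAT} (planarity being available in strong forms, e.g.\ \textsc{Planar Monotone 3-SAT}), since we need the output graph to be planar, subcubic, bipartite, and $2$-connected. The construction would be gadget-based. First I would design a ``choice gadget'' for each variable: a small planar bipartite subcubic subgraph together with local fragments of $X$ and $Y$, such that the only way to extend a partial first-flip cycle $C_1$ consistently through the gadget corresponds to setting the variable \textsc{True} or \textsc{False}. The symmetric difference $X\triangle Y$ restricted to the gadget would be a fixed local pattern (pieces of cycles) that must be ``repaired'' in exactly two flips; the first flip's freedom is what encodes the truth assignment. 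Then I would build a ``clause gadget'': a planar bipartite subcubic fragment where the second flip $C_2=(X\triangle C_1)\triangle Y$ closes up into one single cycle if and only if at least one (respectively, exactly one, depending on the SAT variant) incident variable gadget routed its $C_1$ through in the ``satisfying'' way. The edges of $X\triangle Y$ that lie strictly outside the union of these satisfying routings are exactly the ones that force $C_2$ to be connected --- so the single-cycle condition (item~(iii)-type condition, here just ``one cycle'') becomes a global satisfiability check. Wires connecting variable gadgets to clause gadgets would be long subcubic paths carrying the partial-$C_1$ information; bipartiteness is maintained by making all gadgets and wires even-length, $2$-connectedness by a final global framing cycle, and subcubicity by splitting any high-degree junction into a tree of degree-$3$ vertices.

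Membership in \NP\ is immediate: a two-flip witness is a pair of cycles of total size polynomial in $|E(G)|$, and checking that they are alternating in the right matchings and compose to $X\triangle Y$ is polynomial-time. The correctness argument then has two directions: (soundness) any distance-$\le 2$ flip sequence, when restricted to the variable gadgets, must route $C_1$ coherently (this uses the rigidity of the gadgets --- essentially that a directed/alternating cycle entering a gadget has a unique way to leave it, argued by a small case analysis on the subcubic structure), yielding a satisfying assignment; (completeness) given a satisfying assignment, I exhibit $C_1$ explicitly by concatenating the chosen routings through variable gadgets, wires, and clause gadgets, and verify that $(X\triangle C_1)\triangle Y$ is a single alternating cycle.

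The main obstacle I anticipate is the completeness/soundness rigidity of the clause gadget: I need it to be simultaneously (a) planar, bipartite, subcubic, (b) such that $X\triangle Y$ inside it is repairable in two flips regardless of the incoming truth values (so the problem is never trivially ``no''), and yet (c) the single-cycle condition on the second flip genuinely forces a satisfying pattern --- the delicate point is controlling which edges outside the ``satisfying routing'' end up in $C_2$ and ensuring they glue the clause gadget's contribution to $C_2$ into the global cycle exactly when the clause is satisfied, and leave $C_2$ disconnected (hence no valid second flip, or rather a second flip that is not a single cycle, forcing distance $\ge 3$) otherwise. Getting this right while respecting the maximum degree~$3$ constraint, which severely limits branching, is where most of the construction's effort will go; I would likely first prove a non-subcubic, or non-planar, version as a warm-up and then spend the bulk of the work on a degree-reduction and planarization that preserves the two-flip structure. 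The $2$-connectivity requirement is comparatively easy and handled last by adding a boundary cycle.
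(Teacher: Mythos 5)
Your high-level understanding of the problem is sound: you correctly observe that membership in \NP\ is immediate, that a two-flip certificate is a pair of cycles $(C_1,C_2)$, and --- importantly --- that the first flip $C_1$ may and indeed must use edges outside $X\triangle Y$. However, what you have written is a research plan, not a proof, and the gap is exactly where you yourself anticipate it. You propose a SAT-style reduction (variable gadgets, clause gadgets, wires) but never construct any of the gadgets; you explicitly flag that the clause gadget, the degree reduction to subcubic, and the planarization are all unresolved, and these are precisely the places where such a reduction would have to be made to work. In particular the constraint that $G$ be bipartite and subcubic is severely restrictive --- a vertex of degree $3$ cannot ``branch'' an alternating cycle, and making a clause gadget that is (a) never trivially a ``no'' instance, (b) rigid enough to force a coherent truth assignment, and (c) glued so that the second cycle $C_2=(X\triangle C_1)\triangle Y$ is a \emph{single} cycle iff every clause is satisfied, is a genuinely delicate task that you have deferred rather than solved.

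The paper sidesteps all of this by choosing a different source problem that matches the geometry of the target much more tightly. Instead of \textsc{Planar 3-SAT}, it reduces from Ples\'{n}ik's \NP-completeness of directed Hamiltonicity in orientations of $2$-connected cubic planar graphs with no sinks or sources. Each vertex of the digraph $D$ is replaced by a single $C_4$-gadget (so there is only one gadget type, and it is trivially bipartite, planar, and subcubic); each arc $e$ of $D$ is replaced by an edge $x_e^+x_e^-$ outside the gadgets. The matchings $X$ and $Y$ are chosen so that $X\triangle Y$ is exactly the disjoint union of all $|V(D)|\ge 3$ gadget $4$-cycles. The crucial effect is that a cycle $C_1$ which, after contracting the gadgets, descends to a directed cycle of $D$ must visit \emph{every} gadget for the residual $(X\triangle C_1)\triangle Y$ to be a single alternating cycle --- which is precisely the condition that the contracted cycle be Hamiltonian in $D$. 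So ``visit every gadget'' plays the role that your global single-cycle-iff-satisfiable condition was supposed to play, but it comes for free from Hamiltonicity rather than from clause-gadget engineering. Directedness of $D$ is used to ensure $C_1$ is genuinely alternating (entering and leaving a gadget consistently), and cubicity of $D$ is used so that the contraction of $C_1$ is a simple cycle. This is why the paper's reduction is a few lines of gadget description plus a short two-direction argument, whereas your plan, even if it can be completed, would require an order of magnitude more case analysis. If you want to salvage your route, I would strongly encourage you to first carry out the clause gadget in full and check the single-cycle condition carefully; but be aware that the Hamiltonicity reduction is the natural fit here and gives you planarity, bipartiteness, subcubicity, and $2$-connectivity essentially for free from Ples\'{n}ik's theorem.
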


As direct consequences of the proof of Theorem~\ref{thm:hardness} we get:

\begin{corollary}
Unless $\P=\NP$, deciding whether the flip distance between two perfect matchings is at most $k$ is not fixed-parameter tractable with respect to parameter $k$.
\end{corollary}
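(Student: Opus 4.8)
\section*{Proof proposal}

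The plan is to derive this directly from Theorem~\ref{thm:hardness}. The crucial point is that Theorem~\ref{thm:hardness} establishes \NP-completeness already for the \emph{fixed} value $k=2$ of the parameter; in the language of parameterized complexity, the flip distance problem for perfect matchings is thus para-\NP-hard, and we invoke the standard fact that a para-\NP-hard problem cannot be fixed-parameter tractable unless $\P=\NP$. So the argument is short and purely a matter of correctly unpacking what fixed-parameter tractability with respect to $k$ would give us.

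Concretely, I would argue by contradiction. Suppose there were an algorithm that, given $G$, a pair $X,Y$ of perfect matchings of $G$, and an integer $k$, decides whether the flip distance between $X$ and $Y$ is at most $k$ in time $f(k)\cdot p(|G|)$ for some computable function $f$ and some polynomial $p$. Instantiating this algorithm with the input parameter pinned to $k=2$ yields a procedure running in time $f(2)\cdot p(|G|)$; since $f(2)$ is a constant, this is a polynomial-time algorithm for deciding whether the flip distance between two perfect matchings is at most two. By Theorem~\ref{thm:hardness} that decision problem is \NP-complete, so we would conclude $\P=\NP$, a contradiction. Hence no such algorithm exists unless $\P=\NP$.

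There is essentially no obstacle here beyond invoking Theorem~\ref{thm:hardness} with its parameter held at the constant $2$; the only point worth emphasizing in the write-up is precisely that the hardness of Theorem~\ref{thm:hardness} is for a \emph{fixed} $k$, which is what upgrades ``\NP-hard with $k$ in the input'' to ``para-\NP-hard''. I would also remark that the identical argument applies verbatim to Problem~\ref{pb:alphadist} for $\alpha$-orientations, and, since the hardness of Theorem~\ref{thm:hardness} holds already for 2-connected bipartite subcubic planar graphs, the non-existence of an FPT algorithm persists even when the input is restricted to these graph classes.
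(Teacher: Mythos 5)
Your argument is exactly the paper's intended one: the paper states this corollary as a ``direct consequence of the proof of Theorem~\ref{thm:hardness}'', and the content of that is precisely that \NP-completeness at the fixed value $k=2$ is para-\NP-hardness, which rules out an FPT algorithm unless $\P=\NP$. Correct, and essentially identical to the paper's reasoning.
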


\begin{corollary}
Unless $\P=\NP$, the flip distance between two perfect matchings is not approximable within a multiplicative factor $3/2-\epsilon$ in polynomial time, for any $\epsilon >0$.
\end{corollary}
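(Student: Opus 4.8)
The plan is to prove Theorem~\ref{thm:hardness} by a reduction from a suitable \NP-complete problem, exploiting the structure highlighted just before the statement: a shortest flip sequence may be forced to flip cycles \emph{outside} the symmetric difference $X\Delta Y$, and the gadget in Figure~\ref{fig:M1M2} shows how four ``obvious'' facial flips can sometimes be replaced by three cleverly chosen ones. The target is distance exactly two, so I would set things up so that $X\Delta Y$ consists of several disjoint alternating cycles, each of which would need its own flip if we insisted on staying inside $X\Delta Y$, but where a ``yes'' instance of the source problem corresponds to the existence of two alternating cycles $D_1,D_2$ (not in $X\Delta Y$) whose successive flips realize all the needed changes simultaneously. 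Concretely, flipping $D_1$ then $D_2$ changes exactly the edges in $(D_1\Delta D_2)$ after accounting for cancellation on $D_1\cap D_2$, so I need $D_1\Delta D_2 = X\Delta Y$ with the right alternating/orientation behavior at each step; the combinatorial choice of which edges lie in $D_1\cap D_2$ is where the source instance gets encoded.

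For the source problem I would use a variant of planar 3-SAT or, more likely, something like \textsc{Planar Monotone 1-in-3-SAT} or a planar/cubic version of \textsc{Positive Exactly-One-In-Three SAT}, since we need subcubic planar bipartite output and a tight ``exactly two'' count. First I would design a variable gadget: a small planar bipartite subcubic fragment with two perfect matchings $X,Y$ agreeing outside it, such that transforming the local $X$ into the local $Y$ can be done by flipping one of two ``long'' alternating cycles leaving the gadget, corresponding to the two truth values; crucially, flipping the ``short'' facial cycles inside would cost too much. Then a clause gadget that forces the long cycles chosen by its variables to be combinable into the global pair $D_1,D_2$ precisely when the clause is satisfied in the 1-in-3 sense. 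The global count is then calibrated so that distance $\le 2$ holds iff every clause is satisfied. I would verify planarity and bipartiteness by making all gadgets planar bipartite by construction and routing the connecting paths without crossings using the planar structure of the formula; 2-connectivity and the subcubic degree bound are enforced by padding paths (subdividing edges keeps bipartiteness if done with even-length paths, or one switches parity deliberately) and by never creating a vertex of degree $>3$.

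The key technical steps, in order: (1) fix the source \NP-complete problem and its planar embedding; (2) build the variable gadget and prove the local ``two options, each a single long-cycle flip; any facial-only solution is strictly longer'' lemma, using Theorem~\ref{thm:polytope}-style reasoning only implicitly (we really just need an explicit case analysis of short flip sequences, since $k=2$ is tiny); (3) build the clause gadget and prove it is satisfied iff the long cycles entering it can be spliced consistently; (4) assemble the global instance, compute $X\Delta Y$, and prove the two directions: a satisfying assignment yields an explicit two-flip sequence $D_1,D_2$ (interleaving the chosen long cycles), and conversely any two-flip sequence $Z_1,Z_2$ with $Z_1\Delta Z_2 = X\Delta Y$ (up to cancellation) must decompose along the gadgets into consistent per-variable choices, hence a satisfying assignment; (5) check membership in \NP\ (a two-flip certificate is two cycles, verifiable in polynomial time) and the structural constraints (planar, bipartite, subcubic, 2-connected).

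The main obstacle I expect is step (4)'s converse direction: ruling out ``unexpected'' two-flip solutions. With only two flips $Z_1,Z_2$ allowed, $Z_1$ and $Z_2$ are directed cycles in the respective intermediate orientations, and their symmetric difference after cancellation must be exactly $X\Delta Y$; I must show that the way $Z_1\cap Z_2$ can sit across the gadgets is rigid enough that it forces a globally consistent Boolean assignment, i.e.\ that no ``cheating'' cycle can borrow edges from several variable gadgets in an inconsistent way. This is precisely the place where the planar, subcubic, 2-connected hypotheses and the careful parity/length design of the connector paths must be used to kill all parasitic solutions; I anticipate needing an invariant (for instance a parity or a flow-like quantity assigned to each gadget-boundary) that any valid two-flip sequence must respect, thereby matching the 1-in-3 clause condition. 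Once that rigidity lemma is in place, both directions of the reduction, as well as the two corollaries (non-FPT in $k$ and no $(3/2-\epsilon)$-approximation), follow immediately by the standard gap argument, since we will have produced instances where the distance is either $2$ or at least $3$.
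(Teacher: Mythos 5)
Your final sentence captures the only thing the corollary actually needs, and it matches the paper: the reduction in Theorem~\ref{thm:hardness} produces, in polynomial time, instances whose flip distance is either exactly $2$ (when the underlying source instance is a ``yes'') or at least $3$ (otherwise), so a polynomial-time algorithm with approximation ratio $3/2-\epsilon$ would return a value $\le (3/2-\epsilon)\cdot 2 < 3$ in the first case and $\ge 3$ in the second, thereby deciding an \NP-hard problem. That is exactly the paper's argument, and it is correct; nothing more is needed beyond invoking the already-proved Theorem~\ref{thm:hardness}. Where you diverge is in spending nearly the whole proposal re-deriving Theorem~\ref{thm:hardness} itself, and via a different source problem: you propose variable/clause gadgets for a planar \textsc{1-in-3-SAT}-type problem, and you rightly flag the converse direction (ruling out parasitic two-flip solutions that scavenge edges from several gadgets inconsistently) as the hard part needing a rigidity invariant. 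The paper instead reduces from directed Hamiltonicity in sink- and source-free cubic planar digraphs (Ples\'nik). Each vertex of the digraph becomes a $C_4$-gadget, $X\Delta Y$ is exactly the disjoint union of these $|V(D)|\ge 3$ four-cycles, so one flip is never enough and a two-flip solution must use two long cycles that meet every gadget; the $V_1/V_2$ structure of the gadgets then forces the contraction of the first flipped cycle to be a directed Hamiltonian cycle. This makes your anticipated rigidity lemma essentially automatic, which is precisely what your SAT route would have to re-engineer by hand. Both routes would yield the same $2$-versus-$\ge 3$ gap and hence the same corollary, but the Hamiltonicity reduction buys the rigidity for free, whereas your sketch leaves that step open.
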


We also prove that Problem~\ref{pb:alphadistdiff} is \NP-complete, even for $4$-regular graphs and $k=2$.

\begin{theorem}
\label{thm:hardnessdiff}
Given a $4$-regular graph $G$ and a pair $X,Y$ of $\alpha$-orientations of $G$, deciding whether the flip distance between $X$ and $Y$ is at most two is \NP-complete.  Moreover, the problem remains \NP-complete if we only allow flipping edges that are oriented differently in $X$ and $Y$
\end{theorem}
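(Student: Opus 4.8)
The plan is to reduce from the same problem used for Theorem~\ref{thm:hardness}, namely the flip distance between perfect matchings of a 2-connected bipartite subcubic planar graph, but to repackage the construction so that the host graph becomes $4$-regular and the question is about $\alpha$-orientations with the extra restriction that only edges oriented differently in $X$ and $Y$ may be flipped. First I would recall from Section~\ref{sec:pm} that the hard instances produced there consist of a planar graph $H$ together with two perfect matchings $X_0,Y_0$ whose symmetric difference is a single cycle $Z$, and the question is whether $X_0$ can be turned into $Y_0$ by two alternating-cycle flips; equivalently, whether there are two alternating cycles $A_1,A_2$ (not necessarily inside $Z$) with $A_1\triangle A_2=Z$ and each $A_i$ alternating with respect to the appropriate intermediate matching. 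Translating via the bipartite encoding from the ``From $\alpha$-orientations to perfect matchings'' paragraph, $X_0,Y_0$ become $\alpha$-orientations $X,Y$ of $H$ differing exactly on the directed cycle $Z$, and a flip sequence of length two corresponds to two directed cycles whose symmetric difference (as arc sets) is $Z$.

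The key step is a \emph{gadget replacement to make the graph $4$-regular while not creating any new short flips}. Since $H$ is subcubic, I would attach to each vertex $v$ of degree $1$ or $2$ a small rigid gadget that raises its degree to $4$ but whose internal edges are frozen, i.e.\ oriented identically in $X$ and $Y$ and, crucially, contained in no directed cycle that could be used in a cheap flip sequence. A convenient choice is to add, for each deficient vertex, one or two pendant ``digons are not allowed, so use triangles or $K_4$-minus-an-edge'' type gadgets whose unique $\alpha$-orientation forces the connecting arcs, so that no directed cycle of the new graph passes through the gadget except ones entirely inside it, and those are the same in $X$ and $Y$. Because we are allowed to stipulate that only edges differing in $X$ and $Y$ may be flipped (the ``moreover'' clause), this is automatic for the restricted version: the frozen gadget edges simply cannot be touched. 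For the unrestricted version I would argue that any flip using a gadget edge can be shortcut, so an optimum of length $\le 2$ in the new instance exists iff one does in the old instance; here one uses that the gadget's directed cycles are vertex-disjoint from $Z$ so flipping them is wasted, together with the fact that $|Z|$-type parity prevents a length-$2$ solution from spending a flip on a gadget cycle.

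Then I would verify the two directions of the reduction. For completeness, a length-$\le 2$ flip sequence $A_1,A_2$ in $H$ with $A_1\triangle A_2=Z$ lifts verbatim to the $4$-regular graph, since $A_1,A_2$ avoid all added edges and the added edges are oriented the same in $X$ and $Y$, so the restriction ``only flip edges differing in $X,Y$'' is respected. For soundness, given a length-$\le 2$ flip sequence in the new instance, I restrict attention to the edges that actually differ in $X$ and $Y$ — these all lie in $H$, in fact on $Z$ — and argue that after removing any gadget-internal cycles used (which change nothing net on the symmetric difference and are forbidden in the restricted model anyway), what remains is a length-$\le 2$ sequence of directed cycles of $H$ whose symmetric difference is $Z$, hence a valid solution for the original instance. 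Membership in \NP\ is immediate: a certificate is the pair of directed cycles, checkable in polynomial time. The main obstacle I anticipate is the unrestricted case: one must make the degree-raising gadgets genuinely ``inert'' so that a shortest flip sequence never has an incentive to route through them — designing gadgets with a forced unique orientation and controlling how their directed cycles interact with a would-be two-flip certificate (in particular ruling out that one flip is a gadget cycle and the other a single long cycle of $H$ that is \emph{not} alternating in the original but becomes usable once an extra gadget edge is flipped) is where the careful case analysis will be needed; the ``moreover'' restricted version sidesteps exactly this and is the cleaner half of the theorem.
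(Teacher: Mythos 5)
Your proposal takes a genuinely different route from the paper, and unfortunately it has a fatal flaw at its starting point. The paper does not reduce from the perfect-matching problem of Theorem~\ref{thm:hardness} at all; it reduces directly from a hardness result of Peroche: given a digraph $D$ with all in- and outdegrees equal to $2$, it is \NP-complete to decide whether $E(D)$ is the union of two directed Hamiltonian cycles. Taking $X=D$ and $Y=\overleftarrow{D}$ as $\alpha$-orientations of the $4$-regular underlying graph with $\alpha\equiv 2$, the flip distance is at most $2$ exactly when $D$ decomposes into two directed Hamiltonian cycles, and since $X$ and $Y$ disagree on \emph{every} edge, the restricted and unrestricted versions coincide trivially — no gadgets are needed and the ``moreover'' clause is free.

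Your reduction cannot be repaired along the lines you propose, for two concrete reasons. First, you mischaracterize the hard instances of Theorem~\ref{thm:hardness}: there $X\Delta Y$ is not a single cycle $Z$, but the disjoint union of all the $C_4$-gadget cycles (one per vertex of the source cubic digraph, hence at least three of them); the proof of Theorem~\ref{thm:hardness} even uses this explicitly to rule out a one-flip solution. Second, and decisively for the ``moreover'' part: because $X\Delta Y$ in those instances is a disjoint union of $\ge 3$ cycles, the \emph{restricted} flip distance (flipping only edges that differ) equals the number of those cycles and is therefore always $\ge 3$. As the paper notes, Problem~\ref{pb:alphadistdiff} is \emph{trivial} for perfect matchings precisely for this reason. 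So no degree-raising gadget, however inert, can convert those instances into hard instances for the restricted problem — the restricted answer is already determined and uninteresting before you add gadgets. This is the opposite of what you expected: you treated the restricted version as ``the cleaner half,'' but in your scheme it is the one that collapses. (Your soundness argument for the unrestricted version is also unfinished, as you acknowledge: the gadgets are described only schematically, and the interaction of gadget cycles with a two-flip certificate is exactly where the proof would live, but it never materializes.)
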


The proofs of Theorem~\ref{thm:hardness} and \ref{thm:hardnessdiff} are presented in Section~\ref{sec:pm}.

\paragraph{From $\alpha$-orientations in planar graphs to $c$-orientations.}

In what follows, we generalize the problem, via planar duality, to flip distances in so-called $c$-orientations.

Consider an arbitrary 2-connected plane graph $G$ and its planar dual $G^\ast$.
Then for any orientation $D$ of the edges of $G$, the directed dual $D^\ast$ of $D$ is obtained by orienting any dual edge forward if it crosses a left-to-right arc in $D$ in a simultaneous plane embedding of $G$ and $G^\ast$, and backward otherwise; see Figure~\ref{fig:duality}.
Edge sets of directed cycles in $D$ correspond to edge sets of minimal \emph{directed cuts} in $D^\ast$ and vice-versa.
Hence $D$ is acyclic (respectively, strongly connected) if and only if $D^\ast$ is strongly connected (respectively, acyclic).
A \emph{directed vertex cut} is a cut consisting of all edges incident to a sink or a source vertex.
Directed facial cycles in $D$ are in bijection with the directed vertex cuts in $D^\ast$, and vice versa.
The unbounded face in the plane embedding of $D$ can be chosen such that it corresponds to a fixed vertex $\top$ in $D^\ast$.

Let $D$ be an $\alpha$-orientation of $G$.
Given a minimal cut in $D$ separating $U \subseteq V(D)$ from $\overline{U}:=V(D) \setminus U$, we denote by $\delta^+(U)$ the edges pointing from $U$ to $\overline{U}$ in $D$.
We also let $d_D^+(v)$ denote the outdegree of vertex $v$ in $D$.
We have
$$|\delta^+(U)|=\sum_{v \in U}{d_D^+(v)}-|E(G[U])|=\sum_{v \in U}{\alpha(v)}-|E(G[U])|,$$
which only depends on $\alpha$ and $G$.
Consequently, the set of orientations of $G^\ast$ which are directed duals of $\alpha$-orientations of $G$ can be characterized by the property that for every cycle $C$ in $G^\ast$, the number of edges in clockwise direction is fixed by a certain value $c(C)$ independent of the orientation.
The flip operation between $\alpha$-orientations of $D$ consists of the reversal of a directed cycle.
In the corresponding set of dual orientations of $D^\ast$, this translates to the reversal of the orientations of the edges in a minimal directed cut, as shown on Figure~\ref{fig:duality}.

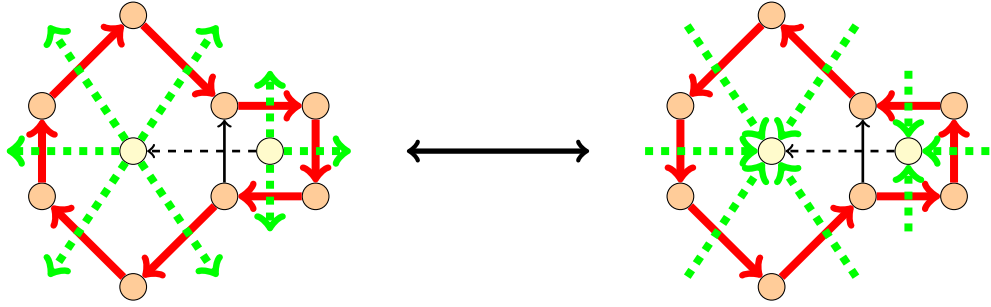
\begin{figure}
\begin{center}
\begin{tikzpicture}[scale=.6, auto,swap]
    \begin{scope}[xshift=0cm]
    \foreach \pos/\name in {{(2,6)/e}, {(6,6)/f}, {(8,4)/k}, {(4,2)/m},{(4,8)/c}, {(8,6)/g}, {(2,4)/i}, {(6,4)/j}}
        \node[vertex] (\name) at \pos {};
    \foreach \pos/\name in {{(4,5)/d1}, {(7,5)/d2}}
        \node[dual vertex] (\name) at \pos {};
    \foreach \pos/\name in {{(1,5)/s}, {(2,8)/u}, {(6,8)/v}, {(7,7)/w}, {(9,5)/x}, {(2,2)/y}, {(6,2)/z}, {(7,3)/t}}
        \node (\name) at \pos {};
    \path[oriented edge] (j) -- (f);
    \path[oriented edge, dashed] (d2) -- (d1);
    \foreach \source/\dest in {i/e,m/i,j/m,k/j,g/k,f/g,c/f,e/c}
        \path[selected edge] (\source) -- (\dest);
    \foreach \source/\dest in {d1/s,d1/u,d1/v,d1/y,d1/z,d2/w,d2/x,d2/t}
        \path[dual selected edge] (\source) -- (\dest);
    \end{scope}
    \begin{scope}[xshift=10cm,yshift=5cm]
    \path[flip] (0,0) -- (4,0);
    \end{scope}
    \begin{scope}[xshift=14cm]
    \foreach \pos/\name in {{(2,6)/e}, {(6,6)/f}, {(8,4)/k}, {(4,2)/m},{(4,8)/c}, {(8,6)/g}, {(2,4)/i}, {(6,4)/j}}
        \node[vertex] (\name) at \pos {};
    \foreach \pos/\name in {{(4,5)/d1}, {(7,5)/d2}}
        \node[dual vertex] (\name) at \pos {};
    \foreach \pos/\name in {{(1,5)/s}, {(2,8)/u}, {(6,8)/v}, {(7,7)/w}, {(9,5)/x}, {(2,2)/y}, {(6,2)/z}, {(7,3)/t}}
        \node (\name) at \pos {};
    \path[oriented edge] (j) -- (f);
    \path[oriented edge, dashed] (d2) -- (d1);
    \foreach \source/\dest in {c/e,f/c,g/f,k/g,j/k,m/j,i/m,e/i}
        \path[selected edge] (\source) -- (\dest);
    \foreach \source/\dest in {s/d1,u/d1,v/d1,y/d1,z/d1,w/d2,x/d2,t/d2}
        \path[dual selected edge] (\source) -- (\dest);
    \end{scope}
\end{tikzpicture}
\end{center}
\caption{Duality between flips in $\alpha$-orientations (solid edges) and in $c$-orientations (dashed edges).}
\label{fig:duality}
\end{figure}

The same notion has been investigated more generally without planarity conditions under the name of $c$-orientations by Propp~\cite{P02} and Knauer~\cite{K07}.
Given a graph $G$, we can fix an arbitrary direction of traversal for each cycle $C$.
Given a graph and an assignment $c(C) \in \mathbb{N}_0$ to each cycle in $G$, one may define a \emph{$c$-orientation} of $G$ to be an orientation having exactly $c(C)$ edges in forward direction for every cycle $C$ in $G$.
Note that it is sufficient to define the function $c$ on a cycle basis of $G$, which consists of linearly many cycles.
The \emph{flip} operation on the set $\cR_c$ of such $c$-orientations of a graph is defined as the reversal of all edges in a minimal directed cut.
It is not difficult to see that flips make the set of $c$-orientations of a graph connected (this will be noted in Section~\ref{sec:lattice}).

From the duality between planar $\alpha$-orientations and planar $c$-orientations, determining flip distances between $\alpha$-orientations of 2-connected planar graphs reduces to determining flip distances between the dual $c$-orientations. Note that planar duals of bipartite graphs are exactly the Eulerian planar graphs. Theorem~\ref{thm:hardness} therefore directly yields:

\begin{corollary}
Given a Eulerian planar multigraph $G$ and a pair $X,Y$ of $c$-orientations of $G$, deciding whether the flip distance between $X$ and $Y$ is at most two is \NP-complete.
\end{corollary}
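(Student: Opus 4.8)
The plan is to verify membership in \NP\ by hand and to obtain \NP-hardness by a reduction from the perfect-matching flip-distance problem of Theorem~\ref{thm:hardness}, transported to the dual side via the $\alpha$-orientation/$c$-orientation planar duality described above. Membership in \NP\ is immediate: a flip sequence of length at most two is certified by at most two edge subsets, and one checks in polynomial time that each is a minimal directed cut of the orientation it is applied to and that performing the two reversals in order turns $X$ into $Y$. (Since the class of $c$-orientations is closed under cut reversals, the function $c$ itself need not even be inspected once $X$ is given.)

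For hardness I would start from an instance of Problem~\ref{pb:bipmatchingdist}: a 2-connected bipartite subcubic planar graph $G$ with perfect matchings $X,Y$, for which deciding flip distance $\le 2$ is \NP-hard by Theorem~\ref{thm:hardness}. First replace $X,Y$ by the corresponding $\alpha$-orientations $\vec X,\vec Y$ of $G$ (matching edges oriented $V_1\to V_2$, the rest $V_2\to V_1$), using that alternating-cycle flips of matchings are exactly directed-cycle flips of these orientations, so flip distances are unchanged. Then fix a plane embedding of $G$ whose outer face corresponds to the distinguished vertex $\top$, let $G^\ast$ be the planar dual — an Eulerian planar multigraph, since planar duals of bipartite graphs are exactly the Eulerian planar graphs — and let $c$ be the function on a cycle basis of $G^\ast$ whose $c$-orientations are precisely the directed duals of $\alpha$-orientations of $G$, as set up above. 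Output $(G^\ast,\vec X^\ast,\vec Y^\ast,k=2)$; this is clearly computable in polynomial time.

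Correctness is the heart of the argument, though the necessary tools are already in place. The map $D\mapsto D^\ast$ is a bijection from $\alpha$-orientations of $G$ to $c$-orientations of $G^\ast$, and under it reversing a directed cycle of $D$ coincides with reversing the corresponding minimal directed cut of $D^\ast$, with every minimal directed cut of $D^\ast$ arising in this way. Hence a flip sequence of length $t$ from $\vec X$ to $\vec Y$ maps termwise to a flip sequence of length $t$ from $\vec X^\ast$ to $\vec Y^\ast$, and conversely, so the two flip distances are equal; in particular one is at most two iff the other is. The main obstacle to be careful about is precisely that this cycle/cut correspondence is a genuine bijection between the relevant orientation sets: this uses 2-connectivity of $G$ (so that planar duality is an involution and every face boundary is a cycle) and the freedom to place the outer face at $\top$, which makes the directed-cut structure of $G^\ast$ match the directed-cycle structure of $G$ edge for edge. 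Combining this equivalence with Theorem~\ref{thm:hardness} and the \NP\ membership above yields \NP-completeness.
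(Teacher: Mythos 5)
Your proposal is correct and follows essentially the same route as the paper: dualize the hard instance of Theorem~\ref{thm:hardness} (perfect matchings viewed as $\alpha$-orientations) to $c$-orientations on the planar dual, which is Eulerian because $G$ is bipartite, and use that cycle flips correspond bijectively to cut flips under planar duality. The paper states this more tersely, but the underlying argument is the one you give.
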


\paragraph{$c$-orientations and distributive lattices.}

A more local operation consists of flipping only directed vertex cuts, induced by sources and sinks, excluding a fixed vertex $\top$.
We will refer to this special case as a \emph{vertex flip}.
Specifically, given a pair of $c$-orientations $X,Y$ of a graph $G$ with a fixed vertex $\top$, we aim to transform $X$ into $Y$ using only vertex flips at vertices distinct from $\top$.

A $c$-orientation $X$ of $G$ may not be acyclic, that is, there is a cycle $C$ in $G$ which is directed in $X$.
According to the definition of a $c$-orientation, this means that $C$ keeps the same orientation in every $c$-orientation of $G$.
Consequently, any (minimal) directed cut in a $c$-orientation of $G$ is disjoint from $E(C)$.
Contracting the cycle $C$ in $G$, we end up with a smaller graph $G'$ containing the same (minimal) directed cuts, such that the $c$-orientations of $G$ are determined by their corresponding orientations on $G'$.
We can therefore safely assume that the $c$-orientations that we consider are all acyclic.
Similarly, $G$ will be assumed to be connected.

\begin{problem}
\label{pb:vertexflip}
Given a connected graph $G$ with a fixed vertex $\top$ and a pair $X,Y$ of acyclic $c$-orientations, what is the length of a shortest vertex flip sequence transforming $X$ into $Y$?
\end{problem}

We should convince ourselves that under the assumptions made above, every pair of $c$-orientations is reachable from each other by vertex flips.
This property is provided in a much stronger way by a distributive lattice structure on the set $\cR_c$; see Figure~\ref{fig:lattice}.
The next theorem is a special case of Theorem~1 in Propp~\cite{P02} where the $c$-orientations are acyclic.

\begin{theorem}[\cite{P02,K07}]
\label{thm:DL}
Let $G$ be a graph with fixed vertex $\top$ and $\cR_c$ a set of acyclic $c$-orientations of $G$.
Then the partial order $\le_c$ on $\cR_c$ in which $Y$ covers $X$ if and only if $Y$ can be obtained from $X$ by flipping a source defines a distributive lattice on $\cR_c$.
\end{theorem}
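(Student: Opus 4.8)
The plan is to exhibit $(\cR_c,\le_c)$ as a sublattice of the integer lattice $\mathbb{Z}^{V(G)}$ ordered componentwise, so that distributivity comes for free. Throughout I use that, as explained above, we may assume all $c$-orientations in $\cR_c$ are acyclic and that $G$ is connected. Fix a reference orientation $D_0\in\cR_c$; for each arc $u\to v$ of $D_0$ and each $D\in\cR_c$ let $x_D(uv)\in\{0,1\}$ record whether $D$ reverses that edge ($1$ if reversed). Being a $c$-orientation is equivalent to the signed sum of $x_D$ vanishing around every cycle, i.e.\ to $x_D$ being an integral tension; hence there is a unique potential $\pi_D\colon V(G)\to\mathbb{Z}$ with $\pi_D(\top)=0$ and $\pi_D(u)-\pi_D(v)=x_D(uv)$ for every arc $u\to v$ of $D_0$. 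First I would check that $D\mapsto\pi_D$ is a bijection from $\cR_c$ onto
\[
  \mathcal{P}:=\bigl\{\, p\colon V(G)\to\mathbb{Z} \;\bigm|\; p(\top)=0,\ \ p(u)-p(v)\in\{0,1\}\ \text{for every arc } u\to v \text{ of } D_0 \,\bigr\};
\]
injectivity is clear since $\pi_D$ determines $x_D$ and hence $D$, and surjectivity holds because any $p\in\mathcal{P}$ defines a reorientation of $D_0$ with vanishing tension around every cycle, i.e.\ a $c$-orientation, which is acyclic by assumption. Since such $p$ is $1$-Lipschitz along edges and $G$ is connected, $\mathcal{P}$ is finite.

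Next I would translate the covering relation into this language. A short case check on the edges incident to $v$ shows: if $v\neq\top$ is a source of $D$ and $D'$ is obtained by flipping it, then $\pi_{D'}=\pi_D+\mathbf{1}_v$, where $\mathbf{1}_v$ adds $1$ at $v$ and $0$ elsewhere; and conversely, for $v\ne\top$ both $\pi$ and $\pi+\mathbf{1}_v$ lie in $\mathcal{P}$ if and only if $v$ is a source of the orientation $D_\pi$ corresponding to $\pi$. Thus every covering step of $\le_c$ adds a single unit vector, so $X\le_c Y$ implies $\pi_X\le\pi_Y$ componentwise; in particular $\le_c$ is antisymmetric, hence a genuine partial order.

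The core of the argument is the reverse inclusion, which is also where acyclicity is essential. The key lemma I would prove is: if $p\le q$ in $\mathcal{P}$ with $p\neq q$, then the digraph $D_p$ has a source inside $U:=\{v: p(v)<q(v)\}$. To see this, use acyclicity of $D_p$ to pick a source $v$ of the induced acyclic digraph $D_p[U]$; if $v$ were not a source of $D_p$ it would have an in-arc $w\to v$ with $w\notin U$, i.e.\ $p(w)=q(w)$, and then expressing $x_{D_p}(wv)$ and $x_{D_q}(wv)$ in terms of the potentials forces $x_{D_q}(wv)$ to be $\le -1$ or $\ge 2$, contradicting $x_{D_q}\in\{0,1\}^{E(G)}$. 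Iterating the lemma and flipping such sources transforms $D_p$ into $D_q$ in exactly $\sum_{v}\bigl(q(v)-p(v)\bigr)$ upward vertex flips; hence $p\le q$ implies $X\le_c Y$. Combined with the previous paragraph this shows $D\mapsto\pi_D$ is a poset isomorphism $(\cR_c,\le_c)\cong(\mathcal{P},\le)$.

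Finally I would verify that $\mathcal{P}$ is a sublattice of $\mathbb{Z}^{V(G)}$: given $p,q\in\mathcal{P}$ and an arc $u\to v$ of $D_0$, the inequalities $p(v)\ge p(u)-1$ and $q(v)\ge q(u)-1$ immediately yield $\min(p,q)(u)-\min(p,q)(v)\in\{0,1\}$, and the dual inequalities give the same for $\max(p,q)$; the normalization at $\top$ is preserved as well. Since $\mathbb{Z}^{V(G)}$ with the componentwise order is a distributive lattice and $\mathcal{P}$ is a finite sublattice, $\mathcal{P}$ is a finite distributive lattice, and transporting along the isomorphism proves the theorem. I expect the source-finding lemma in the third paragraph to be the main obstacle: it is the one step where acyclicity is genuinely used, and it is exactly what makes the combinatorially defined covering relation ``flip a source'' coincide with ``increase one coordinate by one'' in $\mathcal{P}$.
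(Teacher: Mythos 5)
The paper does not actually prove Theorem~\ref{thm:DL}; it is quoted from Propp~\cite{P02} and Knauer~\cite{K07}, and the potential embedding it rests on is recorded separately as Theorem~\ref{thm:embedding}. Your argument is correct and is essentially that standard potential (height-function) proof: you encode each $c$-orientation as the integral potential of its reversal tension relative to a reference orientation, show flipping a source is adding a unit vector, prove the ``source inside $U=\{p<q\}$'' lemma to recover componentwise dominance from the cover relation, and finish by observing that the image is closed under $\min$ and $\max$. The only place worth being a touch more explicit is the remark that $D_p$ is ``acyclic by assumption'': the hypothesis is about the given set $\cR_c$, so one should add the one-line observation (made earlier in the paper) that a $c$-orientation contains a directed cycle $C$ if and only if $c(C)\in\{0,|C|\}$, hence either every $c$-orientation is acyclic or none is; with that noted, the proof is complete.
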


Hence Problem~\ref{pb:vertexflip} consists of finding shortest paths in the cover graph of a distributive lattice, where the size of the lattice can be exponential in the size of the input $G$.

\begin{figure}
\newcommand{\basegraph}[4]
{
    \draw[fill=white] (0,1) ellipse (5.4cm and 3.9cm);
    \foreach \pos/\name/\lbl in {{(0,2)/b/#1}, {(-2,0)/d/#2}, {(0,0)/e/#3}, {(2,0)/f/#4}}
        \node[dual vertex] (\name) at \pos {\lbl};
    \node[dual vertex] (a) at (0,4) {$\top$};
    \foreach \pos/\name in {{(1,-3)/c1}, {(4.3,-1.5)/c2}, {(4.3,0)/c3}, {(4.3,3.5)/c4}, {(-4,0)/c5}, {(-4,2)/c6}, {(4,0)/c7}, {(4,2)/c8}}
        \node[shape=coordinate] (\name) at \pos {};
}
\begin{center}
\begin{tikzpicture}[scale=.3]
\begin{scope}[xshift=5cm, yshift=1cm]
    \path[edge] (0,-2) -- (0,7);
    \path[edge] (0,7) -- (-10,14);
    \path[edge] (0,7) -- (10,14);
    \path[edge] (-10,14) -- (0,21);
    \path[edge] (10,14) -- (0,21);
    \path[edge] (0,21) -- (0,30);
\end{scope}
\begin{scope}[xshift=5cm, yshift=-2cm]
\basegraph{0}{0}{0}{0}
    \foreach \source/\dest in {b/a,d/b,e/b,f/b,e/d,e/f}
        \path[oriented edge] (\source) -- (\dest);
    \path[oriented edge] (e) .. controls (c1) and (c2) .. (c3) .. controls (c4) .. (a);        
    \path[oriented edge] (d) .. controls (c5) and (c6) .. (a);
    \path[oriented edge] (f) .. controls (c7) and (c8) .. (a);
\end{scope}
\begin{scope}[xshift=5cm, yshift=7cm]
\basegraph{0}{0}{1}{0}
    \foreach \source/\dest in {b/a,d/b,b/e,f/b,d/e,f/e}
        \path[oriented edge] (\source) -- (\dest);
    \path[oriented edge] (a) .. controls (c4) .. (c3) .. controls (c2) and (c1) .. (e);        
    \path[oriented edge] (d) .. controls (c5) and (c6) .. (a);
    \path[oriented edge] (f) .. controls (c7) and (c8) .. (a);
\end{scope}
\begin{scope}[xshift=5cm, yshift=21cm]
\basegraph{0}{1}{1}{1}
    \foreach \source/\dest in {b/a,b/d,b/e,b/f,e/d,e/f}
        \path[oriented edge] (\source) -- (\dest);
    \path[oriented edge] (a) .. controls (c4) .. (c3) .. controls (c2) and (c1) .. (e);        
    \path[oriented edge] (a) .. controls (c6) and (c5) .. (d);
    \path[oriented edge] (a) .. controls (c8) and (c7) .. (f);
\end{scope}
\begin{scope}[xshift=5cm, yshift=30cm]
\basegraph{1}{1}{1}{1}
    \foreach \source/\dest in {a/b,d/b,e/b,f/b,e/d,e/f}
        \path[oriented edge] (\source) -- (\dest);
    \path[oriented edge] (a) .. controls (c4) .. (c3) .. controls (c2) and (c1) .. (e);        
    \path[oriented edge] (a) .. controls (c6) and (c5) .. (d);
    \path[oriented edge] (a) .. controls (c8) and (c7) .. (f);
\end{scope}
\begin{scope}[xshift=-5cm, yshift=14cm]
\basegraph{0}{0}{1}{1}
    \foreach \source/\dest in {b/a,d/b,b/e,b/f,d/e,e/f}
        \path[oriented edge] (\source) -- (\dest);
    \path[oriented edge] (a) .. controls (c4) .. (c3) .. controls (c2) and (c1) .. (e);        
    \path[oriented edge] (d) .. controls (c5) and (c6) .. (a);
    \path[oriented edge] (a) .. controls (c8) and (c7) .. (f);
\end{scope}
\begin{scope}[xshift=15cm, yshift=14cm]
\basegraph{0}{1}{1}{0}
    \foreach \source/\dest in {b/a,b/d,b/e,f/b,e/d,f/e}
    \path[oriented edge] (\source) -- (\dest);
    \path[oriented edge] (a) .. controls (c4) .. (c3) .. controls (c2) and (c1) .. (e);        
    \path[oriented edge] (a) .. controls (c6) and (c5) .. (d);
    \path[oriented edge] (f) .. controls (c7) and (c8) .. (a);
\end{scope}
\end{tikzpicture}
\end{center}
\caption{The distributive lattice induced by vertex flips in $c$-orientations. The reference orientation at the bottom is the directed dual $D^\ast$ of the orientation $D$ of the graph $G$ used in Figures~\ref{fig:match} and \ref{fig:flipgraph}, where some parallel arcs incident with $\top$ are grouped together for simplicity.
The numbers depicted at the vertices indicate the number of times that each vertex is flipped with respect to the reference orientation.}
\label{fig:lattice}
\end{figure}

\paragraph{Every distributive lattice is a lattice of $c$-orientations.}

We next point out that every distributive lattice is isomorphic to the distributive lattice induced by a set of $c$-orientations of a graph.
This relationship was described by Knauer~\cite{K07b}.

In order to represent a given distributive lattice $L$ by an isomorphic lattice of $c$-orientations, we need to construct a corresponding digraph $D(L)$. For this purpose, we shortly recall a classical result from lattice theory, \emph{Birkhoff's Theorem} (see~\cite{BP06}).

For any distributive lattice $L$, $\cJ(L)$ is the subposet of $L$ induced by the set of \emph{join-irreducible} elements, these are the elements of $L$ covering exactly one element.
On the other hand, given any poset $P$ we may look at the distributive lattice $\cO(P)$ formed by the downsets of $P$ ordered by inclusion.
Birkhoff's Theorem in our setting asserts that those two operations  are inverse in the sense that $P \cong \cJ(\cO(P))$ for any finite poset $P$ and $\cO(\cJ(L)) \cong L$ for any finite distributive lattice.

The idea is to define a digraph $D(L)$ whose vertex set consists of the elements of $\cJ(L)$ with an additional vertex $\top$.
The digraph is obtained from the natural upward-orientation of $\cJ(L)$ plus additional arcs from all the sinks and sources to $\top$.
Let $G(L)$ be the underlying graph of $D(L)$ and let $L_c(D(L))$ denote the distributive lattice obtained from the set of $c$-orientations of $G(L)$ containing $D(L)$ with fixed (non-flippable) vertex $\top$.
An example of this construction is provided in Figure~\ref{fig:birkhoff}.

\begin{figure}
\centering
\includegraphics[scale=0.7]{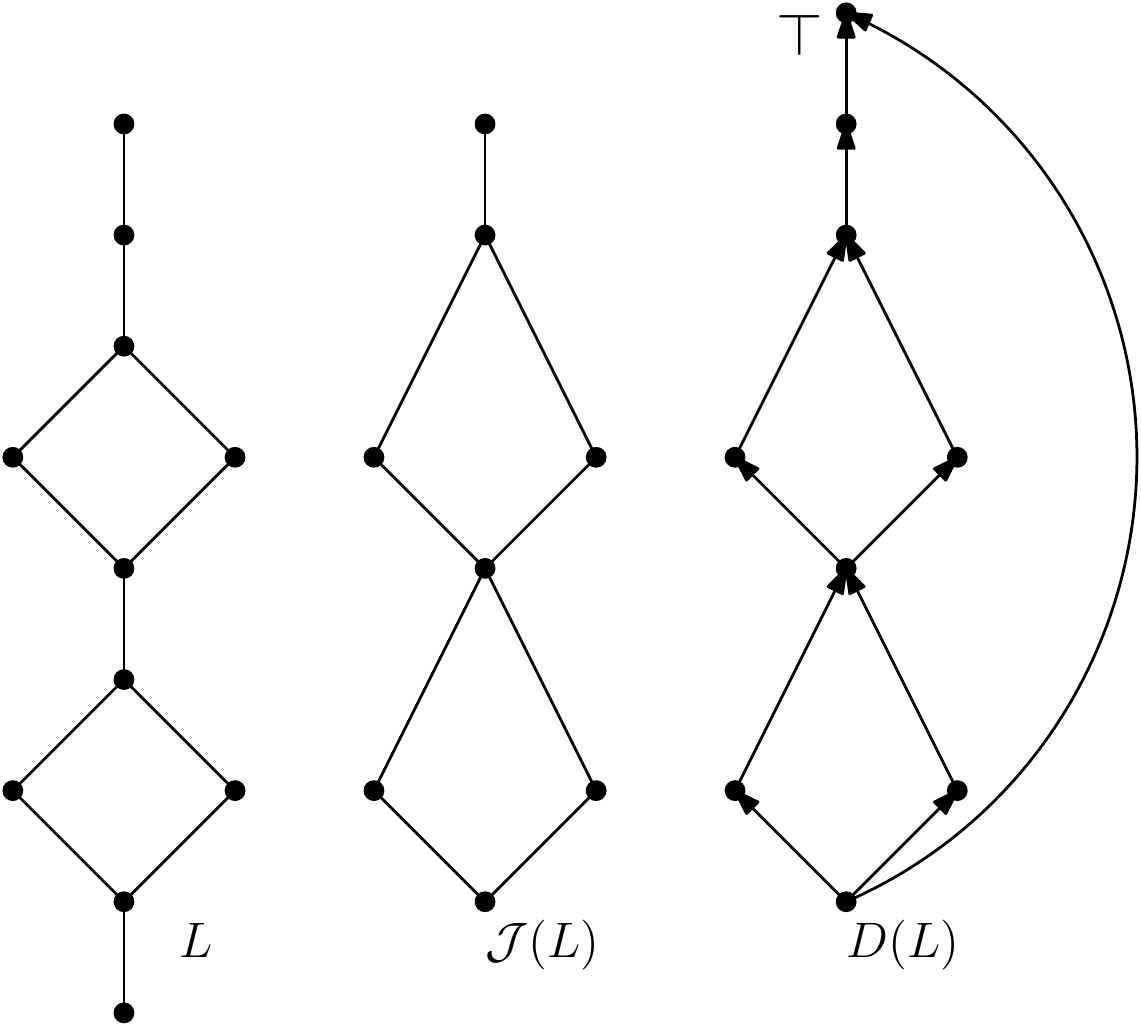}
\vspace{3mm}
\caption{A distributive lattice $L$ represented by its Hasse diagram (left), the corresponding subposet of join-irreducible elements $\cJ(L)$ (middle), and the digraph $D(L)$ associated with the lattice (right).}
\label{fig:birkhoff}
\label{poset}
\end{figure}

The following theorem is an easy consequence of Birkhoff's Theorem.

\begin{theorem}[\cite{K07b}]
\label{thm:universal}
Let $L$ be any distributive lattice and $D(L)$ be the corresponding digraph as defined above. Then $L \cong L_c(D(L))$.
\end{theorem}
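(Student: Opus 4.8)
The plan is to combine the explicit description of $D(L)$ with Birkhoff's theorem. Since Birkhoff's theorem already gives $L \cong \cO(\cJ(L))$, the lattice of downsets of the poset of join‑irreducibles, it suffices to construct a lattice isomorphism $\Phi\colon \cO(\cJ(L)) \to L_c(D(L))$. Before doing so I would record the two facts needed to invoke Theorem~\ref{thm:DL}: $D(L)$ is acyclic, because every arc added to the upward‑oriented Hasse diagram of $\cJ(L)$ enters $\top$, so $\top$ is a global sink and no directed cycle can pass through it; and $G(L)$ is connected, because every join‑irreducible lies on a saturated chain of $\cJ(L)$ between a minimal and a maximal element, each of which is adjacent to $\top$. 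I would also note that $D(L)$ is the \emph{minimum} of $L_c(D(L))$: no vertex of $\cJ(L)$ is a sink of $D(L)$ (a non‑maximal element has an outgoing Hasse arc, a maximal element has the outgoing arc to $\top$), so no vertex flip at a sink is available, i.e.\ one cannot move down from $D(L)$. Finally, since $D(L)$ is acyclic, every $c$‑orientation in its class is acyclic — a cycle $C$ that were directed in some $c$‑orientation would have forward‑count $0$ or $|C|$, values that $D(L)$ avoids — so $L_c(D(L))$ is exactly the distributive lattice on this class provided by Theorem~\ref{thm:DL}.

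Next I would define $\Phi$ directly on orientations. For a downset $I \subseteq \cJ(L)$ let $\partial I$ be the set of edges of $G(L)$ with exactly one endpoint in $I$. Since $I$ is a downset, every such edge points \emph{out of} $I$ in $D(L)$: a Hasse arc $u\to v$ with $v\in I$ forces $u\in I$, and the only other edges of $G(L)$ are the arcs from $\cJ(L)$ to $\top$. Hence $\partial I = \delta^+(I)$ is a directed cut of $D(L)$, and I set $\Phi(I)$ to be the orientation obtained from $D(L)$ by reversing all edges of $\partial I$. Reversing a directed cut leaves the forward‑count of every cycle unchanged (a cycle meets a cut in equally many edges of each orientation), and the reversal of a directed cut of an acyclic digraph is again acyclic; thus $\Phi(I)$ lies in the class underlying $L_c(D(L))$. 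The map is injective because $I$ is recovered from $\Phi(I)$: the set of edges reversed relative to $D(L)$ is exactly $\partial I$, and $v \in I$ iff every path in $G(L)$ from $v$ to $\top$ crosses this cut an odd number of times.

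The core of the argument is a local claim about the three kinds of neighbours a vertex $v\in\cJ(L)$ can have in $G(L)$ — its lower covers, its upper covers, and, when $v$ is minimal and/or maximal in $\cJ(L)$, the vertex $\top$: namely, $v$ is a source of $\Phi(I)$ if and only if $v$ is a minimal element of $\cJ(L)\setminus I$, and in that case flipping $v$ in $\Phi(I)$ produces $\Phi(I\cup\{v\})$. For the forward direction, when $v$ is minimal in $\cJ(L)\setminus I$ each lower cover of $v$ lies in $I$ (downset), so its arc into $v$ in $D(L)$ has been reversed to point out of $v$, whereas each upper cover of $v$ and $\top$ lie outside $I$ (again since $I$ is a downset), so their arcs out of $v$ are untouched — hence $v$ is a source; the identity $\partial(I\cup\{v\}) = \partial I \mathbin{\triangle} (\text{star of }v)$ then shows that flipping $v$ after the cut reversal $\partial I$ realises the cut reversal $\partial(I\cup\{v\})$. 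The converse is the same case analysis read backwards. Granting the claim, $\Phi$ sends cover relations of $\cO(\cJ(L))$ to cover relations of $L_c(D(L))$, and conversely every source flip taking an element of the image $\Phi(\cO(\cJ(L)))$ upward stays in the image and corresponds to adjoining to the downset a minimal element of its complement. Since $\Phi(\emptyset) = D(L)$ is the minimum of $L_c(D(L))$ and every element of a finite lattice is joined to the minimum by a saturated chain of covers, an induction along such chains shows that $\Phi$ is surjective; the two directions of the cover correspondence show that both $\Phi$ and $\Phi^{-1}$ are order‑preserving, so $\Phi$ is a lattice isomorphism. Composing with the Birkhoff isomorphism gives $L \cong \cO(\cJ(L)) \cong L_c(D(L))$.

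The main obstacle is the bookkeeping in the local claim — matching the downset condition on $I$ with the source condition on $v$ in terms of lower covers, upper covers, and the distinguished vertex $\top$ — and this is precisely where the particular definition of $D(L)$, adding arcs to $\top$ from \emph{both} the minimal and the maximal elements of $\cJ(L)$, is used; the one slightly awkward sub‑case is an isolated join‑irreducible, which contributes two parallel arcs to $\top$ and must be checked to behave consistently in both the forward and the converse half of the claim.
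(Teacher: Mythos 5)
Your argument is correct, and it matches the paper's stated approach: the paper gives no proof here (it defers to~\cite{K07b}) but explicitly remarks that the theorem is ``an easy consequence of Birkhoff's Theorem,'' which is exactly what you carry out by composing Birkhoff's isomorphism $L \cong \cO(\cJ(L))$ with the explicit cut-reversal map $\Phi$ onto $L_c(D(L))$. The local claim (sources of $\Phi(I)$ are precisely the minimal elements of $\cJ(L)\setminus I$, and flipping one realises the cut $\partial(I\cup\{v\})$) together with $\Phi(\emptyset)=D(L)$ being the lattice minimum is the right way to upgrade a cover-graph bijection to a lattice isomorphism, and your checks that $D(L)$ is acyclic, $G(L)$ is connected, and cut reversals preserve $c$ are all sound.
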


Theorem~\ref{thm:embedding} below gives a natural geometric embedding of the lattice $L$ depending on the digraph $D(L)$.
This embedding is such that all values $z_X(x)$ are 0 or 1, and the vectors $z_X(.)$ are exactly the characteristic vectors of the downsets of $\cJ(L)$.
The convex hull of those vectors is known as the \emph{order polytope} of $\cJ(L)$~\cite{S86}, which is a particular case of the above-mentioned alcoved polytopes.
The problem of computing vertex flip distances between elements of $L$ encoded by $c$-orientations of $G(L)$ therefore boils down to computing the distance between two downsets of $\cJ(L)$ in their inclusion lattice, which is a simple special case of Problem~\ref{pb:vertexflip}.

\paragraph{Facial flips in planar graphs.}

When we consider Problem~\ref{pb:vertexflip} on planar graphs, restricting to vertex flips and considering the dual plane graph amounts to considering only flips of directed facial cycles, excluding the outer face whose dual vertex is $\top$.
We refer to these as \emph{facial flips}.
Felsner~\cite{F04} considered distributive lattices induced by facial flips.
The following computational problem is a special case of Problem~\ref{pb:vertexflip}.

\begin{problem}
\label{pb:faceflip}
Given a 2-connected plane graph $G$ and a pair $X,Y$ of strongly connected $\alpha$-orientations, what is the length of a shortest facial flip sequence transforming $X$ into $Y$?
\end{problem}

Zhang, Qian, and Zhang~\cite{ZQZ19} recently provided a closed formula for this flip distance, which can be turned into a polynomial-time algorithm.
We prove the analogous stronger statement for Problem~\ref{pb:vertexflip}.

\begin{theorem}
\label{thm:polynomial}
There is a polynomial-time algorithm that, given a graph $G$ with a fixed vertex $\top$ and a pair $X,Y$ of $c$-orientations of $G$, outputs a shortest vertex flip sequence between $X$ and $Y$.
\end{theorem}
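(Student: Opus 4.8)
The plan is to attach to each $c$-orientation a \emph{potential} (an integer height function on the vertices, normalized to vanish at $\top$) such that a single vertex flip changes this potential in exactly one coordinate. Then the $\ell_1$-distance between the potentials of $X$ and $Y$ is simultaneously a lower bound for the flip distance (by a telescoping argument) and achievable by an explicit two-phase greedy procedure.

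\emph{Setup.} Fix arbitrary reference orientations of the edges of $G$ to identify edge-indexed vectors with elements of $\mathbb{Z}^{E(G)}$, and let $B$ be the corresponding incidence matrix. For $c$-orientations $Z,W$ and an edge $e=ab$, let $\sigma_{Z\to W}(e;a,b)$ be $+1$ if $Z$ orients $e$ as $a\to b$ while $W$ orients it as $b\to a$, be $-1$ in the opposite case, and be $0$ if $Z$ and $W$ agree on $e$ (so $\sigma$ is antisymmetric in $a,b$). First I would show that, because $Z$ and $W$ are both $c$-orientations, for every cycle $C$ with its chosen traversal the signed edge-vector of $Z\,\triangle\,W$ sums to zero along $C$; this is a direct rewriting of the definition of a $c$-orientation, and is the one place where the hypothesis on $Z,W$ is used. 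Hence this vector is orthogonal to the cycle space of $G$, so it equals $B^{\top}p$ for some integer vector $p$, and normalizing $p(\top)=0$ makes $p$ unique since $G$ is connected. Denote this vector by $p_{Z\to W}$; equivalently $p_{Z\to W}(a)-p_{Z\to W}(b)=\sigma_{Z\to W}(e;a,b)$ for every edge $e=ab$, and $p_{Z\to W}(\top)=0$. Two consequences: (i) $|p_{Z\to W}(v)|\le\operatorname{dist}_G(v,\top)\le n-1$, since consecutive values along any path differ by at most $1$; and (ii) if $Z'$ is obtained from $Z$ by flipping a vertex $w\ne\top$, then $Z'\,\triangle\,W=(Z\,\triangle\,W)\,\triangle\,\delta(w)$, so $p_{Z'\to W}=p_{Z\to W}\pm e_w$: a source flip at $w$ decreases $p_{Z\to W}(w)$ by $1$, a sink flip at $w$ increases it by $1$, and all other coordinates are unchanged. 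Define $\Phi(Z):=\sum_{v}|p_{Z\to Y}(v)|$; then $\Phi(Y)=0$ and, by (i), $\Phi(X)\le n(n-1)$.

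\emph{Lower bound and algorithm.} Along any vertex-flip sequence $X=Z_0,Z_1,\dots,Z_\ell=Y$, each $Z_i$ is a $c$-orientation (vertex flips preserve this), so $\Phi(Z_i)$ is defined and, by (ii), $|\Phi(Z_{i+1})-\Phi(Z_i)|\le 1$; hence $\ell\ge\Phi(Z_0)-\Phi(Z_\ell)=\Phi(X)$. For the algorithm, compute $p:=p_{X\to Y}$ in polynomial time by a breadth-first search from $\top$ propagating the constraints $p(a)-p(b)=\sigma_{X\to Y}(e;a,b)$, followed by a consistency check on all edges. Then run two phases. \emph{Phase 1:} while $p_{Z\to Y}$ has a negative coordinate, choose any $w\ne\top$ that is a sink of the current orientation $Z$ with $p_{Z\to Y}(w)<0$, and flip it. Such a $w$ exists: if no sink $w\ne\top$ had $p_{Z\to Y}(w)<0$, let $U$ be the set of vertices attaining $\min_v p_{Z\to Y}(v)<0$ (so $\top\notin U$) and let $w$ be a sink of the acyclic digraph $Z[U]$; then any out-edge of $w$ leaving $U$ would force $\sigma_{Z\to Y}=-1$, contradicting its orientation, and $w$ has no out-edge inside $U$, so $w$ is a sink of $Z$ with $p_{Z\to Y}(w)<0$ — a contradiction. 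Each Phase-1 flip decreases $\Phi$ by exactly $1$ (since $p_{Z\to Y}(w)\le-1$), so Phase 1 terminates with $p_{Z\to Y}\ge0$. \emph{Phase 2:} while $Z\ne Y$, choose any $w\ne\top$ that is a source of $Z$ with $p_{Z\to Y}(w)>0$, and flip it. A symmetric extremal-set argument (with $U$ the set of coordinates attaining the positive maximum of $p_{Z\to Y}$, and $w$ a source of $Z[U]$) shows such a $w$ exists whenever $p_{Z\to Y}\ne0$, i.e.\ whenever $Z\ne Y$; and the invariant $p_{Z\to Y}\ge0$ is preserved. Each Phase-2 flip decreases $\Phi$ by exactly $1$.

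\emph{Correctness and running time.} Flipping a sink (resp.\ source) keeps the orientation acyclic and preserves the $c$-orientation property, so all intermediate orientations are legitimate elements of the ground set. Since $\Phi$ drops by exactly $1$ at each step and starts at $\Phi(X)$, the produced sequence has length exactly $\Phi(X)$, matching the lower bound; hence it is a shortest vertex-flip sequence. Its length is at most $n(n-1)$ and each step costs $O(n+m)$ (scanning for a sink/source with the correct sign of $p_{Z\to Y}$), so the whole algorithm is polynomial. (In the language of Theorem~\ref{thm:DL}, Phase 1 walks down from $X$ to $X\wedge Y$ and Phase 2 walks up from $X\wedge Y$ to $Y$ in the distributive lattice $\mathcal{R}_c$, but this interpretation is not needed for the proof.) The analogous statement for Problem~\ref{pb:faceflip}, recovering the formula of Zhang, Qian, and Zhang, then follows by planar duality.

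\emph{Main obstacle.} The conceptual core is getting the potential right: establishing that the difference-constraint system $p(a)-p(b)=\sigma_{Z\to W}(e;a,b)$ is consistent — which is exactly where "$Z,W$ are $c$-orientations" enters, via cycle-space/cut-space orthogonality — together with the two extremal-set arguments certifying that Phases 1 and 2 never get stuck before reaching their targets. Once the potential is in place, the telescoping lower bound, the $O(n^2)$ diameter bound, and the translation of the phases into an algorithm are routine.
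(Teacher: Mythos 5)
Your proof is correct, and it takes a genuinely different route from the paper's. The paper's argument proceeds via Lemma~\ref{lem:laminar}: it decomposes $X\setminus Y$ into a laminar family of disjoint minimal dicuts, builds the containment poset on their interiors, assigns a signed weight $\sgn(S)\cdot w(S)$ to each dicut by a recursion down the forest, and then constructs a monotone flip sequence by induction, repeatedly peeling off a minimal dicut via Lemma~\ref{lem:flip-int} and reducing redundant opposite-direction flips. Optimality of the resulting sequence is then delegated to Corollary~\ref{cor:monotone}, which in turn cites the lattice embedding theorem of Propp/Felsner--Knauer (Theorem~\ref{thm:embedding}). Your proof instead re-derives the needed part of that embedding from scratch: you build the integer potential $p_{Z\to W}$ directly from the orthogonality of $\sigma_{Z\to W}$ to the cycle space (where the $c$-orientation hypothesis enters), observe that a vertex flip moves exactly one coordinate by $\pm1$, and use $\Phi(Z)=\lVert p_{Z\to Y}\rVert_1$ simultaneously as a telescoping lower bound and as a decreasing potential for a two-phase greedy algorithm (sinks with $p<0$, then sources with $p>0$). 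The extremal-set argument that the greedy step never gets stuck replaces the paper's laminar/poset machinery entirely. What each approach buys: the paper's laminar decomposition yields additional structural information (the explicit per-vertex flip count $w(S_x)$ read off a forest, which is in the spirit of the Zhang--Qian--Zhang formula), whereas your route is more self-contained (no black-box appeal to the embedding theorem), more obviously an algorithm, and arguably gives a slightly better running time ($O(n^2(n+m))$ vs.\ the paper's stated $O(m^3)$). One presentational gap: you rely on acyclicity of $Z$ when taking a sink of $Z[U]$; this is safe because the paper reduces to acyclic $c$-orientations by contracting directed cycles before Problem~\ref{pb:vertexflip} is stated, but your write-up should say so explicitly rather than assuming it silently.
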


In the planar case, this directly translates to a polynomial-time algorithm for Problem~\ref{pb:faceflip}.
The proof of Theorem~\ref{thm:polynomial} is presented in Section~\ref{sec:lattice}.
In~\cite{KF09}, the distributive lattice structure on $c$-orientations is generalized to so-called \emph{$\Delta$-bonds}, also known as \emph{tensions}. We believe that our proof of Theorem~\ref{thm:polynomial} can be generalized to these objects.

\paragraph{Flip distance with larger cut sets.}

While computing the cut flip distance between $c$-orientations is an \NP-hard problem in general (Theorem~\ref{thm:hardness}), there is a polynomial-time-algorithm for computing the distance when only using vertex flips (Theorem~\ref{thm:polynomial}).
It is natural to ask for a threshold between the hard and easy cases of flip distance problems.
Our hardness reduction in Section~\ref{sec:pm} involves very long directed cycles, which correspond to flips of directed cuts in the dual $c$-orientations with cut sets of large size.
Consequently, one may hope that the problem gets easier when restricting the sizes of the cut sets involved in a flip sequence.
Our last result destroys this hope:

\begin{theorem}
\label{thm:2flips}
Let $X,Y$ be $c$-orientations of a connected graph $G$ with fixed vertex $\top$.
It is \NP-hard to determine the length of a shortest cut flip sequence transforming $X$ into $Y$, which consists only of minimal directed cuts with interiors of size at most two.
\end{theorem}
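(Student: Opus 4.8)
The plan is to reduce from computing the \emph{jump number} $s(P)$ of a poset $P$ of height two, which is \NP-hard. Here $s(P)$ is the least number of \emph{jumps} (consecutive incomparable pairs) in a linear extension of $P$, so that the maximum number of \emph{bumps} (consecutive cover pairs) over all linear extensions equals $n-1-s(P)$, where $n:=|P|$. Given such a $P$, with minimal elements $A$ and maximal elements $B$ (so $A\cup B=P$), I would apply Theorem~\ref{thm:universal} to $L:=\cO(P)$: let $D:=D(L)$ be the digraph on vertex set $P\cup\{\top\}$ formed from the upward-oriented Hasse diagram of $P$ together with an arc $p\to\top$ for every $p\in P$, let $G:=G(L)$ be its underlying graph, and let $\cR_c=L_c(D)$. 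This $D$ has $n+1$ vertices and is built from $P$ in polynomial time, without ever constructing the exponentially large lattice $L$. By Theorems~\ref{thm:universal} and~\ref{thm:embedding} the elements of $\cR_c$ correspond to the downsets of $P$; take $X$ to correspond to $\emptyset$ (the reference $D$) and $Y$ to correspond to $P$ (the lattice top), and recall that flipping a source $\ne\top$ adds a minimal element of the complement of the current downset, while flipping a sink $\ne\top$ deletes a maximal element of it. I claim the shortest flip sequence from $X$ to $Y$ using only minimal directed cuts of interior size at most two has length exactly $s(P)+1$; since $s(P)$ is \NP-hard to compute, this proves the theorem.

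\emph{Admissible moves.} A minimal directed cut with singleton interior $\{v\}$, $v\ne\top$, is $\delta(v)$ for a source or a sink, i.e.\ a single-vertex flip, adding or removing one element of the current downset $\mathcal D$. For a minimal directed cut with interior $U=\{u,v\}$, $\top\notin U$, minimality forces $G[U]$ connected, hence $uv\in E(G)$; since inside $P$ the edges of $G$ are exactly the covers and $\top\notin U$, this means $u\lessdot v$ (say). A short case analysis on $\mathcal D$ shows $\delta(U)$ is a directed cut in exactly two situations: $u,v\notin\mathcal D$ with $u$ minimal in $P\setminus\mathcal D$ and every lower cover of $v$ other than $u$ contained in $\mathcal D$ (flipping then yields $\mathcal D\cup\{u,v\}$), or the symmetric downward configuration (flipping then yields $\mathcal D\setminus\{u,v\}$). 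So a double-vertex flip adds or removes a \emph{cover pair} of $P$ in one move; an incomparable pair never gives an admissible move, its interior being disconnected; and since $P$ has height two every $p\in P$ is adjacent to $\top$, so $G[V(G)\setminus U]$ is connected for every two-element $U\subseteq P$, whence \emph{every} cover pair does yield admissible flips. (Both facts rely on the height-two hypothesis, as does the distance formula below.)

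\emph{The distance equals $s(P)+1$.} For the upper bound, fix a linear extension $v_1,\dots,v_n$ with $s(P)$ jumps; as $P$ has height two, its $n-1-s(P)$ bumps occupy pairwise non-consecutive positions, hence are disjoint consecutive cover pairs. Process $v_1,\dots,v_n$ from left to right — the double-vertex flip on $\{v_i,v_{i+1}\}$ at each bump position $i$, the single-vertex flip on $v_j$ otherwise — so that after $k$ steps the current downset is $\{v_1,\dots,v_k\}$; the conditions above (which depend only on this prefix) are then met at each step, so every move is admissible, and $Y$ is reached in $(n-1-s(P))+\bigl(n-2(n-1-s(P))\bigr)=s(P)+1$ moves. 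For the lower bound, let $X=Z_0,\dots,Z_m=Y$ be an arbitrary admissible sequence with downsets $\emptyset=\mathcal D_0,\dots,\mathcal D_m=P$; for $p\in P$ let $t(p)$ be the last step that adds $p$ and after which $p$ is never removed. Since the $\mathcal D_i$ are downsets, $p<q$ implies $t(p)\le t(q)$, and since one move adds at most one element or one cover pair, at most two elements share any value of $t$, a coinciding pair necessarily being a cover pair. Ordering $P$ by $t$ (ties within a cover pair broken by the order of $P$) gives a linear extension $\pi$; each step realising two equal values of $t$ creates a distinct bump of $\pi$, so the number $d$ of such steps satisfies $d\le n-1-s(P)$, and $m$ is at least the number of distinct steps realising some $t(p)$, namely $n-d\ge n-(n-1-s(P))=s(P)+1$. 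Combining the two bounds proves the claim.

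The step I expect to be the main obstacle is the case analysis characterising which two-vertex directed cuts are admissible in an arbitrary intermediate $c$-orientation: verifying that incomparable interiors are genuinely excluded, that the complementary side stays connected (this is exactly where the height-two input is essential), and that the two directed possibilities correspond precisely to adding, respectively removing, a cover pair. Once this is in place, both the good flip sequence built from a jump-minimal linear extension and the matching lower bound — obtained by tracking, for each element, the last time it is added, which sidesteps having to prove that some shortest sequence is monotone — are essentially bookkeeping.
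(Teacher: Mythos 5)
Your reduction, gadget, and target identity $d=s(P)+1$ are the same as the paper's: both reduce from the Jump Number Problem on height-two posets, both build a graph on $P\cup\{\top\}$ with upward Hasse edges plus edges from every element of $P$ to $\top$, and both take $X,Y$ to be the bottom and top of the resulting lattice. The upper bound (group a jump-minimal linear extension into singletons and bump pairs and flip them in order) is also the same. Where you differ is the lower bound. The paper first normalizes the shortest flip sequence: it takes a shortest sequence additionally minimizing $\sum_i|B_i|$, runs an exchange argument to show the cut sets $B_1,\dots,B_m$ are pairwise disjoint, and then reads a linear extension directly off that disjoint sequence. You instead skip the normalization entirely: you define $t(p)$ as the last step at which $p$ is added (and kept), observe that $t$ is order-preserving, that at most two elements share a value of $t$ and such a pair is a cover pair, and count distinct $t$-values to conclude $m\ge n-d\ge s(P)+1$. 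This sidesteps the need to argue that some optimal sequence is ``clean,'' which is the one mildly delicate step in the paper's version, at the cost of a small potential-function-style bookkeeping argument. Your approach also has the virtue of making explicit something the paper leaves implicit: that a two-element interior of a minimal dicut in $G'$ must induce a connected subgraph and hence be a cover pair of $P$ (this is tacitly used when the paper ``orders the elements within each $B_i$ according to their order in $P$''). Both lower-bound arguments are correct; yours is a genuine, slightly cleaner variant of the same proof strategy.
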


We will present the proof of Theorem~\ref{thm:2flips} in Section~\ref{sec:jump}.

\section{Preliminaries}

In this section we recall some standard terminology concerning digraphs and posets that will be used repeatedly in the paper.

\paragraph{Cuts and cut sets.}
Given a directed graph $D$ and a subset $U \subseteq V(D)$ of vertices, we denote by $\delta(U)$ the set of all arcs in $E(D)$ having one end in $U$ and the other in $\overline{U}:=V(D) \setminus U$.
By $\delta^+(U)$, we denote the set of all arcs in $E(D)$ directed from $U$ to $\overline{U}$.
If $\delta(U)=\delta^+(U)$, we call $S=\delta(U)$ a \emph{directed cut} or \emph{dicut} induced by $U$, and $U$ is referred to as a \emph{cut set} of $S$.
In the case that $D$ is weakly connected, the cut set is uniquely determined by the dicut.

\paragraph{Posets and lattices.}
A \emph{partially ordered set}, or \emph{poset} for short, is a pair $(P,\prec)$, where $P$ is a set and $\prec$ is a reflexive, antisymmetric and transitive binary relation on $P$.
Posets can be represented more compactly by their minimal comparabilities: We say that $x\prec y$ is a \emph{cover relation}, or $y$ \emph{covers} $x$, if there is no $z$ in the poset with $x \prec z \prec y$.
This defines the \emph{cover graph} of $P$, which has the elements of $P$ as vertices, and an edge for every cover relation.
The \emph{Hasse diagram} of $P$ is a drawing of the cover graph in the plane, where vertices are represented by distinct points and for every cover relation $x\prec y$, the edge between $x$ and $y$ is drawn as a straight line going upwards from $x$ to $y$.

The \emph{downset} of an element $y$ in $P$ is the set of all $x$ with $x\prec y$.
A poset $P$ is called a \emph{lattice}, if for any two elements $x$ and $y$ in $P$ there is a unique smallest element $z$ such that $x\prec z$ and $y\prec z$, and a unique largest element $z$ such that $z\prec x$ and $z\prec y$.
These elements are called the \emph{join} and the \emph{meet} of $x$ and $y$, respectively.
A lattice is called \emph{distributive}, if the join and meet operations distribute over each other.

\section{Flip distance between perfect matchings and between $\alpha$-orientations}
\label{sec:pm}

The proof of Theorem~\ref{thm:hardness} is by reduction from the following \NP-complete problem.

\begin{theorem}[Ples\'{n}ik~\cite{P79}]
\label{thm:ples}
Deciding directed Hamiltonicity of orientations of cubic planar graphs without sinks and sources is \NP-complete.
\end{theorem}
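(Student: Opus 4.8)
Membership in \NP\ is routine: a directed Hamiltonian cycle is certified by the cyclic order of its vertices, which is checked in polynomial time. For hardness I would reduce from directed Hamiltonicity in planar digraphs, and I would first argue that this latter problem is \NP-complete. Given a cubic planar graph $G$ with $|V(G)|\ge 3$, form the planar digraph $\overleftrightarrow{G}$ by replacing every edge by two oppositely oriented arcs. A directed Hamiltonian cycle of $\overleftrightarrow{G}$ cannot use both arcs of an antiparallel pair (that would make some vertex its own successor and predecessor, forcing the cycle to have length $2$), so it projects onto an undirected Hamiltonian cycle of $G$; conversely any undirected Hamiltonian cycle of $G$ lifts to a directed one. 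Since Hamiltonicity of cubic planar graphs is \NP-complete by the theorem of Garey, Johnson and Tarjan, so is directed Hamiltonicity of planar digraphs. Hence I may start from a planar digraph $D_0$ in which every vertex has in-degree $3$ and out-degree $3$ (so $D_0$ has no sinks and no sources), and the task becomes to produce, in polynomial time, an orientation of a \emph{cubic} planar graph, still with no sinks or sources, having the same Hamiltonicity status as $D_0$.

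\textbf{The construction.} I would replace each vertex $w$ of $D_0$, together with the cyclic arrangement of its three in-arcs and three out-arcs prescribed by the embedding, by a planar directed gadget $\Gamma$ with three ``in-terminals'' and three ``out-terminals'' appearing on its outer boundary in that cyclic order. I would assemble $\Gamma$ from constantly many small fixed pieces, each a constant-size digraph in which every vertex has either a unique incoming arc or a unique outgoing arc, arranged inside a disk, and I would want $\Gamma$ to satisfy: (i) every internal vertex of $\Gamma$ has in-degree plus out-degree equal to $3$, the smaller of the two being $1$, so that once all gadgets are wired together every vertex of the final digraph has degree exactly $3$ and $\Gamma$ contains no sink or source; (ii) in every directed Hamiltonian cycle $H$ of the final digraph, $H\cap\Gamma$ is a single directed path that enters $\Gamma$ at exactly one in-terminal, leaves at exactly one out-terminal, and visits every internal vertex of $\Gamma$; and (iii) conversely, for every choice of one in-terminal and one out-terminal there is a directed Hamiltonian path of $\Gamma$ between them, compatible with the way the terminals are joined to the rest of the construction. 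Properties (ii) and (iii) make $\Gamma$ emulate the vertex $w$ exactly: ``arrive on a single in-arc, leave on a single out-arc.'' I would then wire the gadgets by turning each arc $u\to v$ of $D_0$ into one arc from an out-terminal of the gadget of $u$ to an in-terminal of the gadget of $v$. The resulting digraph $D$ is an orientation of a cubic planar graph with no sinks or sources, computable from $D_0$ in polynomial time; using (ii) to pass from a Hamiltonian cycle of $D$ to one of $D_0$ and (iii) to pass back, one gets that $D$ is Hamiltonian iff $D_0$ is iff $G$ is. (If the given $D_0$ is not strongly connected, or has a sink or source, it has no directed Hamiltonian cycle, and I would simply output a fixed negative instance, e.g.\ a strongly connected orientation of a $3$-connected cubic planar non-Hamiltonian graph.)

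\textbf{The hard part.} The crux is the gadget $\Gamma$ and, above all, the proof of (ii): one must exclude a Hamiltonian cycle that enters $\Gamma$ at one terminal, ``cuts a corner'' and skips some internal vertices, and then returns later through a second terminal. The intended remedy is exactly the design choice that every vertex of $\Gamma$ carries a forced arc (its unique in- or out-arc), hence an arc lying in every Hamiltonian cycle; this rigidity should pin down the route through $\Gamma$ up to the single remaining choice of exit terminal, after which a careful but routine case analysis yields (ii) and (iii). What remains is bookkeeping: realizing the prescribed cyclic order of the six terminals while keeping $\Gamma$ planar, which the disk layout handles; and checking at every interface arc that no degree-$3$ vertex becomes a sink or a source, which is immediate from (i) together with the fact that each terminal piece retains positive in-degree and out-degree.
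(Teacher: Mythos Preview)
The paper does not prove this statement at all: Theorem~\ref{thm:ples} is quoted as a result of Ples\'{n}ik and used as a black box in the reduction for Theorem~\ref{thm:hardness}. So there is no ``paper's own proof'' to match; anything you write here is an attempt to reprove a cited theorem.

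As to your outline, the overall strategy is standard and plausible: start from undirected Hamiltonicity of cubic planar graphs (Garey--Johnson--Tarjan), pass to the bidirected graph $\overleftrightarrow{G}$, and then blow up each degree-$6$ vertex into a planar cubic gadget $\Gamma$ that simulates ``one arc in, one arc out''. The membership argument and the equivalence between Hamiltonian cycles of $G$ and of $\overleftrightarrow{G}$ are fine.

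The genuine gap is that you never construct $\Gamma$. You list the properties (i)--(iii) it must have, you say you ``would assemble $\Gamma$ from constantly many small fixed pieces'', and you correctly identify that the exclusion of multi-entry traversals in (ii) is the crux---but you do not exhibit the gadget, and the existence of a planar, cubic, sink/source-free gadget with a Hamiltonian path between \emph{every} in/out terminal pair and \emph{no} spurious multi-path traversals is precisely the content of the theorem. Phrases like ``should pin down the route'' and ``a careful but routine case analysis yields (ii) and (iii)'' are not a proof; this case analysis is where all the work lives, and it is not routine until one has a concrete $\Gamma$ in hand. Ples\'{n}ik's paper supplies exactly such explicit gadgets (with the accompanying case check), and that is what your write-up is missing. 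If you want a self-contained proof, you must either reproduce a concrete $\Gamma$ and verify (ii)--(iii) for it, or cite Ples\'{n}ik as the present paper does.
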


It is clear that the above problem remains \NP-complete if we additionally assume 2-connectivity of the cubic graph.

\begin{proof}[Proof of Theorem~\ref{thm:hardness}]
As each flip sequence of length at most two can be used as a polynomially verifiable certificate, the problem is clearly in \NP.

We now provide a reduction of the decision problem in Theorem~\ref{thm:ples} to Problem~\ref{pb:bipmatchingdist}.
So suppose we are given an orientation $D$ of a 2-connected cubic planar graph without sinks and sources, and assume without loss of generality that $|V(D)|\ge 3$.
Given $D$, we define an undirected graph $G=G(D)$ as follows; see Figure~\ref{fig:gadgets}:
For each vertex $v\in V(D)$ we create a vertex $x_v$ in $G$, and for each arc $e \in E(D)$ we create a pair of vertices $x^+_e,x^-_e$ in $G$.
The edges of $G$ are defined as follows:
For each arc $e\in E(D)$, we connect $x^+_e$ and $x^-_e$ with an edge in $G$.
Furthermore, we denote by $V_1$ and $V_2$ the vertices of $D$ with outdegree 1 or 2, respectively.
For each  $v\in V_1$, if $e,f\in E(D)$ are the two incoming arcs at $v$ and $g$ is the outgoing arc, then we add the edges $x^+_e x_v$, $x^+_f x_v$, $x^+_e x^-_g$, and $x^+_f x^-_g$ to $G$.
Similarly, for each $v\in V_2$, if $e,f\in E(D)$ are the two outgoing arcs at $v$ and $g$ is the incoming arc, then we add the edges $x^-_ex_v$, $x^-_f x_v$, $x^-_e x^+_g$, and $x^-_f x^+_g$ to $G$.
We refer to the 4-cycles in $G$ formed by these edges as \emph{$C_4$-gadgets}.
Note that $G$ is subcubic, planar, 2-connected, and bipartite.
Specifically, the bipartition is given by $\{x_v\mid v\in V_1\}\cup\{x^-_e\mid e\in E(D)\}$ and $\{x_v\mid v\in V_2\}\cup\{x^+_e\mid e\in E(D)\}$.

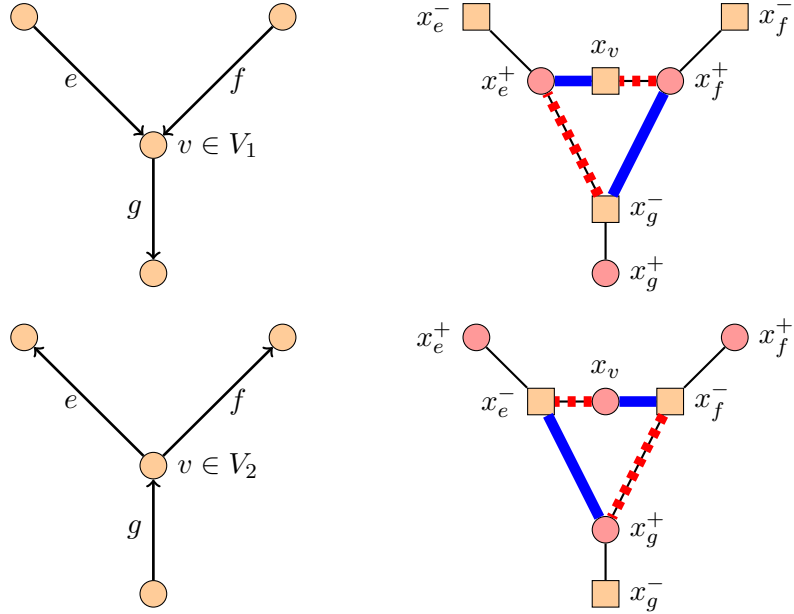
\begin{figure}[h]
\begin{center}
\begin{tikzpicture}[scale=.85]
  \begin{scope}[yshift=5cm]
    \foreach \pos/\name in {{(1,5)/a},{(3,1)/b},{(5,5)/c}}
        \node[vertex] (\name) at \pos {};
    \node[vertex, label=right:{$v\in V_1$}] (d) at (3,3) {};
    \path[oriented edge] (a) -- node[anchor=east] {$e$} (d);
    \path[oriented edge] (c) -- node[anchor=west] {$f$} (d);
    \path[oriented edge] (d) -- node[anchor=east] {$g$} (b);
  \end{scope}
  \begin{scope}
    \foreach \pos/\name in {{(1,5)/a},{(3,1)/b},{(5,5)/c}}
        \node[vertex] (\name) at \pos {};
    \node[vertex, label=right:{$v\in V_2$}] (d) at (3,3) {};
    \path[oriented edge] (d) -- node[anchor=east] {$e$} (a);
    \path[oriented edge] (d) -- node[anchor=west] {$f$} (c);
    \path[oriented edge] (b) -- node[anchor=east] {$g$} (d);    
  \end{scope}
  \begin{scope}[xshift=7cm,yshift=5cm]
    \node[even vertex, label=left:{$x^+_e$}] (e) at (2,4) {};
    \node[even vertex, label=right:{$x^+_f$}] (g) at (4,4) {};
    \node[even vertex, label=right:{$x^+_g$}] (b) at (3,1) {};
    \node[odd vertex, label=left:{$x^-_e$}] (a) at (1,5) {};
    \node[odd vertex, label=right:{$x^-_f$}] (c) at (5,5) {};
    \node[odd vertex, label=right:{$x^-_g$}] (d) at (3,2) {};
    \node[odd vertex, label=above:{$x_v$}] (f) at (3,4) {};    
    \foreach \dest / \source in {a/e,e/d,d/b,e/f,f/g,d/g,g/c}
        \path[edge] (\source) -- (\dest);
    \foreach \dest / \source in {e/f,d/g}
        \path[matched edge] (\source) -- (\dest);
    \foreach \dest / \source in {f/g,e/d}
        \path[other matched edge] (\source) -- (\dest);
  \end{scope}
  \begin{scope}[xshift=7cm]
    \node[even vertex, label=left:{$x^+_e$}] (a) at (1,5) {};
    \node[even vertex, label=right:{$x^+_f$}] (c) at (5,5) {};
    \node[even vertex, label=right:{$x^+_g$}] (d) at (3,2) {};
    \node[even vertex, label=above:{$x_v$}] (f) at (3,4) {};
    \node[odd vertex, label=left:{$x^-_e$}] (e) at (2,4) {};
    \node[odd vertex, label=right:{$x^-_f$}] (g) at (4,4) {};
    \node[odd vertex, label=right:{$x^-_g$}] (b) at (3,1) {};
    \foreach \dest / \source in {a/e,e/d,d/b,e/f,f/g,d/g,g/c}
        \path[edge] (\source) -- (\dest);
    \foreach \dest / \source in {e/f,d/g}
        \path[other matched edge] (\source) -- (\dest);
    \foreach \dest / \source in {f/g,e/d}
        \path[matched edge] (\source) -- (\dest);
  \end{scope}
\end{tikzpicture}
\vspace{3mm}
\end{center}
\caption{$C_4$-gadgets to construct the undirected graph $G=G(D)$ (right) from the digraph $D$ (left) in the proof of Theorem~\ref{thm:hardness}.
The edges of the matchings $X$ and $Y$ in $G$ are indicated by bold solid and dashed lines, respectively.}
\label{fig:gadgets}
\end{figure}

We construct a pair of perfect matchings $X,Y$ on $G$ as follows.
The first matching $X$ is defined by fixing a particular perfect matching on each $C_4$-gadget, and the second matching $Y$ is obtained from $X$ by flipping all cycles formed by the $C_4$-gadgets; see Figure~\ref{fig:gadgets}.
We claim that $X$ and $Y$ have flip distance at most two in $G$ if and only if $D$ has a directed Hamiltonian cycle.
From this the theorem follows.

First, assume there is a directed Hamiltonian cycle $H$ in $D$.
We define a pair of cycles $C_1,C_2$ in $G$, where 
$C_1$ and $C_2$ both contain all the edges $\{x^+_e x^-_e\mid e \in E(H)\}$, plus additional edges defined as follows.
For each vertex $v \in V(D)$, consider the corresponding $C_4$-gadget in $G$ with the two incident edges corresponding to the edges incident with $v$ on $H$.
The endpoints of those edges on the gadget divide it into two alternating paths in $X$, one with matching edges at both ends and one with non-matching edges at both ends. We add the edges on those two types of paths to $C_1$ or $C_2$, respectively.
Note that $C_1$ is an alternating cycle in $X$.
Moreover, after flipping $C_1$, the cycle $C_2$ is alternating, and flipping $C_2$ yields $Y$, as each edge in a $C_4$-gadget gets flipped once while the remaining edges are flipped an even number of times and thus remain unchanged.

For the reverse implication, assume that $X$ and $Y$ are connected by flipping at most two alternating cycles.
As the symmetric difference $X\Delta Y$ contains at least three disjoint cycles (recall the assumption $|V(D)| \ge 3$), exactly two cycles $C_1,C_2$ are flipped to transform $X$ into $Y$, and neither $C_1$ nor $C_2$ is one of the 4-cycles formed by the $C_4$-gadgets.
As the edges outside the gadgets remain unchanged, they are covered by both $C_1,C_2$ or by neither of them.
We claim that $H:=\{e \in E(D)\mid x^+_e x^-_e \in E(C_1)\}$ is the arc set of a directed Hamiltonian cycle in $D$.
Since up to isomorphism, $H$ is obtained from $E(C_1)$ by contraction of the $C_4$-gadgets, $H$ forms a cycle in $D$ (here we need that $D$ is cubic).
If the cycle $H$ is not a directed cycle in $D$, there would be some $v \in V_1$ with two incoming incident edges from $H$.
However, in this case, the path in the corresponding $C_4$-gadget contained in $C_1$ consists of two edges, one of which is not in $X$, contradicting that $C_1$ is an alternating cycle in $X$.
Finally, the directed cycle $H$ has to be spanning.
Indeed, if there is a $C_4$-gadget not covered by $C_1$, then $C_2$ would have to cover exactly this gadget, a contradiction.
\end{proof}

\begin{proof}[Proof of Theorem~\ref{thm:hardnessdiff}]
We use the following hardness result of Peroche~\cite{Per84}.  Given a digraph $D$, where each vertex has indegree and outdegree equal to $2$, it is \NP-complete to decide if $E(D)$ is the union of two directed Hamiltonian cycles.  Given such a digraph $D$, let $\overleftarrow{D}$ be the digraph obtained from $D$ by reversing the direction of every arc.  We regard $D$ and $\overleftarrow{D}$ as $\alpha$-orientations $X$ and $Y$ of the same underlying graph $G$, where $\alpha(v)=2$ for all $v \in V(G)$. The theorem follows by observing that the flip distance between $X$ and $Y$ is at most $2$ if and only if $E(D)$ is the union of two directed Hamiltonian cycles. Moreover, the same statement holds when we only allow flipping edges that are oriented differently in $X$ and $Y$. 
\end{proof}

\section{Vertex flip distance between $c$-orientations}
\label{sec:lattice}

In this section we prove Theorem~\ref{thm:polynomial}.

Recall that, given a graph $G$ with a fixed vertex $\top$, we only allow vertex flips at vertices distinct from $\top$.
In the case that $G$ is connected, we distinguish between two types of dicuts as follows: we say that a dicut $S$ in an orientation of $G$ is \emph{positive} with respect to $\top$ if and only if the uniquely determined cut set $U$ of $S$ does not contain $\top$.
Otherwise the dicut is called \emph{negative}.
We also define the \emph{interior} of $S$, denoted $\Int(S)$, as the cut set $U$ of $S$ if $S$ is positive and as its complement $\overline{U}$ if $S$ is negative. That is, $\Int(S)$ is the set of vertices on the side of the cut opposite to $\top$.

The following lemma is needed to decompose the edges of certain digraphs into dicuts with nested cut sets.
Formally, for a digraph $D$ and dicuts $S_1=\delta(U_1)$ and $S_2=\delta(U_2)$, the pair $S_1,S_2$ is called \emph{laminar} if either $U_1 \subseteq U_2$, $U_2 \subseteq U_1$, $U_1 \cap U_2=\emptyset$, or $U_1 \cup U_2=V(D)$; see Figure~\ref{fig:poset}.
A family of dicuts in $D$ is called \emph{laminar} if all of its pairs are laminar.
A \emph{balanced digraph} is a digraph in which every cycle of the underlying graph has the same number of forward and backward edges.

\begin{lemma}
\label{lem:laminar}
Let $D$ be a balanced digraph.
Then $E(D)$ can be decomposed into a laminar family of disjoint minimal dicuts.
\end{lemma}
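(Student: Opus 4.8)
The plan is to induct on $|E(D)|$, peeling off one ``extremal'' dicut at a time so that the remaining digraph is still balanced and the dicut we removed is laminar to everything produced later. First I would record the basic structural facts: in a balanced digraph $D$, a \emph{potential} (or tension-potential) $p:V(D)\to\mathbb{Z}$ exists such that every arc $uv$ satisfies $p(v)-p(u)=1$; this is exactly the statement that along every cycle the number of forward and backward arcs agree, so the ``height'' function $p$ is well defined up to an additive constant on each weakly connected component. (If $D$ is not connected we argue componentwise, so assume $D$ connected.) Fix such a $p$ and let $m=\min_{v} p(v)$. Then the set $U:=\{v : p(v)=m\}$ is nonempty, and every arc incident to $U$ leaves $U$: indeed if $uv$ is an arc with $v\in U$ then $p(u)=p(v)-1<m$, impossible, so all arcs at $U$ point out of $U$, i.e. $S:=\delta(U)=\delta^+(U)$ is a minimal dicut (minimal because $p$ restricted to any single vertex of $U$ together with its out-neighbours already forces these arcs).

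Next I would remove $S$ from $D$ and argue the induction goes through. Deleting the arcs of $S$ yields a digraph $D'=D-S$ which is still balanced (deleting arcs cannot create a cycle with unequal forward/backward counts; any cycle of $D'$ is a cycle of $D$). By induction $E(D')$ decomposes into a laminar family $\mathcal{F}'$ of disjoint minimal dicuts of $D'$. The arcs of $S$ are disjoint from all of these, so $\mathcal{F}:=\{S\}\cup\mathcal{F}'$ partitions $E(D)$ into minimal dicuts; the only thing to check is that each member of $\mathcal{F}'$ is still a minimal dicut \emph{of $D$} and that $S$ is laminar to each of them. For laminarity: after removing $S$, the component structure may split, but a cleaner way is to choose $U$ \emph{maximal} among minimizer-type down-sets — concretely, replace the single level set by the full sublevel set and induct with a cut that ``hugs'' $\top$. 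I would restate the induction so that at each step we take the dicut $\delta(U)$ with $U$ equal to a sublevel set $\{p\le t\}$ for the smallest $t$ for which this set is nonempty and properly contained in $V(D)$; since the later cuts live inside $D'$ whose vertex set is unchanged and whose potential is a restriction (possibly shifted per component) of $p$, every later cut set is either disjoint from $U$, contained in $U$, or its complement contains $U$ — precisely the laminar alternatives. Minimality of the later cuts in $D$ versus $D'$ follows because minimality of a dicut $\delta(W)$ is equivalent to $D[W]$ and $D[\overline W]$ each being connected, and removing the arc set $S$ (which lies entirely between $U$ and $\overline U$) does not disconnect either side once $U$ is a sublevel set.

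The main obstacle I expect is exactly this bookkeeping of \emph{minimality} and \emph{laminarity} simultaneously: it is easy to decompose $E(D)$ into dicuts via the potential $p$ (take the level cuts $\{p\le t\}$ for all relevant $t$, which are automatically laminar since sublevel sets of a fixed function are nested), but one must verify that each level cut is a \emph{minimal} dicut — which in general it need not be, if a level set $\{p=t\}$ is disconnected or if $\{p\le t\}$ induces a disconnected subgraph. The fix is to refine: within each ``slab'' between consecutive levels, split into the connected components and peel those off one by one, checking that components of sublevel sets across different slabs are still pairwise laminar (they are, because a component of $\{p\le t\}$ and a component of $\{p\le t'\}$ with $t\le t'$ are either nested or disjoint, as the smaller-level component is connected and either sits inside one larger-level component or is disjoint from it). I would present the argument in this refined form: induct on $|E(D)|$, at each step take a \emph{connected} minimal sublevel set $U\subsetneq V(D)$ with $\delta^+(U)$ empty on the far side forced by $p$, peel $S=\delta(U)$, and invoke induction on $D-S$, finally observing laminarity of $S$ with all subsequently extracted cuts follows from the nestedness of sublevel sets of the restricted potential.
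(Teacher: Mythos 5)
Your route — using a potential $p$ on the balanced digraph and peeling off cuts around sublevel sets — is genuinely different from the paper's, which instead picks a source $s$ (guaranteed by acyclicity, which is itself a consequence of balance), \emph{contracts} all arcs in $\delta^+(\{s\})$, recurses on the contracted digraph, and then tacks on a decomposition of $\delta^+(\{s\})$ into minimal dicuts (all of whose cut sets on the small side are $\{s\}$, so laminarity with the recursive output is automatic).

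The crucial difference is \textbf{deletion vs.\ contraction}, and this is where your argument has a genuine gap that you never fully close. If $D'$ is obtained from $D$ by \emph{contracting} the arcs at a source, then the dicuts of $D'$ are exactly the dicuts of $D$ disjoint from the contracted arcs; the induction hypothesis applies off-the-shelf and its output is immediately a decomposition of $E(D)\setminus\delta^+(\{s\})$ into minimal dicuts of $D$. If instead you \emph{delete} $S=\delta(U)$, a minimal dicut of $D'=D-S$ need not be a dicut of $D$ at all: the arcs of $S$ that cross the same vertex partition may point the other way, and moreover a cut set that is valid in $D'$ may not correspond to any dicut of $D$. You gesture at this by constraining the recursive decomposition to use sublevel cuts of the \emph{same} potential $p$, and by the slab/component refinement to fix non-minimality — but then you are no longer invoking the plain induction hypothesis (``$E(D')$ can be decomposed into a laminar family of minimal dicuts of $D'$''); you need a stronger, more constrained statement, and you never write down what that statement is or verify that the refined sublevel cuts are minimal dicuts \emph{of $D$}, not merely of $D'$ or of a component of $D'$. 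Your claim that ``removing $S$ does not disconnect either side once $U$ is a sublevel set'' is also unjustified as stated (and irrelevant once components are isolated). All of this is avoidable: contract instead of delete, and the bookkeeping you flag as the ``main obstacle'' evaporates — which is precisely what the paper does.
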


\begin{proof}
We prove the statement by induction on $|E(D)|$.
It is clearly true if $E(D)=\emptyset$.
Assume for the induction step that $|E(D)|=k \ge 1$ and the statement holds for all digraphs with less than $k$ arcs.

As $D$ is balanced, it is obviously acyclic and therefore contains a source $s \in V(D)$.
Each cycle in the underlying graph of $D$ that contains $s$ has exactly one forward and one backward edge incident to $s$.
Therefore, the digraph $D'$ obtained from $D$ by contracting the set $\delta^+(\{s\})$ of arcs incident to $s$ is still balanced and has less than $k$ edges.
By the induction hypothesis, there exists a laminar decomposition of $E(D')=E(D) \setminus \delta^+(\{s\})$ into disjoint dicuts in $D'$ and thus in $D$.
Note that the dicuts in $D'$ are exactly those of $D$ disjoint from $\delta^+(\{s\})$, and laminarity is preserved.
Hence adding a decomposition of the directed vertex cut $\delta^+(\{s\})$ into minimal dicuts to the collection gives rise to a decomposition of $E(D)$ into disjoint minimal dicuts.
This resulting decomposition is also laminar, as the cut set of the new minimal cuts is $\{s\}$, which is either contained in or disjoint from each cut set of the original decomposition of $E(D')$.
\end{proof}

For every pair $X,Y$ of $c$-orientations on a graph $G$, the \emph{difference} $X \setminus Y$ denotes the set of arcs in $X$ whose orientation is reversed in $Y$.
The difference forms a balanced subdigraph of $X$ (and reversing them yields a balanced subdigraph of $Y$).
Consequently, Lemma~\ref{lem:laminar} provides another proof that $c$-orientations can be reached from one another by flipping minimal dicuts.

We now consider the partial order defined on acyclic $c$-orientations of an $n$-vertex graph such that the covering relation corresponds to flipping a source vertex.
Recall that by Theorem~\ref{thm:DL}, this partial order is a distributive lattice.
We now reuse a result from Propp~\cite{P02} and Felsner and Knauer~\cite{KF09} that gives an embedding of this distributive lattice into $\mathbb{N}^{n-1}$, which led to the introduction of distributive polytopes by Felsner and Knauer~\cite{KF11}.
This theorem is illustrated in Figure~\ref{fig:lattice}, where the values of the functions $z_X$ are depicted in the vertices of the graph $G$.

\begin{theorem}[\cite{P02,KF09}]
\label{thm:embedding}
Let $G$ be a graph on $n$ vertices with a fixed vertex $\top$, $X$ an acyclic $c$-orientation of $G$, and denote by $X_{\min}$ the minimal element of the associated distributive lattice.
Then the number of times $z_X(x)$ a vertex $x \in V(G) \setminus \{\top\}$ is flipped in an upward lattice path from $X_{\min}$ to $X$ is independent of the sequence.
The resulting function $z_X:\cR_c\to\mathbb{N}^{n-1}$ is a lattice embedding. That is, for every $x,y\in\mathbb{N}^{n-1}$ corresponding to $c$-orientations of $G$, the join and meet correspond to $\min(x,y)$ and $\max(x,y)$, respectively.
\end{theorem}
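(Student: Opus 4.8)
The plan is to build the vector $z_X$ as a graph \emph{tension} (a vertex potential), which by construction does not refer to any flip sequence, and only afterwards verify that this potential records the flip counts. Fix the reference orientation $X_{\min}$, and for $X\in\cR_c$ let $\chi_X\colon E(G)\to\{0,1\}$ be the indicator of the edges whose orientation in $X$ differs from $X_{\min}$. Since $X$ and $X_{\min}$ are $c$-orientations for the same $c$, every cycle $C$ (with a fixed traversal) carries $c(C)$ forward edges in both, so along $C$ the number of reversed edges that point forward equals the number that point backward; equivalently, the signed sum of $\chi_X$ around every cycle vanishes, i.e.\ $\chi_X$ is orthogonal to the cycle space and hence lies in the cut space of $G$. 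Therefore there is a function $z_X\colon V(G)\to\mathbb{Z}$, unique once we normalize $z_X(\top)=0$, with $\chi_X(e)=z_X(u)-z_X(v)$ for every edge $e$ oriented $u\to v$ in $X_{\min}$. This defines a candidate map $z\colon\cR_c\to\mathbb{Z}^{V(G)\setminus\{\top\}}$, $X\mapsto z_X$, with no reference to flip sequences.

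Next I would show that along any upward path $X_{\min}=D_0\lessdot D_1\lessdot\cdots\lessdot D_\ell=X$ in the lattice, where $D_{i+1}$ arises from $D_i$ by flipping the source $s_i$, one has $z_{D_{i+1}}=z_{D_i}+\mathbf 1_{s_i}$ (here $\mathbf 1_v$ is the $v$-th unit vector). This is a local check: flipping the source $s_i$ reverses exactly the edges at $s_i$ — all of which point out of $s_i$ in $D_i$ and into $s_i$ in $D_{i+1}$ — so $\chi$ changes only on those edges, and a two-case inspection according to the reference orientation ($s_i\to u$ or $u\to s_i$) of each such edge confirms that $z_{D_i}+\mathbf 1_{s_i}$ satisfies the defining property of $z_{D_{i+1}}$; by uniqueness (and $\top\neq s_i$) they coincide. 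Since $z_{X_{\min}}=0$ (as $\chi_{X_{\min}}\equiv0$ and $G$ is connected), induction gives $z_X=\sum_i\mathbf 1_{s_i}$, so $z_X$ is precisely the vector of flip counts — in particular path-independent and in $\mathbb{N}^{n-1}$. (Every $X\in\cR_c$ lies above $X_{\min}$ on some saturated chain by Theorem~\ref{thm:DL}, so $z_X$ is defined everywhere.) This settles the first assertion of the theorem and identifies the paper's $z_X$ with the tension potential above.

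For the embedding, the easy direction is immediate: the cover step $z_Y=z_X+\mathbf 1_s$ together with transitivity gives $X\le_c Y\Rightarrow z_X\le z_Y$. The converse is the crux. Suppose $z_X\le z_Y$ but $X\neq Y$, and put $S:=\{v\neq\top: z_Y(v)>z_X(v)\}\neq\emptyset$. I claim $X$ has a source $s\in S$: otherwise, since $X$ is acyclic, pick $v\in S$ with no in-neighbour in $S$; as $v$ is then not a global source of $X$, it has an in-arc $u\to v$ with $u\notin S$, hence $z_X(u)=z_Y(u)$; writing $\chi_X(e),\chi_Y(e)\in\{0,1\}$ for the edge $e=uv$ in terms of $z_X,z_Y$ and the reference orientation yields, in both cases for that orientation, $z_Y(v)\le z_X(v)$, contradicting $v\in S$. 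Flipping such a source $s$ produces $X'\in\cR_c$ with $z_{X'}=z_X+\mathbf 1_s\le z_Y$ (since $z_X(s)+1\le z_Y(s)$) and with $\sum_v\bigl(z_Y(v)-z_X(v)\bigr)$ strictly decreased; iterating reaches $Y$, so $X\le_c Y$. Hence $z$ is an order embedding, in particular injective.

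Finally, to upgrade this to a lattice embedding it suffices to show that the image $z(\cR_c)$ is closed under coordinatewise minimum and maximum. Given $X,Y\in\cR_c$, set $w:=\min(z_X,z_Y)$ with $w(\top)=0$; using $z_X(u)-z_X(v),\,z_Y(u)-z_Y(v)\in\{0,1\}$ for adjacent $u,v$, a short case analysis shows $w(u)-w(v)\in\{0,1\}$ for every edge $u\to v$ of $X_{\min}$, so $e\mapsto w(u)-w(v)$ is the $\chi$-vector of a genuine orientation $Z$; being a coboundary it has $c(C)$ forward edges on every cycle $C$, so $Z\in\cR_c$ (acyclic by our standing assumption), and $z_Z=w$ by construction. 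The same works for $\max$. Thus $z(\cR_c)$ is a sublattice of the integer grid $(\mathbb{Z}^{n-1},\le)$, and since $z$ is an order isomorphism onto it, the join and meet in $\cR_c$ are carried to the coordinatewise minimum and maximum of the corresponding vectors, as claimed. I expect the order-reflection step — the existence of a source of $X$ inside $S$ — to be the main obstacle, as it is the only place that genuinely uses acyclicity of the $c$-orientations and the exact relationship between $\chi$ and the potential; everything else is either a direct computation or a bookkeeping induction.
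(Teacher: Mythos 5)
The paper does not supply its own proof of this theorem; it is cited from Propp \cite{P02} and Felsner--Knauer \cite{KF09}. Your argument reconstructs, correctly and in a self-contained way, exactly the standard tension/potential proof from that literature: you characterize $z_X$ intrinsically as the normalized potential of the coboundary $\chi_X$, which is cycle-orthogonal because $X$ and $X_{\min}$ realize the same $c$, and only afterwards identify it with the flip-count vector via the local check that a source flip adds a unit vector. The path-independence in the first assertion then falls out for free rather than needing a diamond/confluence argument, the order-reflection via ``a source of $X$ lies in the discrepancy set $S$'' (using acyclicity of $X[S]$ and the $\{0,1\}$-increment property) is the right crux, and the closure of the image under componentwise $\min$ and $\max$ correctly upgrades the order embedding to a lattice embedding. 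All the details I checked (the two-case edge inspection, the handling of $u=\top$, the acyclicity of the constructed $Z$ under the paper's standing contraction assumption) go through. The only cosmetic remark is that, under the stated cover relation ``flip a source,'' the join of $X,Y$ in $\le_c$ corresponds to the componentwise $\max$ and the meet to the $\min$; your phrasing at the end mirrors the theorem statement's ordering of ``$\min$ and $\max$,'' which appears to have them swapped, but this does not affect the substance of your proof since you establish closure under both.
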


In other words, the distributive lattice on $\cR_c$ is isomorphic to an induced sublattice of the componentwise dominance order on $\mathbb{N}^{n-1}$.
We call a vertex flip sequence \emph{monotone} if every flipped vertex is either only flipped as a source or only as a sink.
With this definition, Theorem~\ref{thm:embedding} yields the following:

\begin{corollary}
\label{cor:monotone}
Let $G$ be a graph with fixed vertex $\top$ and $X,Y$ a pair of acyclic $c$-orientations on $G$.
Then every monotone vertex flip sequence transforming $X$ into $Y$ has minimal length.
\end{corollary}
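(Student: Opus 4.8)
The plan is to translate the problem into coordinatewise bookkeeping using the lattice embedding of Theorem~\ref{thm:embedding}. Write $z_Z\in\mathbb{N}^{n-1}$ (indexed by $V(G)\setminus\{\top\}$) for the vector associated to an acyclic $c$-orientation $Z$ by that theorem, which applies since by Theorem~\ref{thm:DL} the orientations $X$ and $Y$ lie in a common distributive lattice. The first step is to record the effect of a single vertex flip on this vector. If $v\ne\top$ is a source of $Z$, then flipping it turns $v$ into a sink; appending this flip to any upward lattice path from $X_{\min}$ to $Z$ yields an upward lattice path to the new orientation, so by Theorem~\ref{thm:embedding} the count at $v$ increases by exactly one while all other counts are unchanged, i.e.\ $z$ changes by $+e_v$. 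Flipping a sink at $v$ is the inverse operation and hence changes $z$ by $-e_v$. I would also note in passing that source and sink flips preserve acyclicity (a directed cycle cannot pass through a vertex all of whose incident arcs point the same way), so a vertex flip sequence never leaves the set of acyclic $c$-orientations and $z_Z$ is defined at every intermediate orientation.

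Next comes the lower bound. Suppose a vertex flip sequence of length $\ell$ transforms $X$ into $Y$. Under the embedding this becomes a walk in $\mathbb{N}^{n-1}$ from $z_X$ to $z_Y$ in which each of the $\ell$ steps has the form $\pm e_v$ for some $v\ne\top$, and each step changes exactly one coordinate. If among these steps there are $a_v$ copies of $+e_v$ and $b_v$ copies of $-e_v$, then $a_v-b_v=z_Y(v)-z_X(v)$, so the number of steps touching coordinate $v$ is $a_v+b_v\ge|z_Y(v)-z_X(v)|$. Summing over all $v\ne\top$ gives $\ell\ge\sum_{v\ne\top}|z_Y(v)-z_X(v)|$, that is, $\ell$ is at least the $\ell_1$-distance between $z_X$ and $z_Y$.

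For the matching upper bound I would evaluate the length of a monotone sequence directly. By definition of \emph{monotone}, each vertex $v\ne\top$ is flipped only as a source, only as a sink, or not at all; in each case all steps of the sequence affecting coordinate $v$ have the same sign, so the number of such steps equals the absolute value of the net displacement of that coordinate, namely $|z_Y(v)-z_X(v)|$ (in the "not flipped" case this forces $z_Y(v)=z_X(v)$, so the count is again $0=|z_Y(v)-z_X(v)|$). Hence a monotone vertex flip sequence from $X$ to $Y$ has length exactly $\sum_{v\ne\top}|z_Y(v)-z_X(v)|$, which equals the lower bound just established. Therefore every monotone sequence attains the minimum over all vertex flip sequences, proving Corollary~\ref{cor:monotone}.

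As for difficulty: essentially all the content already sits in the cited Theorems~\ref{thm:DL} and~\ref{thm:embedding}, so I do not expect a genuine obstacle. The only points that need real care are the two bookkeeping claims in the first step---that a single vertex flip shifts $z$ by precisely one standard basis vector, with the correct sign determined by whether the flipped vertex is a source or a sink, and that acyclicity (hence membership in the lattice, hence well-definedness of $z$) is maintained throughout---after which the argument is just the triangle inequality applied in each coordinate.
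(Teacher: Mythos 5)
Your proof is correct and takes the same approach the paper has in mind: the paper presents Corollary~\ref{cor:monotone} as an immediate consequence of Theorem~\ref{thm:embedding} without writing out the argument, and your write-up supplies exactly the implicit details. You correctly identify the two points that need justification --- that a source (resp.\ sink) flip shifts the embedding vector $z$ by $+e_v$ (resp.\ $-e_v$) and that acyclicity persists so that $z$ remains defined along any vertex flip sequence --- and then the conclusion follows from a coordinatewise triangle inequality, with equality for monotone sequences.
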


Consider two $c$-orientations $X, Y$ of $G$.
Our goal is to construct a monotone flip sequence from $X$ to $Y$.
By Lemma~\ref{lem:laminar}, there is a laminar decomposition of the difference $X \setminus Y$ into minimal dicuts whose reversal yields $Y$.
Denote by $\cS=\cS(X,Y)$ the dicuts of one such decomposition.
We construct a poset $P$ on $\cS$ by the inclusion order of the interiors of the minimal dicuts. That is, for $S,T\in\cS$, $S$ is ordered before $T$ in $P$ if and only if $\Int(S) \subseteq \Int(T)$; see Figure~\ref{fig:poset}.
Since $\cS$ is laminar, the cover graph of $P$ is a forest, with the additional property that every non-maximal element $S$ is covered by a unique other element, which we denote by $\cov(S)$.
Moreover, for each vertex $x \in V(G)$ in the interior of at least one of the cuts in $\cS$, we let $S_x$ be the (unique) minimal element of the poset $P$ such that $x\in \Int(S_x)$.
Also, for each $S \in \cS$ we denote by $\underline{\Int}(S):=\{x \in V(G)\mid S_x=S\} \subseteq \Int(S)$ the set of vertices in the interior of $S$ but in none of the interiors of the cuts covered by $S$ in the poset.

For each dicut $S\in\cS$ we define an integer \emph{weight $w(S)$} and a \emph{sign $\sgn(S)\in\{+,0,-\}$} as follows; see Figure~\ref{fig:poset}.
If $S\in \cS$ is a maximal element in $P$ then we define $w(S):=1$, and $\sgn(S):=+$ if $S$ is positive and $\sgn(S):=-$ otherwise.
For every sign $s \in \{+,0,-\}$ and dicut $S \in \cS$, we say that $S$ \emph{agrees} with $s$, if either $s=0$, or if $s=+$ and $S$ is positive, or $s=-$ and $S$ is negative.
For every non-maximal $S \in \cS$, we inductively define
$$w(S):=\begin{cases} w(\cov(S))+1 & \text{ if }S \text{ agrees with }\sgn(\cov(S)), \cr w(\cov(S))-1 & \text{ otherwise}, \end{cases}$$
and
$$\sgn(S):=\begin{cases} \sgn(\cov(S)) & \text{if } \sgn(\cov(S)) \neq 0 \text{ and } w(S) \neq 0, \cr + & \text{if } \sgn(\cov(S)) = 0, w(S) \neq 0 \text{ and }S \text{ is positive}, \cr - & \text{if } \sgn(\cov(S)) = 0, w(S) \neq 0 \text{ and }S \text{ is negative}, \cr 0 & \text{if }w(S)=0. \end{cases}$$

\begin{figure}
\centering
	\includegraphics[scale=0.7]{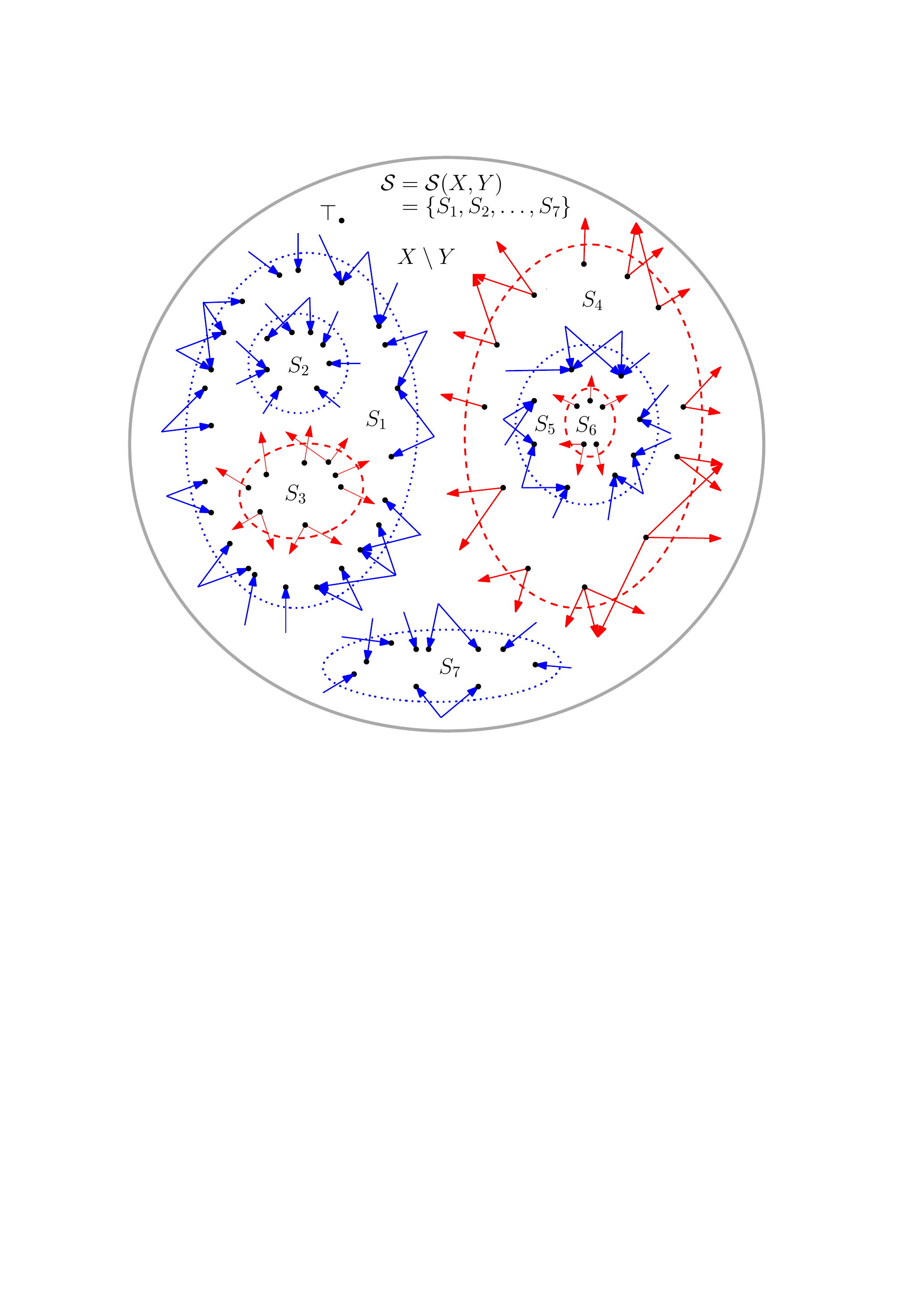}\hspace{0.4cm}
\includegraphics[scale=0.9]{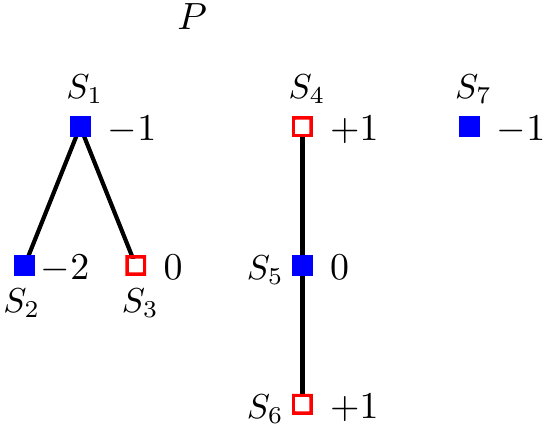}
\vspace{3mm}
\caption{A laminar collection $\cS$ of disjoint minimal dicuts of $X\setminus Y$ (left), where the positive ones are dashed, and the negative ones are dotted, and the corresponding poset $P$ of dicuts in $\cS$ ordered by inclusion (right) with its associated signed weights $\sgn(S)\cdot w(S)$, $S\in\cS$.}
\label{fig:poset}
\end{figure}

It follows from this definition that the weights are non-negative and that $\sgn(S)=0$ if and only if $w(S)=0$ for every $S \in \cS$.
We will see that given a minimal dicut $S$ in $\cS$, the weight $w(S)$ describes the number of times each vertex which lies in $\underline{\Int}(S)$ will be flipped, whereas $\sgn(S)$ captures the direction in which (all) these vertices are flipped.  That is, a positive sign means that vertices are flipped from sources to sinks, while a negative sign means that vertices are flipped from sinks to sources.
We will need the following auxiliary statement.

\begin{lemma}
\label{lem:flip-int}
Let $X, Y$ be acyclic $c$-orientations of a connected graph $G$ with fixed vertex $\top$.
If $S=X\setminus Y$ is a positive dicut, then there is a vertex flip sequence transforming $X$ into $Y$ such that only vertices in $\Int(S)$ are flipped, each exactly once from source to sink.

The analogous statement for negative dicuts holds with sources and sinks exchanged.
\end{lemma}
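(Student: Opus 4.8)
The plan is to induct on $|\Int(S)|$, peeling off one source at a time while maintaining a positive dicut structure. Since $G$ is connected and $S = \delta^+(U) = X \setminus Y$ is a dicut with $\top \notin U = \Int(S)$, every arc of $X$ between $U$ and $\overline{U}$ points from $U$ to $\overline{U}$. I first observe that the subdigraph $X[U]$ is acyclic (as $X$ is acyclic), so it contains a source vertex $s$ \emph{within} $U$; I claim $s$ is in fact a source of the whole orientation $X$. Indeed, any arc of $X$ entering $s$ would either come from inside $U$ (impossible, $s$ is a source of $X[U]$) or from $\overline{U}$ (impossible, all cut arcs point out of $U$). So $s$ is a genuine source of $X$, hence flippable, and flipping it is a legal vertex flip (note $s \neq \top$ since $\top \notin U$). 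Let $X'$ be the result of flipping $s$.

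Next I need to check that $X' \setminus Y$ is again a positive dicut with interior $U \setminus \{s\}$. Flipping $s$ reverses exactly the arcs of $\delta(\{s\})$, all of which (in $X$) point out of $s$. In $X'$ these all point into $s$. Now compare $X'$ with $Y$: the arcs of $X$ that were reversed to reach $Y$ are precisely $S = \delta^+(U)$, none of which is incident to $s$ — because every arc incident to $s$ has its other endpoint in $U$ (cut arcs all leave $U$, so $s$ being a source of $X$ has no cut arcs), hence lies in $E(G[U])$ and so is oriented the same in $X$ and $Y$. Therefore $X' \setminus Y = (X \setminus Y) \,\triangle\, \delta(\{s\}) = \delta^+(U) \,\triangle\, \delta^+(\{s\}) = \delta^+(U \setminus \{s\})$, which is the dicut induced by $U \setminus \{s\}$ in $X'$; since $\top \notin U \setminus \{s\}$ it is positive, and its interior is $U \setminus \{s\}$ (using connectivity so the cut set is unique). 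Also $X'$ is still acyclic and still a $c$-orientation. By induction there is a vertex flip sequence from $X'$ to $Y$ flipping each vertex of $U \setminus \{s\}$ exactly once, source-to-sink; prepending the flip of $s$ gives the desired sequence for $X$. The base case $\Int(S) = \emptyset$ means $X = Y$ and the empty sequence works.

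The statement for negative dicuts follows by the symmetry $\top \leftrightarrow$ (the single vertex on the $\top$-side is irrelevant), or more precisely: if $S = X \setminus Y$ is a negative dicut $\delta^+(\overline{U})$ with $\top \in \overline{U}$, then $Y \setminus X = \delta^+(U)$ is a \emph{positive} dicut with $\Int = U$, so applying the positive case to the pair $(Y, X)$ yields a source-to-sink sequence from $Y$ to $X$ flipping each vertex of $U$ once; reversing this sequence transforms $X$ into $Y$ by flipping each vertex of $U = \Int(S)$ once, now from sink to source, as claimed.

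The main obstacle is the verification that the chosen source of $X[U]$ is actually a source of $X$ and that flipping it preserves the ``positive dicut of exactly $X\setminus Y$'' invariant — i.e.\ that the flipped vertex touches no arc of the difference set $S$. This hinges on the clean fact that a source of $X$ lying in $\Int(S)$ has all its incident arcs inside $G[\Int(S)]$, which are oriented identically in $X$ and $Y$; once that is in hand the induction is routine.
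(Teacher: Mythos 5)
Your proposal follows essentially the same induction as the paper's proof: take a source $s$ of $X[\Int(S)]$, observe that it is in fact a source of all of $X$, flip it, and recurse on the resulting positive dicut of smaller interior. Your handling of the negative case by applying the positive case to $(Y,X)$ and reversing the resulting sequence is a clean way to make the symmetry explicit (the paper leaves this to the reader).

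One flaw is worth fixing. The ``clean fact'' you single out as the crux --- that a source $s\in\Int(S)$ has \emph{all} its incident arcs inside $G[\Int(S)]$, so that $s$ touches no arc of the difference cut $S$ --- is false in general, and the parenthetical justification (``cut arcs all leave $U$, so $s$ being a source of $X$ has no cut arcs'') is a non sequitur: an arc from $s$ to a vertex of $\overline{U}$ simultaneously leaves $U$ and leaves $s$, so there is no conflict with $s$ being a source. Indeed, when $\Int(S)=\{s\}$, \emph{every} arc at $s$ is a cut arc. Your proof nevertheless survives, because the set identity you then write, $\delta(U)\,\triangle\,\delta(\{s\})=\delta(U\setminus\{s\})$, holds whether or not $\delta(U)$ and $\delta(\{s\})$ intersect: if $s$ has a cut arc $sv$ with $v\in\overline{U}$, that arc is reversed both by the flip at $s$ and by the passage from $X$ to $Y$, so it correctly drops out of $X'\setminus Y$ via the symmetric difference. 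The false intermediate claim should simply be deleted; the symmetric-difference computation that follows it is already the correct and complete justification, and is what the paper asserts more tersely.
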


\begin{proof}
We prove the statement by induction on $|\Int(S)|$. If $\Int(S)$ is a single vertex, then $S$ corresponds to the arcs incident to a source, and the statement holds.

For the induction step assume $|\Int(S)|=k \ge 2$ and that the claim holds for all positive cuts in $c$-orientations of $G$ whose interiors have size less than $k$.
Since $X$ is acyclic, the induced subdigraph $X[\Int(S)]$ is also acyclic and thus contains a source $x \in \Int(S)$.
Since $\Int(S)$ is the cut set of $S$, $x$ is a source in $X$ as well.
Let $Z$ be the $c$-orientation obtained from $X$ by a vertex flip at $x$.
It follows that the cut $\delta(\Int(S) \setminus \{x\})$ in $Z$ is positive with interior of size $k-1$.
By induction, there is a vertex flip sequence from $Z$ to $Y$ such that only vertices in $\Int(S) \setminus \{x\}$ are flipped, each exactly once from source to sink.
Starting with a vertex flip at $x$ and continuing with this flip sequence yields a flip sequence from $X$ via $Z$ to $Y$ with the desired properties.
\end{proof}

We are now in position to prove the main result of this section.

\begin{theorem}
\label{thm:main}
Let $X,Y$ be acyclic $c$-orientations of a connected graph $G$.
There is a monotone vertex flip sequence transforming $X$ into $Y$ which can be computed in time polynomial in $|E(G)|$.
\end{theorem}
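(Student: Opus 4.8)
The plan is to combine Lemma~\ref{lem:laminar}, Lemma~\ref{lem:flip-int}, and Corollary~\ref{cor:monotone} into a constructive algorithm whose output is verified to be monotone. First I would fix a laminar decomposition $\cS=\cS(X,Y)$ of the difference digraph $X\setminus Y$ into disjoint minimal dicuts, which exists and is computable in polynomial time by repeatedly peeling off source-stars as in the proof of Lemma~\ref{lem:laminar}. From $\cS$ I build the inclusion poset $P$ of interiors (a forest, since $\cS$ is laminar), and compute for each $S\in\cS$ the weight $w(S)$ and sign $\sgn(S)$ by the top-down recursion already defined in the text; this takes linear time in $|\cS|\le|E(G)|$.

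The core of the proof is the claim that the following flip sequence transforms $X$ into $Y$: process the cuts of $\cS$ in an order extending $P$ from the maximal elements downward (a reverse topological order of the forest), and when handling $S$, flip each vertex of $\underline{\Int}(S)$ exactly $w(S)$ times, each flip in the direction prescribed by $\sgn(S)$ — from source to sink if $\sgn(S)=+$, from sink to source if $\sgn(S)=-$, and not at all if $\sgn(S)=0$. I would prove correctness by induction on $|\cS|$ (equivalently on the height of $P$), peeling off a maximal element $S_0$ of $P$. The key local fact, which I would isolate as the inductive heart, is that for a single minimal dicut $S$ that is currently "active" (i.e. $S=X'\setminus Y'$ for the intermediate orientations), Lemma~\ref{lem:flip-int} lets us reverse exactly $S$ by flipping each vertex of $\Int(S)$ once in a uniform direction; iterating this and bookkeeping how the nested structure causes vertices in $\underline{\Int}(S)$ to be flipped $w(\cov(S))\pm1$ times gives precisely the recursion defining $w$ and $\sgn$. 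Concretely, a vertex $x$ with $S_x=S$ lies in the interiors of exactly the cuts on the $P$-path from $S$ up to the root, and each such cut contributes one flip of $x$ in its own direction; the signed sum telescopes to $\sgn(S)\cdot w(S)$, and the definition of $w,\sgn$ is exactly engineered so that consecutive flips of $x$ always alternate source/sink, making the per-vertex subsequence a valid alternating run, hence after $w(S)$ flips $x$ ends in the orientation dictated by $Y$ on its incident edges.

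Once correctness is established, monotonicity is immediate from the construction: every vertex $x\in\underline{\Int}(S_x)$ is flipped only in the single fixed direction $\sgn(S_x)$, so it is either always flipped as a source or always as a sink; a vertex never in any interior is never flipped. Hence by Corollary~\ref{cor:monotone} the produced sequence has minimal length. The obvious catch is that the flips prescribed by different cuts must be \emph{interleavable} into one globally valid sequence — at each step the vertex we want to flip must actually be a source (resp.\ sink) in the current orientation. This is where I expect the main obstacle, and I would resolve it exactly as in Lemma~\ref{lem:flip-int}: within each $\Int(S)$ we always have freedom to pick a source of the acyclic induced subdigraph, and processing cuts in reverse topological order of $P$ guarantees that when we start working inside $S$, the cuts strictly below it have already been resolved so that $S$ is genuinely the active difference cut on its interior. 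Bounding the running time is then routine: $|\cS|\le|E(G)|$, each weight is at most $|\cS|$, Lemma~\ref{lem:flip-int} is linear per cut, and finding successive sources costs polynomial time, so the whole algorithm runs in time polynomial in $|E(G)|$.
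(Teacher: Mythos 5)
Your high-level structure matches the paper up to a point: laminar decomposition $\cS$ via Lemma~\ref{lem:laminar}, the inclusion poset $P$, the weight/sign recursion, and the appeal to Corollary~\ref{cor:monotone} to conclude minimality from monotonicity. But the core inductive step is wrong in two related ways. First, the processing order is reversed: you propose to peel off a \emph{maximal} element of $P$ and work "from the maximal elements downward," whereas the paper peels off a \emph{minimal} element $S$ (an innermost cut, so $\underline{\Int}(S)=\Int(S)$), applies Lemma~\ref{lem:flip-int} to obtain a sequence $F_1$ reversing exactly $S$, forms the intermediate orientation $Z$, and recurses on $\cS\setminus\{S\}$ to get $F_2$. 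Your stated justification ("the cuts strictly below it have already been resolved") is actually false for a max-first order — the cuts with smaller interiors are processed \emph{later}, not earlier — and applying Lemma~\ref{lem:flip-int} to a non-minimal $S\in\cS$ is not even well-typed, since then $S$ is not the entire current difference $X'\setminus Y'$. Moreover the atomic step "flip each vertex of $\underline{\Int}(S)$ exactly $w(S)$ times" is not an executable local operation when $w(S)\ge 2$: a vertex cannot be flipped twice in the same direction without intervening flips of its neighbours.

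Second, and more fundamentally, you are missing the cancellation argument that is the real content of the paper's proof. When the peeled-off minimal $S$ does \emph{not} agree with $\sgn(\cov(S))$ (the paper's Case~2), the naïve concatenation $F=F_1F_2$ flips each $x\in\Int(S)$ once in one direction in $F_1$ and then $w(\cov(S))$ times in the opposite direction in $F_2$ — so $F$ is not monotone and this is precisely the case where $w(S)=w(\cov(S))-1$ comes from subtraction rather than addition. The paper resolves this by showing that the first occurrence of $x$ in $F_1$ and the first occurrence of $x$ in $F_2$ can be deleted as a pair, because no neighbour of $x$ is flipped between them (once $x$ has been flipped, say, from source to sink, all its incident edges point in, and $F_2$ only flips sinks to sources, so no neighbour can be a sink at that moment); this deletion is iterated to produce the reduced monotone sequence $F'$. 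Your phrase "the signed sum telescopes" captures the arithmetic identity $w(S)=w(\cov(S))\pm 1$, but gives no reason why the shorter, cancelled sequence is still a valid flip sequence. And the remark that "consecutive flips of $x$ always alternate source/sink, making the per-vertex subsequence a valid alternating run" contradicts monotonicity — in a monotone sequence every flip of a given $x$ is in the \emph{same} direction. So while the scaffolding and the final counting claim match the paper, the inductive engine (min-first peeling plus the pairwise deletion argument) is missing, and this is exactly the part that makes the theorem nontrivial.
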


\begin{proof}
Consider the following strengthening of the theorem:

{\it \noindent \underline{Claim:} Let $X,Y$ be acyclic $c$-orientations of a connected graph $G$ and $\cS=\cS(X,Y)$ a laminar decomposition of $X \setminus Y$ into disjoint minimal dicuts.
Then there is a monotone vertex flip sequence from $X$ to $Y$, such that every flipped vertex $x$ is contained in the interior of some dicut of $\cS$, and $x$ is flipped $w(S_x)$ times from source to sink if $\sgn(S_x)=+$, and from sink to source otherwise.
}

We prove this claim by induction on the size of $\cS$.
The statement is clearly true if $X=Y$ (which means that $|\cS|=0$), settling the base case of the induction.
Assume for the induction step that we are given a pair $X \neq Y$ of $c$-orientations and a laminar decomposition $\cS$ of $X \setminus Y$ of size $k \ge 1$.
Assume that the claim holds for all pairs of $c$-orientations with a laminar decomposition of size less than $k$.

In the poset $P$ on $\cS$ we consider a minimal element corresponding to a cut $S\in\cS$, i.e., we have $\underline{\Int}(S)=\Int(S)$ and all vertices $x\in\Int(S)$ satisfy $S_x=S$.
Lemma~\ref{lem:flip-int} gives a vertex flip sequence $F_1$ that flips only vertices in $\Int(S)$, each exactly once from source to sink if $S$ is positive and from sink to source if $S$ is negative.
Applying this flip sequence to $X$, we obtain an intermediate $c$-orientation $Z$ that differs from $X$ only by the reversal of all edges in $S$.
Consequently, $\cS \setminus \{S\}$ is a laminar decomposition of $Z \setminus Y$ into minimal dicuts in $Z$ of size $k-1$.
By induction, we also have a vertex flip sequence $F_2$ transforming $Z$ into $Y$ with the aforementioned properties.

Note that the weights and signs of all dicuts $T\in\cS\setminus\{S\}$ defined with respect to $\cS$ or $\cS\setminus\{S\}$ are the same, so we may simply write $w(T)$ and $\sgn(T)$.
Furthermore, the set $\underline{\Int}(T)$ defined with respect to $\cS$ is a subset of the same set defined with respect to $\cS \setminus \{S\}$.
To complete the induction step, we distinguish two cases.

The first case is that $S$ is a maximal element in $P$, or that $S$ agrees with $\sgn(\cov(S))$.
In this case, we claim that  the concatenation $F$ of $F_1$ and $F_2$ is a flip sequence transforming $X$ via $Z$ into $Y$ with the desired properties.
It suffices to check this for the vertices in $\underline{\Int}(S)=\Int(S)$, since for all other vertices, the claimed properties follow inductively (they are never flipped in $F_1$, so their behavior in $F$ will be the same as in $F_2$).
If $S$ is a maximal element in $P$, then $w(S)=1$ and every vertex $x\in\Int(S)$ will be flipped exactly once.
Moreover, according to Lemma~\ref{lem:flip-int}, if $S$ is positive, i.e., $\sgn(S)=+$, then $x$ is flipped from source to sink, and if $S$ is negative and $\sgn(S)=-$, then $x$ is flipped sink to source.
It remains to consider the subcase that $S$ is not maximal, i.e., $\cov(S)$ exists.
Consider any vertex $x\in \Int(S)$.
During the flip sequence $F$, the vertex $x$ is flipped once in $F_1$ and $w(\cov(S))$ times in $F_2$, so $w(S)=w(\cov(S))+1$ times in total, as required.
Moreover, the assumption that $S$ agrees with $\sgn(\cov(S))$ means that either $\sgn(\cov(S))=+$ and $S$ is positive or $\sgn(\cov(S))=-$ and $S$ is negative, or $w(\cov(S))=\sgn(\cov(S))=0$.
We conclude from the inductive assumption that in those three cases, $x$ is only flipped from source to sink in both $F_1$ and $F_2$, or only sink to source in both, or only once in $F_1$ but not in $F_2$, respectively.
Consequently, $x$ satisfies the inductive claim in all cases.

The second case is that $S$ is not maximal, i.e., $\cov(S)$ exists, and that $S$ does not agree with $\sgn(\cov(S))$.
This means that $w(S)=w(\cov(S))-1$.
Without loss of generality, assume that $S$ is positive and consequently $\sgn(\cov(S))=-$ (the other case is symmetric).
Consider again the vertex flip sequence $F$ obtained by concatenating $F_1$ and $F_2$.
This flip sequence would transform $X$ via $Z$ into $Y$, however, we will not actually apply $F$, but modify the sequence as follows.
By Lemma~\ref{lem:flip-int}, $F_1$ flips each vertex in $\Int(S)$ exactly once from source to sink.  By induction, in $F_2$, each vertex in $\Int(S)$ contained in the interior of $\cov(S)$ (defined with respect to $\cS\setminus\{S\}$) is flipped from sink to source.
Let $x$ be the last element of $F_1$ and consider the subsequence $x,x_1,\ldots,x_k,x$ of $F$ starting with $x$ and ending with the first occurrence of $x$ in $F_2$.
None of the vertices $x_1,\ldots,x_k$ is adjacent to $x$ in $G$, because after the first vertex flip at $x$ (from source to sink) all edges incident with $x$ are incoming, and in $F_2$ we only flip sinks to sources.
This shows that deleting the first two occurrences of $x$ from $F$ preserves the number of and direction of all flips at vertices distinct from $x$, and still transforms $X$ into $Y$.
Repeated application of this argument produces a reduced vertex flip sequence $F'$ transforming $X$ into $Y$ such that each vertex $x \in V(G) \setminus \Int(S)$ is flipped the same number of times and in the same direction as in $F_2$.
That is, $x$ is flipped $w(S_x)$ times from source to sink if $\sgn(S_x)=+$, and $w(S_x)$ times from sink to source if $\sgn(S_x)=-$.
On the other hand, every $x \in \Int(S)$ is missing its first occurrence but is flipped in the same way from sink to source for all remaining occurrences.
This implies that $x$ is flipped $w(S)=w(\cov(S))-1$ times from sink to source, as it should.
This proves that $F'$ is a vertex flip sequence from $X$ to $Y$ satisfying the conditions in our claim, completing its proof.

It remains to verify that the recursive algorithm obtained from this inductive argument runs in time polynomial in $m:=|E(G)|$.
First of all, the number of dicuts in any laminar decomposition, which corresponds to the number of induction steps, is bounded by the number of edges $m$.
Consequently, it suffices to show that the number of operations needed in one induction step is bounded by a polynomial in $m$.
Specifically, we need to compute the cover relations of $P$, the weights and signs of the dicuts, find a minimal element of the poset $P$, test its properties for the case distinction and construct the resulting flip sequence by concatenation and possibly deletion of double occurrences, all of which can be done in time $\cO(m^2)$.
This proves an upper bound of $\cO(m^3)$ for the total number of steps performed for the construction of the monotone flip sequence to transform $X$ into $Y$.
Finally, a laminar decomposition $\cS$ of $X\setminus Y$ as guaranteed by Lemma~\ref{lem:laminar} can be computed in polynomial time, by following the recursive strategy explained in the proof of the lemma.
This completes the proof.
\end{proof}

Combining Corollary~\ref{cor:monotone} and Theorem~\ref{thm:main} yields Theorem~\ref{thm:polynomial}.

\section{Flip distance with larger cut sets}
\label{sec:jump}

In this section we prove Theorem~\ref{thm:2flips} by reduction from the following \NP-hard problem.

Given a (finite) poset $(P,\prec)$, its \emph{height} is the maximum size $k$ of a chain $x_1\prec x_2\prec \cdots \prec x_k$ in $P$.
A \emph{linear extension} of $P$ is a sequence $(x_1,\ldots,x_n)$ of all elements of $P$ such that $x_i\prec x_j$ implies that $i<j$.
Given a linear extension $L=(x_1,\ldots,x_n)$ of $P$, a \emph{jump} is a pair $x_i,x_{i+1}$ in $L$ for which $x_i \not\prec x_{i+1}$ in $P$.
Conversely, a \emph{bump} is a pair $x_i,x_{i+1}$ such that $x_i \prec x_{i+1}$.
The \emph{jump number} $s(P)$ of $P$ is the minimum number of jumps among all linear extensions of $P$.
The \emph{Jump Number Problem} is the algorithmic problem of computing the jump number of a poset given by its comparabilities.

\begin{theorem}[\cite{P81,M90}]
\label{thm:jump}
Determining the jump number of a poset of height two is \NP-hard.
\end{theorem}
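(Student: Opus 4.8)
The plan is to reduce from the problem of computing a \emph{maximum uniquely restricted matching} in a bipartite graph, which is known to be \NP-hard; alternatively, as in the original arguments behind the theorem, one can give a direct gadget reduction from a Hamiltonicity problem. I describe the first route, since it isolates the combinatorial core. Recall that a matching $M$ of a graph $H$ is \emph{uniquely restricted} if the subgraph $H[V(M)]$ induced by the vertices covered by $M$ has $M$ as its only perfect matching, equivalently, $H[V(M)]$ contains no $M$-alternating cycle. The heart of the argument is the identity $s(P) = |V| - 1 - \nu_{ur}(G)$ relating the jump number of a height-two poset $P$ to the maximum uniquely restricted matching number $\nu_{ur}$ of an associated bipartite graph $G$.

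First I would set up the translation between posets and bipartite graphs. A poset $P$ of height two has its ground set partitioned into its set $A$ of minimal elements and its set $B$ of maximal elements (isolated elements placed, say, in $A$), and its comparabilities form a bipartite graph $G=G(P)$ between $A$ and $B$; conversely every bipartite graph arises this way, so it suffices to analyze the poset $P_G$ of an arbitrary bipartite graph $G$ on $n$ vertices. In any linear extension of $P_G$, a \emph{bump} $x_i \prec x_{i+1}$ forces $x_i\in A$ and $x_{i+1}\in B$ with $x_ix_{i+1}\in E(G)$, and no two consecutive pairs can both be bumps, since that would create a chain on three elements. Hence a linear extension partitions into maximal ``blocks'' that are either single elements or matched pairs $a\prec b$, consecutive blocks being separated by jumps; if $k$ is the number of bumps then there are $n-k$ blocks and exactly $n-k-1$ jumps, so $s(P_G)=n-1-(\text{maximum number of bumps over linear extensions})$.

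Next I would show that the maximum number of bumps equals $\nu_{ur}(G)$. The bump set of any linear extension is a matching $M$; viewing the blocks as vertices of an auxiliary digraph with an arc from a block to another whenever some element of the former lies below some element of the latter, the given linear extension realizes a topological order of this digraph, so it is acyclic. A short case analysis, using that $A$-elements are minimal and $B$-elements maximal (so singleton $A$-blocks are sources and singleton $B$-blocks are sinks, and cycles can only run through matched-pair blocks), shows that a directed cycle in this block digraph corresponds exactly to an $M$-alternating cycle in $G[V(M)]$; hence $M$ is uniquely restricted. Conversely, given a uniquely restricted $M$, the block digraph built from the matched pairs together with the remaining vertices as singletons is acyclic by the same correspondence, and concatenating the blocks in any topological order (each matched pair written as $a$ then $b$) yields a valid linear extension with at least $|M|$ bumps. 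Combining the two directions gives $s(P_G)=n-1-\nu_{ur}(G)$, so a polynomial algorithm for the jump number of height-two posets would solve maximum uniquely restricted matching on bipartite graphs, which is \NP-hard; hence \NP-hardness of the jump number for height-two posets follows.

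I expect the main obstacle to be the precise identification of which matchings are realizable as bump sets: both directions of the equivalence with uniquely restricted matchings hinge on the block-digraph reformulation and the acyclicity argument, and one must check carefully that the converse construction may create extra bumps (which only helps) while never destroying the prescribed ones. The remaining ingredient, the \NP-hardness of maximum uniquely restricted matching on bipartite graphs, can be invoked as a cited result, or re-derived by a standard reduction (for instance from $3$-\textsc{sat} or from a restricted Hamiltonian path problem), which recovers the original proofs of the statement.
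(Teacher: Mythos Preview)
The paper does not prove this theorem; it is quoted as a known result from the references~\cite{P81,M90} and used as a black box in the proof of Theorem~\ref{thm:2flips}. So there is no ``paper's own proof'' to compare against, and your proposal stands or falls on its own.

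Your argument is essentially correct. For a height-two poset $P$ with bipartite comparability graph $G$ on $n$ vertices, the identity $s(P)=n-1-\nu_{ur}(G)$ holds, and your block-digraph argument proves it: the bump set of any linear extension is a matching, the induced block digraph must be acyclic, and a directed cycle among matched-pair blocks is exactly an $M$-alternating cycle in $G[V(M)]$ (singleton blocks being sources or sinks cannot participate). The converse construction from a uniquely restricted matching via a topological order of the blocks is also sound; extra bumps between singleton blocks only help the inequality.

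One caution worth making explicit: be sure your cited \NP-hardness of maximum uniquely restricted matching on bipartite graphs is established \emph{independently} of the jump-number hardness, otherwise the reduction is circular. The Golumbic--Hirst--Lewenstein proof does proceed by a direct reduction from a satisfiability variant, so this is fine, but the chronology is worth noting: the concept of uniquely restricted matchings postdates both~\cite{P81} and~\cite{M90}, whose original proofs go by direct gadget reductions (from satisfiability, in M\"uller's case). Your route therefore differs from the historical proofs but is a legitimate and arguably cleaner repackaging of the same hardness, factoring through a now-standard matching problem.
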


\begin{proof}[Proof of Theorem~\ref{thm:2flips}]
We provide a Turing-reduction of the Jump Number Problem for posets of height two to the problem stated in the theorem.
For this purpose, assume we are given a poset $(P,\prec)$ of height two with bipartite Hasse diagram $G=(P_1\cup P_2, E)$ as an instance for the Jump Number Problem.
We may assume that $P$ has no isolated elements and that $P_1$ contains all minimal elements and $P_2$ all maximal elements of the poset.
We construct an auxiliary Hasse diagram $G'$ from $G$ by adding an additional unique maximal element $\top$, and connecting it with edges to all vertices of $G$.
We construct two orientations $X,Y$ of $G'$ as follows:
In both orientations all edges are oriented from $P_1$ to $P_2$.
Moreover, in $X$ all edges incident with $\top$ are oriented towards $\top$, while in $Y$ all these edges are oriented away from $\top$.
As $X,Y$ are obtained from each other by flipping all edges incident with $\top$ (this flip is not allowed, though, as $\top$ is the fixed vertex), they are $c$-orientations with respect to the same~$c$.

Let $d$ denote the minimal flip distance between these $c$-orientations according to the conditions of the theorem.
We will complete the proof by showing that $s(P)=d-1$.

We first show that $s(P)\ge d-1$.
For this argument, let $L=(x_1,\ldots,x_n)$ be an arbitrary linear extension of $P$.
As $P$ has height two, the elements $\{x_1,\ldots,x_n\}$ of $P$ are partitioned into subsets $B_1,\ldots,B_m$ of size one or two, such that for all $B_i,B_j$ with $i<j$, the elements from $B_i$ appear before the elements from $B_j$ in $L$, and such that the two-element sets $B_i$ contain exactly all bump pairs.
We define a flip sequence that starts with the orientation $X$ and consecutively flips the cuts induced by $B_1,\ldots B_m$.
Since for all $1\le i\le m$, $B_i$ and $\overline{B_i}:=(P\cup\{\top\})\setminus B_i$ induce connected subgraphs of $G'$, these are indeed minimal cuts.
Moreover, each of these cuts is flippable.
This is obviously true for $B_1$, as $B_1$ induces a dicut in $X$.
Now assume inductively that the cuts induced by $B_1,\ldots,B_{k-1}$ for some $k \ge 2$ have been flipped.
As $L$ is a linear extension of $P$, all elements in the downset of $B_k$ in $P$ but not in $B_k$ are in one of the $B_i$ with $i<k$.
This implies that every arc between some $x \in B_k$ and $y \notin B_k$ is oriented from $x$ to $y$ in the current orientation, and thus $B_i$ is indeed flippable.

In this flip sequence, every arc in $X$ not incident to $\top$ will be flipped zero or two times and thus maintains its original orientation, while all the edges incident to $\top$ get reversed, as they are incident to exactly one set $B_i$.
Consequently, the flip sequence transforms $X$ into $Y$, proving that $d\le m$.
As $m$ equals the number of jumps in $L$ plus 1 (every non-jump is a bump within one of the $B_i$), this yields $d-1\le s(P)$.

We now show that $s(P)\le d-1$.
Assume that $B_1,\ldots,B_m \subseteq P$ are the cut sets of size one or two appearing (in this order) in a shortest flip sequence transforming $X$ into $Y$.
We may assume that among all shortest flip sequences, this sequence also minimizes $|B_1|+|B_2|+\ldots+|B_m|$.
Since each vertex $x \in P$ has an outgoing arc to $\top$ in $X$ which must be reversed during the flip sequence, $x$ must be contained in at least one of the $B_i$.
We claim that $x$ is contained in at most one of the $B_i$.  That is, the $B_i$ are pairwise disjoint.
Assume to the contrary that $x \in B_i \cap B_j$ for some $i<j$ and that $B_i,B_j$ is the only intersecting pair among $B_i,B_{i+1},\ldots,B_j$ (by minimizing $j-i$).
In particular, none of the cut sets $B_{i+1},\ldots,B_{j-1}$ contains $x$, and $x$ is the only vertex flipped multiple times in this subsequence.
We are then in one of the four cases $B_i=B_j=\{x\}$, or $B_i=\{x,y\}$ and $B_j=\{x\}$, or $B_i=\{x\}$ and $B_j=\{x,z\}$, or $B_i=\{x,y\}$ and $B_j=\{x,z\}$ for some elements $y,z\in P$ distinct from $x$.
Since no vertex adjacent to $x$ in $G'$ can be flipped by $B_{i+1},\ldots,B_{j-1}$, it follows that in each of these cases, the sequence
$$B_1,\ldots,B_{i-1},B_i\setminus \{x\},B_{i+1},\ldots,B_{j-1},B_j\setminus \{x\},B_{j+1},\ldots,B_m$$
is a valid flip sequences from $X$ to $Y$ of length at most $m$ and with decreased sum $|B_1|+|B_2|+\ldots+|B_m|$, a contradiction.
This proves that the cut sets $B_i$ are pairwise disjoint.

The $B_i$ are flipped one after the other and by definition of $X$, the dicut induced by $B_i$ is flippable if and only if all the elements in the downset of $B_i$ with respect to $P$ but not in $B_i$ were flipped before.
Therefore, by listing the elements in the sets $B_1,\ldots,B_m$ in this relative order, and ordering the elements within each $B_i$ according to their order in $P$, we obtain a linear extension $L$ of $P$ whose jumps are exactly those pairs having elements in two consecutive sets $B_i$.
It follows that there are $m-1=d-1$ jumps in $L$, proving that $s(P) \le d-1$.

Combining these arguments shows that $s(P)=d-1$, and using Theorem~\ref{thm:jump} we obtain the claimed hardness result.
\end{proof}

\section{Open problems}

Recall that Problem~\ref{pb:alphadistdiff} asks for a shortest flip sequence of directed cycles transforming one $\alpha$-orientation $X$ into another one $Y$, where we only allow flipping edges that are oriented differently in $X$ and $Y$.
Since the set of edges that are oriented differently in $X$ and $Y$ form a Eulerian subdigraph $D$ of both $X$ and $Y$, we have the following natural question:

\begin{question}
\label{quest:decompose}
What is the smallest number of directed cycles into which a Eulerian digraph can be decomposed?
\end{question}

We have seen in Theorem~\ref{thm:hardnessdiff} that from a computational point of view, this problem is hard for general digraphs, but we wonder what happens when adding planarity constraints.
The aforementioned question can also be studied in terms of upper bounds as a function of the number of vertices, which is related to the famous Haj\'os conjecture on undirected Eulerian graphs, see~\cite{Lov-68}.
Another interesting undirected variant of Question~\ref{quest:decompose} is the following:

\begin{question}
\label{quest:undirected}
Given a graph $G$ with a Eulerian subgraph $H$.
What is the smallest number of cycles of $G$ such that their symmetric difference is $H$?
\end{question}

Concerning our proof of Theorem~\ref{thm:2flips}, we believe that for any bound on the size of the cuts, the corresponding flip distance will be \NP-hard to compute.
On the other hand, we use very particular graphs as gadgets, and we do not know the complexity of the corresponding problem for planar $\alpha$-orientations.
We think the following is an interesting special case:

\begin{question}
Let $X, Y$ be perfect matchings of a planar bipartite $3$-connected graph $G$.
What is the complexity of determining the distance of $X$ and $Y$ with respect to alternating cycles that are either a face or the symmetric difference of two incident faces?
\end{question}
The feeling that this problem might be tractable is supported by the following observation. It is not difficult to show that every height two poset with bipartite \emph{planar} Hasse diagram has dimension at most two.
It then follows from~\cite{SS87} that the restriction of the Jump Number Problem to such posets is solvable in polynomial time, and thus, the hardness reduction presented in the previous section fails.

\subsection*{Acknowledgements}
This work was initiated during the workshop ``Order \& Geometry'' 2018 in Ci\k{a}\.{z}e\'{n} Palace. We thank the organizers and participants of this workshop for the stimulating atmosphere.

\bibliographystyle{alpha}
\bibliography{paper}

\end{document}